\newcommand{\ul}{\underline}
\newcommand{\ee}{\textnormal{\bf e}}
\newcommand{\ii}{\textnormal{\bf i}}
\newcommand{\dd}{\textnormal{d}}
\newcommand{\diag}{\textnormal{diag}}
\newcommand{\supp}{\textnormal{supp}}
\newcommand{\underbr}[2]{\underbrace{#1}_{\displaystyle #2}}
\newtheorem{assumption}{Assumption}
\newif\ifproofinsert
\newsavebox\myproofbox
\renewenvironment{proof}{%
\begin{lrbox}{\myproofbox}\begin{minipage}{\linewidth}%
}{\end{minipage}\end{lrbox}}
\newcommand{\hn}{h} 
\newcommand{\nh}{n_h} 
\newcommand{\Spkh}{S_{p,k,h}}
\newcommand{\Spph}{S_{p,p,h}}
\newcommand{\Sph}{S_{p,h}}
\newcommand{\Sphper}{\widehat{S}_{p,h}}
\newcommand{\Sphhper}{\widehat{S}_{p,\frac{h}{2}}}
\newcommand{\upkh}{u_{p,k,h}}
\newcommand{\uph}{u_{p,h}}
\newcommand{\uphh}{u_{p,\frac{h}{2}}}
\newcommand{\uphhvec}{\ul{u}_{p,\frac{h}{2}}}
\newcommand{\uphvec}{\ul{u}_{p,h}}
\newcommand{\vph}{v_{p,h}}
\newcommand{\tiSph}{\widetilde{S}_{p,h}}
\newcommand{\Shone}{{S}_{1,h}}
\newcommand{\Shtwo}{{S}_{2,h}}
\newcommand{\Shthree}{{S}_{3,h}}
\newcommand{\Shfour}{{S}_{4,h}}
\newcommand{\tiShone}{\widetilde{S}_{1,h}}
\newcommand{\tiShtwo}{\widetilde{S}_{2,h}}
\newcommand{\tiShthree}{\widetilde{S}_{3,h}}
\newcommand{\tiShfour}{\widetilde{S}_{4,h}}
\newcommand{\vphvec}{\ul{v}_{p,h}}
\newcommand{\wph}{w_{p,h}}
\newcommand{\bspl}{\varphi_{p,h}}
\newcommand{\bsplper}{\widehat{\varphi}_{p,h}}
\newcommand{\bsplperpme}{\widehat{\varphi}_{p-1,h}}
\newcommand{\bsplti}{\widetilde{\varphi}_{p,h}}
\newcommand{\OmegaAB}{\Omega}
\newcommand{\OmegaPhys}{\hat\Omega}
\newcommand{\Ho}{H^1_{\circ}}
\definecolor{darkgreen}{rgb}{0,0.5,0}
\begin{document}

\title{Approximation error estimates and inverse inequalities for B-splines of maximum smoothness}
\titlerunning{Approximation error and inverse inequalities for splines of maximum smoothness}
\author{Stefan Takacs, Thomas Takacs}

\institute{Stefan Takacs \at
						Department of Computational Mathematics,
           Johannes Kepler University Linz, Austria\\
           \email{stefan.takacs@numa.uni-linz.ac.at}\\
           The work of this author was funded by the Austrian Science Fund (FWF): J3362-N25.
       \and
       Thomas Takacs \at
		Department of Mathematics, 
           University of Pavia, Italy\\
           \email{thomas.takacs@unipv.it}\\
	The work of this author was partially funded by the European Research Council through 
	the FP7 Ideas Consolidator Grant HIgeoM.
}

\date{\today}

\journalname{Numerische Mathematik}

\maketitle

\begin{abstract}
  In this paper, we develop approximation error estimates as well as corresponding
  inverse inequalities for B-splines of maximum smoothness, where both the function to
  be approximated and the approximation error are measured in standard Sobolev norms
  and semi-norms.
	The presented approximation error estimates do not depend on the polynomial degree
	of the splines but only on the grid size.
	
	We will see that the approximation lives in a subspace of the classical B-spline
	space. We show that for this subspace, there is an inverse inequality which is also
	independent of the polynomial degree. As the approximation error estimate and
	the inverse inequality show complementary behavior, the results shown in this
	paper can be used to construct fast iterative methods for solving problems arising
	from isogeometric discretizations of partial differential equations.
\end{abstract}

\section{Introduction}\label{sec:intro}

The objective of this paper is to prove approximation error estimates 
as well as corresponding inverse estimates for B-splines of maximum smoothness. 
The presented approximation error estimates do not depend on the degree of the splines but only 
on the grid size. All bounds are given in terms of classical Sobolev norms and semi-norms. 

In approximation theory, B-splines have been studied for a long time and many properties 
are already well known. We do not go into the details of the existing results but present 
the results of importance for our study throughout this paper. 

The emergence of Isogeometric Analysis, introduced in~\cite{Hughes:2005}, sparked new interest 
in the theoretical properties of B-splines. Since isogeometric Galerkin methods are aimed at 
solving variational formulations of differential equations, approximation properties 
measured in Sobolev norms need to be studied. 

The results presented in this paper improve the results given in
\cite{Schumaker:1981,devore:1993,Bazilevs:2006} by explicitly studying the dependence
on the polynomial degree~$p$. Such an analysis was done in~\cite{daVeiga:2011}. However,
the results there do not cover (for~$p\geq 2$) the most important case of B-splines
of maximum smoothness~$k=p-1$. It turns out that the methods established in~\cite{daVeiga:2011} 
for proving those bounds are not suitable in that case. Therefore, we develop a framework based on 
Fourier analysis to prove rigorous bounds for $k=p-1$, which has the limitation that it is 
only applicable for uniform grids. 

Unlike the aforementioned papers we only consider approximation with B-splines 
in the parameter domain within the framework of Isogeometric Analysis. 
A generalization of the results to NURBS as well as the 
introduction of a geometry mapping, as 
presented in \cite{Bazilevs:2006}, is straightforward and does not lead to any additional 
insight. 

Note that a detailed study of direct and inverse estimates may lead to a deeper 
understanding of isogeometric multigrid methods and give insight to suitable 
preconditioning methods. We refer to \cite{Garoni:2014,Donatelli:2015}, where 
similar techniques were used. 

\subsection{The main results}

We now go through the main results of this paper.
For simplicity, we consider the case of one dimension first, where $\OmegaAB = (a,b)$
with $a<b$ is the open \emph{parameter domain}.
For this domain we can introduce a \emph{uniform grid} by subdividing
$\OmegaAB$ into \emph{elements} (subintervals) of length $\hn$. The setup of a
uniform grid is only possible if
\begin{equation*}
		\nh := \hn^{-1}(b-a)\in \mathbb{N},
\end{equation*}
where $\mathbb{N}:= \{1,2,3,\ldots\}$. 
In other words, the grid size $h$ has to be chosen such that $\nh$,
the number of subintervals, is an integer. We will assume this
implicitly throughout the paper.
On these grids we can introduce spaces of spline functions.
\begin{definition}
	The space of spline functions on the domain $\OmegaAB$ of degree $p\in \mathbb{N}_0:=\{0,1,2,\ldots\}$ and
	continuity $k\in\{-1,0,1,2,\ldots\}$ over the uniform grid 
	of size $\hn$ is given by 
\begin{equation*}
	S_{p,k,h}(\OmegaAB) := \left\{ u \in H^k(\OmegaAB): \; u |_{(a+hj,a+h(j+1)]} \in \mathbb{P}^p \mbox{ for all } j=0,\ldots,\nh-1 \right\},
\end{equation*}
where $\mathbb{P}^p$ is the space of polynomials of degree $p$.
\end{definition}
Here and in what follows, $L^2(\OmegaAB)$ and $H^r(\OmegaAB)$ denote the standard Lebesque and
Sobolev spaces with norms $\|\cdot\|_{L^2(\OmegaAB)}$, $\|\cdot\|_{H^r(\OmegaAB)}$ and
semi-norms $|\cdot|_{H^r(\OmegaAB)}$.
Moreover, let $(\cdot,\cdot)_{L^2(\OmegaAB)}$ be the standard scalar product for $L^2(\OmegaAB)$ and
\begin{equation*}
	(u,v)_{H^r(\OmegaAB)} := \left(\frac{\partial^r}{\partial x^r} u,\frac{\partial^r}{\partial x^r} v\right)_{L^2(\OmegaAB)}
\end{equation*}
be the scalar product for $H^r(\OmegaAB)$, where $\frac{\partial^r}{\partial x^r}$ denotes the $r$-th derivative. We then have $|u|^2_{H^r(\OmegaAB)} := (u,u)_{H^r(\OmegaAB)}$ as well as 
\begin{equation*}
	\|u\|^2_{H^r(\OmegaAB)} := \|u\|^2_{L^2(\OmegaAB)} + \sum^r_{s=1} |u|^2_{H^s(\OmegaAB)}
\end{equation*}
for all $r\in\mathbb{N}_0:=\{0,1,2,\ldots\}$.

Using standard trace theorems, we obtain that for $k>0$ the
space $S_{p,k,h}(\OmegaAB)$ is the space of all $k-1$ times continuously differentiable
functions ($C^{k-1}(\Omega)$-functions), which are polynomials of degree $p$ on each
element of the uniform grid on $\OmegaAB$. For $k=0$, there is no continuity condition, i.e.,
the space $S_{p,0,h}(\OmegaAB)$ is the space of piecewise polynomials of degree $p$. 

For $k>p$, the spline spaces reduce to spaces  
of global polynomials. So, the largest possible choice for $k$ without having this
effect is $k=p$. Therefore we call B-splines with $k=p$ B-splines of \emph{maximum smoothness}. 
As we are mostly interested in this case, here and in what follows,
we will use $\Sph(\OmegaAB):=\Spph(\OmegaAB)$. 

The main result of this paper is the following.

\newcommand{\citeThrmApprox}{1} % Use this for section headers 
\begin{theorem}\label{thrm:approx}
    For all $u\in H^1(\OmegaAB)$, all grid sizes $h$
    and each degree $p\in\mathbb{N}$ with ${h\,p < |\OmegaAB| = b-a}$,
    there is a spline approximation $\uph\in \Sph(\OmegaAB)$ such that
    \begin{equation}\label{eq:thrm:approx}
			    	\|u-\uph\|_{L^2(\OmegaAB)} \le \sqrt{2}\; \hn |u|_{H^1(\OmegaAB)}
    \end{equation}
    is satisfied.
\end{theorem}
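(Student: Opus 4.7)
The plan is to reduce to a periodic setting on a reflected, doubled domain and then apply Fourier analysis, which is the natural tool for maximum-smoothness spline spaces since they are shift-invariant and spanned by translates of a single B-spline. Set $\tilde\Omega := (a, 2b-a)$ and extend $u \in H^1(\OmegaAB)$ to $\tilde u \in H^1(\tilde\Omega)$ by even reflection about $x=b$. Regarded as a function of period $2(b-a)$, $\tilde u$ is periodic, and one checks
\begin{equation*}
\|\tilde u\|^2_{L^2(\tilde\Omega)} = 2\,\|u\|^2_{L^2(\OmegaAB)}, \qquad |\tilde u|^2_{H^1(\tilde\Omega)} = 2\,|u|^2_{H^1(\OmegaAB)}.
\end{equation*}

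On the doubled periodic domain I would introduce the periodic spline space $\widehat{S}_{p,h}(\tilde\Omega)$ of maximum smoothness, a principal shift-invariant space generated by a single periodic cardinal B-spline $\widehat\varphi_{p,h}$ with explicit Fourier coefficients (powers of a sinc function). Take $\tilde u_{p,h}$ to be an appropriate projection of $\tilde u$ onto this space. Parseval's identity splits the error into a sum over Fourier modes and reduces the problem to a mode-by-mode inequality involving the Fourier symbol of $\widehat\varphi_{p,h}$. The key intermediate estimate to establish is the $p$-independent periodic bound
\begin{equation*}
\|\tilde u - \tilde u_{p,h}\|_{L^2(\tilde\Omega)} \le \sqrt{2}\, h\, |u|_{H^1(\OmegaAB)}.
\end{equation*}
Combined with the reflection identities above, this yields the target estimate provided the restriction $u_{p,h} := \tilde u_{p,h}|_{\OmegaAB}$ belongs to $\Sph(\OmegaAB)$.

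To secure that last point I would arrange that $\tilde u_{p,h}$ is itself even about $b$. This is automatic as soon as the periodic spline space and the chosen projection commute with the reflection $x \mapsto 2b-x$, which in turn requires $b$ to be a node of the uniform grid on $\tilde\Omega$; this is guaranteed by the standing assumption $\nh \in \mathbb{N}$. Evenness about $b$ forces the matching of one-sided derivatives at $b$ of all orders up to $p-1$, so the restriction is a $C^{p-1}$ piecewise polynomial on $\OmegaAB$, i.e., an element of $\Sph(\OmegaAB)$.

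The main obstacle is the $p$-independence of the constant in the periodic Fourier estimate. Standard Bramble--Hilbert / Deny--Lions arguments on a single element give a constant that degrades with $p$, which is exactly the gap left open by the approach of \cite{daVeiga:2011}. The Fourier-analytic route sidesteps this by treating the periodic space of maximum smoothness globally; the estimate reduces to bounding the quotient
\begin{equation*}
1 - \frac{|\widehat\varphi_{p,h}(\xi)|^2}{\sum_{k \in \mathbb{Z}} |\widehat\varphi_{p,h}(\xi + 2\pi k/h)|^2}
\end{equation*}
uniformly in $p$ by a multiple of $(h\xi)^2$. The hypothesis $\hn\,p < b-a$ is expected to enter precisely here, ensuring that the support of the cardinal B-spline fits strictly inside one period of $\tilde\Omega$, so that the shift-invariant-space formalism on $\mathbb{R}$ transfers to the periodic setting without wrap-around corrections.
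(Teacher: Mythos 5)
Your overall reduction --- even reflection to a doubled periodic domain, symmetry of the approximant so that the restriction to $\OmegaAB$ inherits the estimate after the factors of $\sqrt2$ cancel, and Fourier analysis of the uniform periodic spline space of maximum smoothness --- matches the paper's strategy in outline, and your observation that $hp<b-a$ is what makes the periodic B-spline construction well defined is correct. But the proposal stops exactly where the content of the theorem begins. The quotient you write down,
\begin{equation*}
1-\frac{|\widehat{\varphi}_{p,h}(\xi)|^{2}}{\sum_{k\in\mathbb Z}|\widehat{\varphi}_{p,h}(\xi+2\pi k/h)|^{2}},
\end{equation*}
has as denominator precisely the symbol $\widehat{m}_{p,h}$ of the mass matrix (the Euler--Frobenius symbol), for which no closed form is available; proving that this quotient is bounded by a fixed multiple of $(h\xi)^{2}$ \emph{uniformly in $p$} is the entire difficulty, and you only assert that it is ``expected'' to hold. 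The paper devotes most of Sections~3--4 to supplying exactly this missing piece: the representation of the mass symbol via Eulerian numbers, its positivity and monotonicity (Lemma~\ref{lem:mass:estim}, proved through a Chebyshev-polynomial recurrence with nonnegative coefficients), and the comparison of mass norms across degrees (Lemma~\ref{lem:mass}). Without an argument of this type, what you have is a plan, not a proof.

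There is also a structural issue in the Parseval step. The projection onto the periodic spline space does not act mode by mode on the continuum Fourier frequencies; it only block-diagonalizes over aliasing classes $\{\xi+2\pi k/h\}_{k\in\mathbb Z}$, so the error at a given reduced frequency involves the whole class, not the single-mode quotient above. You would either need the full shift-invariant-space error kernel and a separate bound on the aliasing tail (using $|\xi+2\pi k/h|\ge \pi/h$ for $k\neq0$), or you can do what the paper does: first approximate $u$ by a spline on a very fine grid with a constant that may depend on $p$ (Lemma~\ref{lem:non:robust} plus Aubin--Nitsche), then telescope over dyadic grid refinements using the two-grid estimate~\eqref{eq:whattolfa} (Lemma~\ref{lem:lfa}), in which the aliasing class collapses to exactly two frequencies and the resulting $2\times2$ blocks can be analyzed explicitly. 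As a minor point of bookkeeping, your intermediate periodic bound should carry $|\tilde u|_{H^1(\tilde\Omega)}$ rather than $|u|_{H^1(\OmegaAB)}$ on the right-hand side for the constants to come out as $\sqrt2$ after restriction.
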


Note that, in contrast to the existing results presented in the next subsection, this theorem achieves
two goals, it covers the case of maximum smoothness and gives a uniform estimate for all polynomial degrees~$p$.

\begin{remark}
		Obviously $\Spkh(\OmegaAB) \supseteq \Sph(\OmegaAB)$ for all $0\le k < p$. So,
		Theorem~\ref{thrm:approx} is also valid in that case. However, for this case there might
		be better estimates for these larger B-spline spaces.
		Moreover, Theorem~\ref{thrm:approx} is also satisfied in
		the case of having repeated knots, as this is just a local
		reduction of the continuity (which enlarges the corresponding space of spline
		functions).
\end{remark}

In Section~\ref{sec:reduced}, we will introduce a
subspace $\tiSph(\OmegaAB) \subseteq \Sph(\OmegaAB)$ (cf. Definition~\ref{defi:Ssymm}) and
show that the spline approximation
is even in that subspace (cf. Corollary~\ref{cor:approx:nonper}).
Moreover, we show also a corresponding \emph{inverse inequality} for $\tiSph(\OmegaAB)$ (cf. Theorem~\ref{thrm:inverse}
in Section~\ref{sec:inverse}),
i.e., we will show that
\begin{equation*}
	|\uph|_{H^1(\OmegaAB)} \le 2 \sqrt{3} \hn^{-1} \|\uph\|_{L^2(\OmegaAB)}
\end{equation*}
is satisfied for all grid sizes $h$, each $p\in \mathbb{N}$ and all $\uph\in \tiSph(\OmegaAB)$.

We will moreover show that both the approximation error estimate and the
inverse inequality are \emph{sharp up to constants}
(Corollaries~\ref{corr:sharp1} and~\ref{corr:sharp2}).

\begin{remark}\label{rem:counterexample}
	This inverse inequality does not extend to the whole space $\Sph(\OmegaAB)$.
	Here it is easy to find a counterexample: Let $\OmegaAB = (0,1)$. 
	The function $\uph$, given by
	\begin{equation*}
		\uph(x) = \left\{
			\begin{array}{ll}
				(1-x/\hn)^p  & \mbox{\qquad for $x \in [0,\hn)$}\\
				0 & \mbox{\qquad for $x\in [\hn,1]$},
			\end{array}
			\right.
	\end{equation*}
	is a member of the space $\Sph(0,1)$. Straight-forward computations yield
	\begin{equation*}
        \frac{|\uph|_{H^1(0,1)}}{\|\uph\|_{L^2(0,1)}} = \sqrt{\frac{2p+1}{2p-1}} \;p \;\hn^{-1},
	\end{equation*}
	which cannot be bounded from above by a constant times $\hn^{-1}$ uniformly in $p$.
	Using a standard scaling argument, this counterexample can be extended to any
	$\OmegaAB=(a,b)$.
\end{remark}

The approximation error estimate and the inverse inequality are extended to
higher Sobolev indices in Section~\ref{sec:sobolev}. Corresponding results for two
and more dimensions are given in Section~\ref{sec:dim}. There, also the extension
to Isogeometric Analysis is discussed.

\subsection{Known approximation error estimates}

Before proving the main theorems, we start with recalling two important pre-existing
approximation error estimates.
The first result is well-known in literature, cf.~\cite{Schumaker:1981}, Theorem~6.25 or
\cite{devore:1993}, Theorem~7.3. In the framework of Isogeometric Analysis,
such results have been used, e.g., in \cite{Bazilevs:2006}, Lemma~3.3.
\begin{theorem}\label{thrm:known}
		For each $r\in\mathbb{N}_0$, each $k\in\mathbb{N}$, each $q\in\mathbb{N}$ and each
		$p\in\mathbb{N}$, with $0\le r\le q\le p+1$ and $r\le k \le p$,
		there is a constant $C(p,k,r,q)$ such that the following approximation error estimate
		holds.
		For all $u\in H^q(\OmegaAB)$ and all grid sizes $h$, there is a spline approximation
		$\upkh \in \Spkh(\OmegaAB)$ such that
    \begin{equation*}
			    	|u-\upkh|_{H^r(\OmegaAB)} \le C(p,k,r,q) \hn^{q-r} |u|_{H^q(\OmegaAB)}
    \end{equation*}
    is satisfied.
\end{theorem}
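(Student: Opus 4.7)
The plan is to prove the classical statement by constructing an explicit local quasi-interpolation operator and invoking the Bramble-Hilbert lemma element by element. Concretely, I will build a linear operator $Q_{p,k,h}\colon L^2(\OmegaAB)\to \Spkh(\OmegaAB)$ that is (i) locally defined, (ii) locally $L^2$-stable with a constant depending only on $p$ and $k$, and (iii) reproduces all polynomials of degree $p$; the desired approximation $\upkh$ will simply be $Q_{p,k,h}u$.

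First I would fix the standard B-spline basis $\{\varphi_i\}$ of $\Spkh(\OmegaAB)$; recall each $\varphi_i$ is nonnegative, forms a partition of unity after an appropriate normalization and is supported on at most $p-k+1$ knot intervals, which in the maximum-smoothness setting is $\mathcal{O}(p)$ elements. Following de Boor-Fix or Schumaker, one can associate to each $\varphi_i$ a dual linear functional $\lambda_i$, depending only on values of $u$ on a patch $\omega_i$ of $\mathcal{O}(p)$ adjacent elements, satisfying $\lambda_i(\varphi_j)=\delta_{ij}$ on $\mathbb{P}^p$; setting
\begin{equation*}
Q_{p,k,h}u := \sum_i \lambda_i(u)\,\varphi_i
\end{equation*}
then yields an operator that is exact on $\mathbb{P}^p$. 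A scaling argument to a reference patch of size $1$ shows that $\|\lambda_i\|_{L^2(\omega_i)^*}\le C(p,k)\,h^{-1/2}$, and combined with the bound $\|\varphi_i\|_{L^2(\OmegaAB)}\le C(p)\,h^{1/2}$ and inverse estimates on polynomials this gives the local stability
\begin{equation*}
|Q_{p,k,h}u|_{H^r(\tau)} \le C(p,k,r)\,h^{-r}\,\|u\|_{L^2(\omega_\tau)}
\end{equation*}
for every element $\tau$, where $\omega_\tau$ is the union of the patches $\omega_i$ of those basis functions not vanishing on $\tau$.

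Second, I would exploit the polynomial reproduction property. Since $q\le p+1$, the space $\mathbb{P}^{q-1}$ is reproduced by $Q_{p,k,h}$, so for any $\pi\in\mathbb{P}^{q-1}$ and any element $\tau$ we have
\begin{equation*}
|u-Q_{p,k,h}u|_{H^r(\tau)} \le |u-\pi|_{H^r(\tau)} + |Q_{p,k,h}(u-\pi)|_{H^r(\tau)} \le \bigl(1+C(p,k,r)\,h^{-r}\bigr)\|u-\pi\|_{H^r(\omega_\tau)}.
\end{equation*}
Taking the infimum over $\pi\in\mathbb{P}^{q-1}$ and applying the Bramble-Hilbert lemma on the scaled patch $\omega_\tau$ (whose diameter is $\mathcal{O}(p\,h)$) yields $|u-Q_{p,k,h}u|_{H^r(\tau)} \le C(p,k,r,q)\,h^{q-r}\,|u|_{H^q(\omega_\tau)}$. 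Squaring, summing over all elements $\tau$, and using the bounded overlap of the patches (each element lies in at most $\mathcal{O}(p)$ patches, a factor that can be absorbed into $C(p,k,r,q)$) gives the global bound claimed in the theorem. The condition $r\le k$ ensures that $|Q_{p,k,h}u|_{H^r(\OmegaAB)}$ is well-defined without stray jump contributions at interior knots.

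The main obstacle, and the reason this route does not suffice for the $p$-robust bound of Theorem~\ref{thrm:approx}, is controlling how the constants in the dual functionals $\lambda_i$ and in the Bramble-Hilbert lemma on patches of diameter $\mathcal{O}(p\,h)$ depend on $p$. For the present theorem this is harmless because the constant $C(p,k,r,q)$ is allowed to depend on $p,k,r,q$; the classical de Boor-Fix construction and a careful scaling provide explicit, though $p$-dependent, bounds, and the remainder of the argument is entirely local and standard.
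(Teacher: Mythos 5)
Your argument is correct and is essentially the standard proof: the paper does not prove Theorem~\ref{thrm:known} itself but cites it (Schumaker, Theorem~6.25; DeVore--Lorentz, Theorem~7.3), and those sources --- as well as the paper's own proof of Lemma~\ref{lem:non:robust}, which reuses Schumaker's local projector $Q_{p,h}$ from (6.40) and the local estimate of Theorem~6.24 --- follow exactly your route of locally supported dual functionals, polynomial reproduction up to degree $p$, local $L^2$-stability, and a Bramble--Hilbert/Whitney estimate on overlapping patches summed over elements. (One minor slip that does not affect the argument, since all constants may depend on $p$ and $k$: a degree-$p$, $C^{k-1}$ B-spline is supported on $p+1$ knot spans, hence on roughly $(p+1)/(p-k+1)$ distinct elements, not on $p-k+1$ of them.)
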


This theorem is valid for tensor-product spaces in any dimension and 
gives a local bound for locally quasi-uniform knot vectors. However, the dependence of the constant on the 
polynomial degree has not been derived.

A major step towards estimates with explicit $p$-dependence was presented in \cite{daVeiga:2011}, 
Theorem~2, where an estimate with an explicit dependence on $p$, $k$, $r$ and $q$ was given. However,
there the continuity $k$ is limited by the upper bound $\tfrac12(p+1)$. In our notation, the theorem reads
as follows.
\begin{theorem}\label{thrm:known:2}
		There is a constant $C>0$ such that for each $r\in\mathbb{N}_0$, each $k\in\mathbb{N}$, each
		$q\in\mathbb{N}$ and each $p\in\mathbb{N}$ with $0\le r\le k\le q\le p+1$ 
		and $k \le \tfrac12(p+1)$ and all grid sizes $h$, the following approximation error estimate
		holds. For all $u\in H^q(\OmegaAB)$, there is a spline
		approximation $\upkh\in \Spkh(\OmegaAB)$ such that
    \begin{equation}\nonumber
			    	|u-\upkh|_{H^r(\OmegaAB)} \le C \hn^{q-r} (p-k+1)^{-(q-r)} |u|_{H^q(\OmegaAB)}
    \end{equation}
    is satisfied.
\end{theorem}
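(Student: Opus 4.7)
The plan is to construct a bounded, polynomial-preserving quasi-interpolant $\Pi_{p,k,h}\colon H^q(\OmegaAB) \to \Spkh(\OmegaAB)$ and to combine a $p$-explicit Jackson-type estimate on each element with a gluing argument that enforces the required $C^{k-1}$ continuity. The assumption $k \le \tfrac12(p+1)$ enters only in the gluing step, via the equivalence $p+1 \le 2(p-k+1)$ and a dimension count of ``free'' polynomial degrees of freedom per element.

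On each element $K_j$ of length $h$, the classical Jackson--Nikolski\u{\i} estimate for polynomials of degree $p$ gives
\begin{equation*}
    \inf_{v \in \mathbb{P}^p(K_j)} |u-v|_{H^r(K_j)} \le C \left(\tfrac{h}{p+1}\right)^{q-r} |u|_{H^q(K_j)}
\end{equation*}
for any $0 \le r \le q \le p+1$. Under the hypothesis $k \le \tfrac12(p+1)$, one has $p-k+1 \ge (p+1)/2$, so the factor $(p+1)^{-(q-r)}$ on the right is equivalent, up to a universal constant, to $(p-k+1)^{-(q-r)}$. This already yields the correct \emph{local} error, but the piecewise best approximation $(v_j)_j$ does not in general belong to $\Spkh(\OmegaAB)$.

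To land in $\Spkh(\OmegaAB)$, I add a correction supported in a neighborhood of each interior knot, that cancels the jumps of derivatives of orders $0,\ldots,k-1$ of $(v_j)_j$. Such a correction is constructed from a dual basis of $\Spkh(\OmegaAB)$ obtained, for example, via de Boor--Fix functionals, and the resulting operator $\Pi_{p,k,h}$ is $L^2$-bounded and reproduces all of $\mathbb{P}^{q-1}$ since $q-1 \le p$. Bounding the correction in $H^r$ requires a $p$-uniform stability estimate for the local continuity-enforcement operator: it is here that $k \le \tfrac12(p+1)$ is essential, since it guarantees enough ``free'' polynomial degrees of freedom per element (dimension $p+1-2k \ge 0$) to realize the correction with constants independent of $p$ and $k$. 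Summing over elements and a standard finite-overlap argument then yield the claimed global bound.

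The main obstacle is this $p$-uniform stability of the continuity correction. When $k$ approaches $p$, the continuity constraints become so numerous relative to the local polynomial degrees of freedom that the correction operator's norm grows with $p$; equivalently, a Markov--Bernstein-type inverse inequality on a single element degenerates. The bound $k \le \tfrac12(p+1)$ provides exactly the slack needed to prevent this. The breakdown for $k$ near $p$---in particular $k=p-1$ and the maximum-smoothness case $k=p$---is precisely what motivates the Fourier-based analysis developed in the remainder of the present paper.
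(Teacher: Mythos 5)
First, be aware that the paper does not prove this statement at all: Theorem~\ref{thrm:known:2} is quoted (up to notation) from Theorem~2 of \cite{daVeiga:2011} and serves only as background, to motivate why the case $k=p$ of maximum smoothness needs the different, Fourier-based machinery developed later. So there is no in-paper proof to compare yours against, and your attempt has to stand on its own.

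On its own merits, the proposal has a genuine gap exactly at the step you yourself flag as ``the main obstacle''. The element-local Jackson bound and the observation that $(p+1)^{-(q-r)}\le(p-k+1)^{-(q-r)}$ are fine (the reverse inequality, which a genuine ``equivalence up to a universal constant'' would require, costs a factor $2^{q-r}$ that is \emph{not} universal since $q-r$ may be as large as $p+1$; but only the trivial direction is needed, so this is harmless). The problem is the gluing. The dimension count $p+1-2k\ge 0$ guarantees only the \emph{existence} of a local correction cancelling the jumps of the derivatives of orders $0,\dots,k-1$; it says nothing about its size. To close the argument you would need a quantitative trace-lifting statement: given prescribed values of the first $k$ derivatives at an endpoint of an element of length $h$, there is a polynomial of degree $p$ attaining them whose $H^r$-norm is bounded, \emph{uniformly in $p$}, by the naturally scaled data. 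This is a Markov/Bernstein-type estimate whose naive constants degrade like $p^{2}$ per derivative order, and nothing in the proposal rules this out; the inequality $k\le\tfrac12(p+1)$ by itself does not repair it. In other words, the entire analytic content of the theorem is concentrated in the stability claim you assert but do not prove. For comparison, the argument in \cite{daVeiga:2011} sidesteps the gluing problem altogether: one approximates $\partial^k u/\partial x^k$ by a \emph{discontinuous} piecewise polynomial of degree $p-k$ using $p$-explicit Legendre-type estimates --- this is where the factor $(p-k+1)^{-(q-r)}$ genuinely originates, as the effective degree after differentiation, rather than as a cosmetic rewriting of $(p+1)^{-(q-r)}$ --- and then integrates $k$ times; $k$-fold integration of a piecewise polynomial of degree $p-k$ automatically produces a $C^{k-1}$ spline of degree $p$, i.e.\ an element of $\Spkh(\OmegaAB)$, so no jump correction is ever needed. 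The restriction $k\le\tfrac12(p+1)$ enters that construction through the reduced degree $p-k$ and the integration step, not through a per-element count of free degrees of freedom.
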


Again, the original result was stated for locally quasi-uniform knots. 
For any $p\geq2$ the relevant case $k=p$, which we consider, is not covered by this theorem.

Similar results to Theorem~\ref{thrm:approx} are known in approximation theory, cf.~\cite{Korneichuk:1991}.
There, however, different norms have been discussed. Hence we do not go into the details.
In~\cite{Evans:2009}, it was suggested and confirmed by numerical experiments that
Theorem~\ref{thrm:approx} is satisfied. A proof was however not given.

\subsection{Organization of this paper}

This paper is organized as follows. 
In Section~\ref{sec:prelim}, we present
the main steps of the proof of Theorem~\ref{thrm:approx} and give some preliminaries.
In the following two sections, the details of the proof are worked out.
In Section~\ref{sec:reduced}, we introduce the reduced spline space $\tiSph(\OmegaAB)$,
discuss its properties and extend Theorem~\ref{thrm:approx} to that space.
In the following section, Section~\ref{sec:inverse}, we give an inverse inequality
for $\tiSph(\OmegaAB)$. In the remainder of the paper, we generalize
those results: In Section~\ref{sec:sobolev} we consider higher
Sobolev indices and in Section~\ref{sec:dim}, the results are generalized
to two or more dimensions.

\section{Concept of the proof of Theorem~\citeThrmApprox{} and Preliminaries}\label{sec:prelim}

The proof of Theorem~\ref{thrm:approx} is based on an estimate for periodic splines, which is
formulated as Lemma~\ref{lem:approx:per}. The proof of Lemma~\ref{lem:approx:per} is
based on a telescoping argument based on a hierarchy of grids. For the proof, we require
\begin{itemize}
		\item an estimate for the difference of the spline approximations of a given function on two consecutive grids,
				cf.~\eqref{eq:whattolfa}, and
		\item an estimate for the difference between the spline approximation on
				some finest grid and the given function, cf.~Lemma~\ref{lem:non:robust}.
\end{itemize}
As the size of the finest grid approaches $0$, the constant in Lemma~\ref{lem:non:robust}
or its dependence on the spline degree $p$ does not matter, whereas the constant
in~\eqref{eq:whattolfa} directly affects the constant in the final result.

The estimate~\eqref{eq:whattolfa} is shown in Section~\ref{sec:twogrid}, cf. Lemma~\ref{lem:lfa}. There, the proof
is done by means of Fourier analysis, which causes the restriction of the analysis to equidistant
grids. The Fourier analysis follows a classical line: first, a matrix-vector formulation is
introduced, cf. Lemma~\ref{lemma:decomp}, then the symbols of the involved matrices
are derived, cf. Subsections~\ref{subsec:symbols} and~\ref{subsec:symbols2}. A closed
form for the symbol of the mass matrix is not available, so some statements on that
matrix are derived (Lemmas~\ref{lem:mass} and~\ref{lem:mass:estim}), which are used
in the proof of Lemma~\ref{lem:lfa}.

Having the result for two consecutive grids in the periodic case, we use the aforementioned 
telescoping argument to give an approximation error estimate for apprximating a general
periodic $H^1$-function. The extension to the non-periodic case is done by means of a
periodic extension.

\subsection{Periodic splines}

To establish the theory within this paper, we need to introduce spaces of periodic splines,
which we define as follows.
\begin{definition}\label{defi:Speriodic}
	Given a spline space $\Sph(\OmegaAB)$ over $\OmegaAB=(a,b)$, the \emph{periodic spline space} 
	$\Sphper(\OmegaAB)$ contains all functions $\uph\in \Sph(\OmegaAB)$ that satisfy the linear periodicity condition
	\begin{equation}\label{eq:sym:cond}
							\frac{\partial^{l}}{\partial x^{l}} \uph(a)=\frac{\partial^{l}}{\partial x^{l}} \uph(b)
							\mbox{ for all } l \in \mathbb{N}_0 \mbox{ with } l < p.
	\end{equation}
\end{definition}

The next step is to introduce a B-spline-like basis for this space. First, we introduce the
cardinal B-splines. On $\mathbb{R}$, the cardinal B-splines are defined
as follows, cf.~\cite{Schumaker:1981}, (4.22).
\begin{definition}
	 The cardinal B-splines of degree $p=0$, $\psi^{(i)}_{0}: \;\mathbb{R}\rightarrow\mathbb{R}$ coincide
	 with the characteristic function, i.e.,
  \begin{equation*}
      \psi^{(i)}_{0}(x) := \left\{
	  \begin{array}{ll}
	    1 & \mbox{ for } x \in ( i , i+1 ],\\
	    0 & \mbox{ else,}
	  \end{array}
      \right.
  \end{equation*}
  where $i \in \mathbb{Z}$. 

  The cardinal B-splines $\psi^{(i)}_{p}: \;\mathbb{R}\rightarrow\mathbb{R}$ of degree $p\in\mathbb{N}$ are
  given by the recurrence formula
  \begin{equation}\label{eq:recur:bspline}
      \psi^{(i)}_{p}(x) := \frac{x-i}{p} \psi^{(i)}_{p-1}(x) + \frac{(p+i+1)-x}{p} \psi^{(i+1)}_{p-1}(x),
  \end{equation}
  where $i \in\mathbb{Z}$.
\end{definition}
From the cardinal B-splines $\psi^{(i)}_{p}$, we derive the B-splines $\bspl^{(i)}$ on $\OmegaAB$ over 
a uniform grid of size $\hn$ by a suitable scaling and shifting. 
\begin{definition}
	 For $i\in\mathbb{Z}$ the uniform B-spline $\bspl^{(i)}: \;\OmegaAB= (a,b)\rightarrow\mathbb{R}$ of degree $p\in\mathbb{N}_0$ and 
	 grid size $\hn$ is given by
  \begin{equation}
      \bspl^{(i)}(x) := \psi^{(i)}_{p}\left(\frac{x-a}{h}\right).
  \end{equation}
\end{definition}

We obtain by construction that $\mbox{supp}(\bspl^{(i)}) \subset [i \hn+a, (i+p+1) \hn+a ]$. Hence, $-p$ and $\nh-1$ with $\nh = \hn^{-1}(b-a)$ 
are the first and last indices of the B-splines supported in $\OmegaAB$, respectively, 
i.e. $\supp (\bspl^{(i)}) \cap \OmegaAB \neq \emptyset$ is equivalent to $-p \leq i \leq \nh-1$.
Moreover, $\{\bspl^{(i)}\}^{\nh-1}_{i=-p}$ forms a basis for $\Sph$, see, e.g., \cite{Schumaker:1981}. 
Note that both $\nh$ and the basis functions depend implicitly on the choice of $\OmegaAB$, i.e., the
values $a$ and $b$. Throughout the paper, it is clear from the context which
$\OmegaAB$ is chosen.

For the construction of the basis for the periodic spline space~$\Sphper(\OmegaAB)$, we assume that
\begin{equation}\label{eq:condition-grid-size}
		hp < |\OmegaAB| = b-a,
\end{equation}
i.e., that the grid is fine enough not to have basis functions that are non-zero at both end points
of the grid, cf.~\cite{Schumaker:1981}. 
\begin{definition}\label{defi:basis-per}
For $\Sphper(\OmegaAB)$, the \emph{B-spline-like basis} $\{\bsplper^{(i)}\}^{\nh-1}_{i=0}$ is given by 
\begin{align*}
		& \bsplper^{(i)}:= \bspl^{(i)}&&\mbox{ if } i<\nh-p, \mbox{ and} \\
		& \bsplper^{(i)}:= \bspl^{(i)}+\bspl^{(i-\nh)}&&\mbox{ if } i \geq \nh-p.
\end{align*}
\end{definition}
Up to indexing, this definition coincides with~(8.6) and (8.7) in~\cite{Schumaker:1981}. 
Theorem~8.2 in~\cite{Schumaker:1981} states that~\eqref{eq:basis:varphi} is actually a basis.

As $\bspl^{(i)}$ vanishes on $\OmegaAB$ for all $i\not\in \{-p,\ldots,\nh-1\}$, we have
\begin{equation}\label{eq:basis:varphi}
		\bsplper^{(i)}=\sum_{j\in\mathbb{Z}} \bspl^{(i+j \nh)},
\end{equation}
where $\mathbb{Z}$ is the set of integers, for all $i=0,\ldots, \nh-1$. Using this definition,
we directly obtain that also $\bsplper^{(i)} = \bsplper^{(i+j\nh)}$ for any $j\in \mathbb{Z}$,
which we will use for ease of notation throughout this paper.

We call this basis B-spline-like, as each function is a non-negative linear combination of B-splines and it forms
a partition of unity on $\OmegaAB$.

\subsection{A non-robust approximation error estimate in the periodic case}

We can extend Theorem~\ref{thrm:known} for $k=p-1$ to the following Lemma \ref{lem:non:robust} stating
that the approximation error estimate is still satisfied if we approximate
periodic functions with periodic splines. First, we introduce the spaces
of periodic functions as follows.

\begin{definition}
		For $\OmegaAB=(a,b)$, the space $\widehat{H}^q(\OmegaAB)$ is the space of all
		$u\in H^q(\OmegaAB)$ that satisfy the periodicity condition
		\begin{equation}\label{eq:pc}
				\frac{\partial^{l}}{\partial x^{l}} u(a)=\frac{\partial^{l}}{\partial x^{l}} u(b)
								\mbox{ for all } l \in \mathbb{N}_0 \mbox{ with } l < q.
		\end{equation}
\end{definition}
Note that standard trace theorems guarantee that
the periodicity condition~\eqref{eq:pc} is well-defined.
For this space, the following lemma holds.
\begin{lemma}\label{lem:non:robust}
		For each $r\in\mathbb{N}_0$, each $q\in\mathbb{N}$ and each
		$p\in\mathbb{N}$ with $0\le r\le q\le p+1$, there is a constant $C(p,r,q)$
		such that the following approximation error estimate holds.
		For all $u \in \widehat{H}^{q}(\OmegaAB)$ and all grid sizes $h$, there is
		a spline approximation $\uph \in \Sphper(\OmegaAB)$ such that
    \begin{equation*}
			    	|u-\uph |_{H^r(\OmegaAB)} \le C(p,r,q) \hn^{q-r} |u|_{H^q(\OmegaAB)}
    \end{equation*}
    is satisfied.
\end{lemma}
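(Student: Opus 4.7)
My plan is to reduce the statement to the non-periodic estimate Theorem~\ref{thrm:known} by means of a periodic extension. First, I would extend $u\in\widehat{H}^q(\OmegaAB)$ periodically to a function $\bar u:\mathbb{R}\to\mathbb{R}$ of period $b-a$. The periodicity conditions~\eqref{eq:pc}, combined with standard trace results, guarantee that $\bar u\in H^q_{\mathrm{loc}}(\mathbb{R})$, and on every interval $J$ of length $b-a$ one has $|\bar u|_{H^s(J)}=|u|_{H^s(\OmegaAB)}$ for $0\le s\le q$.

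Next, I would invoke a local, shift-equivariant quasi-interpolation operator $Q$ onto the cardinal maximum-smoothness spline space $S_{p,p,h}(\mathbb{R})$, of the sort underlying the classical proofs of Theorem~\ref{thrm:known} (cf.~\cite{Schumaker:1981,devore:1993}). Three properties are needed: \emph{locality}, so that $(Qv)|_{\OmegaAB}$ depends only on $v|_{\tilde\Omega}$ for a bounded enlargement $\tilde\Omega\supset\OmegaAB$; \emph{shift-equivariance} under every translation that maps the uniform grid to itself, in particular translation by $b-a$ (which preserves the grid thanks to $\nh\in\mathbb{N}$); and the \emph{local error estimate}
\begin{equation*}
    |v-Qv|_{H^r(\OmegaAB)}\le C(p,r,q)\,\hn^{q-r}\,|v|_{H^q(\tilde\Omega)}
\end{equation*}
for the admissible range of $r$, which is just Theorem~\ref{thrm:known} applied on $\tilde\Omega$.

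I would then set $\uph:=(Q\bar u)|_{\OmegaAB}$. Shift-equivariance together with the $(b-a)$-periodicity of $\bar u$ forces $Q\bar u$ to be $(b-a)$-periodic on $\mathbb{R}$. Since $Q\bar u\in S_{p,p,h}(\mathbb{R})\subset C^{p-1}(\mathbb{R})$, the matching of derivatives of orders $0,\ldots,p-1$ at $a$ and $b$ is automatic, so $\uph\in\Sphper(\OmegaAB)$. Combining the local error estimate for $Q$ with the periodic identity $|\bar u|_{H^q(\tilde\Omega)}\le C|u|_{H^q(\OmegaAB)}$ then yields the claimed bound. The borderline case $r=q=p+1$, for which the approximation may fail to lie in $H^r$, can be handled trivially by choosing $\uph=0$.

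The main (though essentially routine) obstacle is to verify the existence of a quasi-interpolant with the three properties above. Both classical proofs construct one whose B-spline coefficients are computed from local linear functionals of $v$ (such as local polynomial projections or de Boor--Fix dual functionals), and on a uniform grid these functionals commute with any integer-multiple-of-$h$ translation, yielding shift-equivariance automatically. Since the lemma allows the constant $C(p,r,q)$ to depend arbitrarily on $p$, no tracking of $p$-dependence is required; the robust dependence will instead be established later, via the Fourier analysis outlined in Section~\ref{sec:prelim}.
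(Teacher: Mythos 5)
Your proposal is correct and follows essentially the same route as the paper: extend $u$ periodically to $\mathbb{R}$, apply the local quasi-interpolant underlying the classical proof of Theorem~\ref{thrm:known} (the paper uses Schumaker's operator $Q_{p,h}$ from (6.40) together with Theorem~6.24 there), conclude periodicity of the approximant from locality/translation invariance on the uniform grid, and sum the local error bounds, absorbing the resulting $p$-dependent factor into $C(p,r,q)$. The only cosmetic differences are that you make the shift-equivariance explicit and handle the borderline case $r=q=p+1$ separately, neither of which changes the argument.
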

\begin{proof}
		In the following, we assume without loss of generality that $\OmegaAB=(0,1)$. The extension
		to any other $\OmegaAB=(a,b)$, follows using a standard scaling argument.
		
		Let $w$ be the periodic extension of the function $u$ to $\mathbb{R}$, i.e., $w(x):=u(x-\lfloor x \rfloor)$.
		Note that the restriction of $w$ to any finite interval is again a function in the Sobolev space~$H^q$.
		The following of the proof is based on the proof in \S~6.4 in~\cite{Schumaker:1981}.
		We make use of the fact that the proof uses local projections. Let $Q_{p,h}: H^q(\mathbb{R})
		\rightarrow \Sph(\mathbb{R})$ be the projection operator, as introduced in (6.40) in~\cite{Schumaker:1981}.
		The value of the approximation $Q_{p,h}w$ of a function $w$ at a certain subinterval
		$I_i:=(i\;h,(i+1)\;h)\subseteq \OmegaAB$ only depends on the values of the function to be approximated in
		a certain neighborhood $\widetilde{I}_i:=((i-p)\;h,(i+p+1)\;h)$. So, from the periodicity of $w$, the
		periodicity of $Q_{p,h}w$ follows immediately. Hence its restriction to $(0,1)$ is a periodic spline,
		i.e. $Q_{p,h}w|_{(0,1)}\in\Sphper(0,1)$. We define $\uph$ to be the restriction of $Q_{p,h}w$ to $(0,1$).
		Due to \cite{Schumaker:1981}, Theorem~6.24, the local estimate
		\begin{equation}\nonumber%\label{eq:local:error}
				|w-Q_{p,h}w|_{H^r(I_i)} \le  \widetilde{C}(p,r,q) \hn^{q-r} |w|_{H^q(\widetilde{I}_i)}.
		\end{equation}
		is satisfied for the projector $Q_{p,h}$ and
		a constant $\widetilde{C}(p,r,q)$, which is independent of~$\hn$. By summing
		over all elements, we obtain 
		\begin{align*}
				&|u-\uph |_{H^r(0,1)}^2  
					 =	|w-Q_{p,h}w |_{H^r(0,1)}^2
					  =	\sum_{i=0}^{\nh-1} |w-Q_{p,h}w|_{H^r(I_i)}^2 \\
				  & \quad \le  \widetilde{C}^2(p,r,q) \hn^{2(q-r)} \sum_{i=0}^{\nh-1} |w|_{H^q(\widetilde{I}_i)}^2
					  =\widetilde{C}^2(p,r,q) \hn^{2(q-r)} \sum_{i=0}^{\nh-1}\sum_{j=-p}^{p} |w|_{H^q(I_{i+j})}^2.
		\end{align*}
		Using the periodicity of $w$, we can express the last term using~$|u|_{H^q(I_{l})}$
		for~$l\in\{0,\ldots,\nh-~1\}$ only. By counting the occurrences of the
		summands~$|u|_{H^q(I_{l})}$, we obtain
		\begin{equation}\nonumber
						\sum_{i=0}^{\nh-1}\sum_{j=-p}^{p} |w|_{H^q(I_{i+j})}^2
						= (2p+1) \sum_{i=0}^{\nh-1} |u|_{H^q(I_{i})}^2 = (2p+1) |u|_{H^q(0,1)}^2, 
		\end{equation}
		which finishes the proof for $C(p,r,q) = (2p+1)^{1/2} \widetilde{C}(p,r,q)$.\qed
\end{proof}

\section{A robust approximation error estimate for two consecutive grids in the periodic case}\label{sec:twogrid}

In this section we analyze the case of approximating a periodic spline function on a fine grid 
by a periodic spline function on a coarser grid. In the next section,
we extend these results to the approximation of general functions and
to the non-periodic case. The extension to the non-periodic case is done by
extending functions in $H^1(0,1)$ to $(-1,1)$ by reflecting them on the $y$-axis. So,
without loss of generality, we will restrict ourselves to $\Omega=(-1,1)$ throughout this
section. Moreover, for the construction of~\eqref{eq:our:mass:formula}, we will
need that $hp<1$, which is stronger than the requirement $hp<b-a$, cf. Theorem~\ref{thrm:approx}.
So, throughout this section, we will use the following assumptions.
\begin{assumption}
		The domain is given by $\Omega=(-1,1)$ and the grid size is small enough such that
		$hp<1$ holds.
\end{assumption}

In the next section, we will make use of a telescoping argument. For this purpose,
we have to analyze a fixed interpolation operator. So, within this section, we will show 
that
\begin{equation}\label{eq:whattolfa}
    	\|(I- \widehat{\Pi}_{p,\hn} ) \uph\|_{L^2(-1,1)} \le \frac{1}{\sqrt{2}}\; \hn |\uph|_{H^1(-1,1)}
\end{equation}
holds for all $\uph\in \Sphhper(-1,1)$, where
$I$ is the identity and  $\widehat{\Pi}_{p,\hn}$  is the $H^1$-orthogonal projection operator,
given by the following definition.
\begin{definition}\label{def:H1projection}
		The projection $\widehat{\Pi}_{p,\hn}:\widehat{H}^1(-1,1)\rightarrow \Sphper(-1,1)$
		maps every $u\in \widehat{H}^1(-1,1)$ to the function $\uph \in \Sphper(-1,1)$ satisfying
		\begin{equation}\label{eq:def:projection}
				(\uph, \vph)_{\Ho(-1,1)} = (u, \vph)_{\Ho(-1,1)}
		\end{equation}
		for all $\vph\in \Sphper(-1,1)$, where
		\begin{equation*}
			(u,v)_{\Ho(-1,1)}:=(u,v)_{H^1(-1,1)} + \left(\int_{-1}^1 u(x)\dd x\right)\left(\int_{-1}^1 v(x)\dd x\right).
		\end{equation*}
\end{definition}

Within the next subsections, we will prove~\eqref{eq:whattolfa}. This will be done by
a rigorous version of Fourier analysis. Fourier analysis is a well-known tool for
analyzing convergence properties of numerical methods, cf. the work by A. Brandt, like~\cite{Brandt:1977},
and many others. It provides a framework to determine sharp bounds for the convergence
rates of multigrid methods and other iterative solvers for problems arising from partial differential equations.
This is different to classical analysis, which typically yields qualitative statements only.
For a detailed introduction into Fourier analysis, see, e.g.,~\cite{Trottenberg:2001}.
Recently, it has also been applied in the area of Isogeometric Analysis, cf.~\cite{Garoni:2014}.

Typically, Fourier analysis is done under simplifying assumptions, like assuming uniform
grids and neglecting the boundary. In this case, one refers to \emph{local} Fourier analysis
(or local mode analysis). This analysis can be understood as a heuristic method to study 
methods of interest. In a recent work, cf.~\cite{Garoni:2014}, it was understood
also as a rigorous statement for a limit case.

We, however, are interested in a completely rigorous analysis. As we restrict ourselves
to periodic spline spaces, the Fourier modes are the exact eigenvectors of the matrices
of interest, which will allow us to diagonalize these matrices using a similarity
transformation. Based on such a diagonalization, we will be able to
prove~\eqref{eq:whattolfa}.

As a first step, we introduce a matrix-vector formulation of~\eqref{eq:whattolfa}.

\subsection{A matrix-vector formulation of the estimate}

Having fixed the B-spline like basis $\{\bsplper^{(i)}\}_{i=0}^{\nh-1}$, we can write
any function $\uph\in \Sphper(-1,1)$ as a linear combination of these basis functions:
\begin{equation*}
			\uph = \sum_{i=0}^{\nh-1} \uph^{(i)} \bsplper^{(i)}.
\end{equation*}
The coefficients $\uph^{(i)}$ can be collected in a coefficient vector: We define
$\uphvec:=(\uph^{(i)})_{i=0}^{\nh-1}$. So, the vector $\uphvec$ is
the representation of the function $\uph$ with respect to the B-spline like basis.
Here and in what follows, we will always assume underlined quantities to be the
basis representation of the corresponding function with respect to the basis $\{\bsplper^{(i)}\}_{i=0}^{\nh}$.

By plugging such a decomposition into the standard $L^2$-scalar product $(\cdot,\cdot)_{L^2(-1,1)}$,
we obtain
\begin{equation*}
		(\uph,\vph)_{L^2(-1,1)} = \sum_{i=0}^{\nh-1}\sum_{j=0}^{\nh-1}\uph^{(i)}\;\vph^{(j)} 
		\; (\bsplper^{(i)},\bsplper^{(j)})_{L^2(-1,1)}.
\end{equation*}
As the grid is equidistant and the splines are periodic, we obtain that for all $i$ and $j$ the relation
$(\bsplper^{(i)},\bsplper^{(j)})_{L^2(-1,1)}=m_{p,h}^{(i-j)}$ holds with coefficients
$m_{p,h}^{(i)}:=(\bsplper^{(i)},\bsplper^{(0)})_{L^2(-1,1)}$. Those coefficients
form a circulant matrix $M_{p,h}:=(m_{p,h}^{(i-j)})_{i=0,\ldots,\nh-1}^{j=0,\ldots,\nh-1}$, which is called the
mass matrix. We immediately obtain
\begin{equation*}
		(\uph,\vph)_{L^2(-1,1)} = (\uphvec,\vphvec)_{M_{p,h}} := \vphvec^T M_{p,h}\uphvec
\end{equation*}
and
\begin{equation*}
		\|\uph\|_{L^2(-1,1)}^2 = \|\uphvec\|_{M_{p,h}}^2 := \uphvec^T M_{p,h} \uphvec.
\end{equation*}
Having a look onto the support of the functions $\bsplper^{(0)}$, we obtain that the bandwidth
of the mass matrix is $2p+1$, i.e. $m_{p,h}^{(i-j)} = 0$ for all $i,j$ with $|i-j|>p$.

Analogously to the definition of the mass matrix, we can introduce the stiffness matrix, representing the $\Ho$-scalar product. 
The stiffness matrix is given by $K_{p,h}:=(k_{p,h}^{(i-j)})_{i=0,\ldots,\nh-1}^{j=0,\ldots,\nh-1}$, where the coefficients are given by 
\begin{equation*}
		k_{p,h}^{(i)} := \left( \bsplper^{(i)}, \bsplper^{(0)} \right)_{\Ho(-1,1)}.
\end{equation*}
Since the basis functions $\bsplper^{(i)}$ form a partition of unity on $\Omega=(-1,1)$, 
$\int_{-1}^1 \bsplper^{(i)}(x) \dd x=\hn$ and further
\begin{equation}\label{eq:def:stiff}
		k_{p,h}^{(i)} = \left( \bsplper^{(i)}, \bsplper^{(0)} \right)_{H^1(-1,1)}
						+ \hn^2.
\end{equation}
Note that for uniform knot vectors the identity 
\begin{equation*}
			\frac{\partial}{\partial x} \varphi_{p,h}^{(j)}(x) = \frac{1}{\hn}\left(\varphi_{p-1,h}^{(j-1)}(x)- \varphi_{p-1,h}^{(j)}(x)\right)
\end{equation*}
holds, see e.g. (5.36) in \cite{Schumaker:1981}. This statement directly carries over to the periodic splines using relation \eqref{eq:basis:varphi}, i.e., 
\begin{equation*}
			\frac{\partial}{\partial x} \bsplper^{(j)}(x) = \frac{1}{\hn}\left(\widehat{\varphi}_{p-1,h}^{(j-1)}(x)- \widehat{\varphi}_{p-1,h}^{(j)}(x)\right)
\end{equation*}
also holds. By plugging this into~\eqref{eq:def:stiff}, the entries of
the stiffness matrix can be derived directly using the entries of the mass matrix for splines of order $p-1$. Straight-forward
calculations show that
\begin{equation}\label{eq:k:decomp}
		K_{p,h} = D_{h} M_{p-1,h} D_{h}^T + E_{h},
\end{equation}
where the gradient matrix $D_{h}:=(d_{h}^{(i-j)})_{i=0,\ldots,\nh-1}^{j=0,\ldots,\nh-1}$ is given by the coefficients
\begin{equation*}
		d_{h}^{(i)} := \frac{1}{\hn} \left\{
				\begin{array}{ll}
						1 & \mbox{ for } i\in \nh \,\mathbb{Z} \\
						-1 & \mbox{ for } i\in \nh \,\mathbb{Z}-1 \\
						0 & \mbox{ else}
				\end{array}
		\right.,
\end{equation*}
the rank-one matrix $E_h$ is given by
$E_h :=  \hn^2 \ul{\bf{1}}_{h} \ul{\bf{1}}_{h}^T$, where $\ul{\bf{1}}_{h}:=(1,\ldots,1)^T\in\mathbb{R}^{\nh}$
is a vector consisting only of ones, representing the constant function.
Note that $D_h$, $E_h$ and, consequently, $K_h$ are also circulant matrices.

To derive a matrix-vector formulation of \eqref{eq:whattolfa}, we have to introduce a matrix that represents
the canonical embedding from $\Sphper(-1,1)$ into $\Sphhper(-1,1)$.
The following lemma is rather well-known in literature, cf. \cite{Chui:1992} equation (4.3.4), and can be easily
shown by induction in $p$.
\begin{lemma}
		For all $p\in\mathbb{N}$, all grid sizes $h$ and all $x\in\mathbb{R}$,
		\begin{equation}\nonumber
				\bspl^{(j)}(x) = 2^{-p} \sum_{l=0}^{p+1} \left(\begin{array}{c}p+1\\l\end{array}\right)
																	\varphi_{p,\tfrac{h}{2}}^{(2j+l)}(x) 
		\end{equation}
		is satisfied for all $j=-p,\ldots,\nh-p-1$.
\end{lemma}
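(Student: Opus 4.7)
The plan is to reduce the identity to a translation/scaling-free statement about cardinal B-splines on $\mathbb{R}$ and then induct on the degree $p$, using the recurrence~\eqref{eq:recur:bspline} on both sides.

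First I would substitute $y = (x-a)/h$ into the definition $\bspl^{(j)}(x) = \psi_p^{(j)}((x-a)/h)$ and note that $\varphi_{p,h/2}^{(2j+l)}(x) = \psi_p^{(2j+l)}(2y)$. The claim then becomes the cardinal two-scale relation
\[
\psi_p^{(j)}(y) = 2^{-p}\sum_{l=0}^{p+1}\binom{p+1}{l}\psi_p^{(2j+l)}(2y), \qquad y\in\mathbb{R},\; j\in\mathbb{Z}.
\]
A straightforward induction on $p$ using the recurrence shows that $\psi_p^{(k)}(y) = \psi_p^{(0)}(y-k)$ for every $k\in\mathbb{Z}$, so after translating both sides by $j$ it suffices to treat the single identity
\[
\psi_p^{(0)}(y) = 2^{-p}\sum_{l=0}^{p+1}\binom{p+1}{l}\psi_p^{(l)}(2y).
\]

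I would prove this by induction on $p$. The base case $p=0$ is immediate: on the right, $\psi_0^{(0)}(2y) + \psi_0^{(1)}(2y)$ is the sum of the indicators of $(0,\tfrac12]$ and $(\tfrac12,1]$, which matches the indicator of $(0,1]$ on the left. For the inductive step, apply~\eqref{eq:recur:bspline} to the left-hand side:
\[
\psi_p^{(0)}(y) = \tfrac{y}{p}\,\psi_{p-1}^{(0)}(y) + \tfrac{p+1-y}{p}\,\psi_{p-1}^{(1)}(y),
\]
expand each of the two terms via the induction hypothesis at degree $p-1$, and in parallel apply~\eqref{eq:recur:bspline} at the finer scale to each $\psi_p^{(l)}(2y)$ appearing on the right-hand side. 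Both sides are then linear combinations of the functions $\psi_{p-1}^{(m)}(2y)$ with coefficients that are affine in $y$. Matching the linear and constant terms reduces the identity to Pascal's rule $\binom{p+1}{m} = \binom{p}{m} + \binom{p}{m-1}$ together with the elementary relation $(p+2-m)\binom{p}{m-2} = p\binom{p-1}{m-2}$.

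The main obstacle is precisely this bookkeeping in the inductive step: two applications of the recurrence must be combined on each side, and the affine-in-$y$ weights have to line up term-by-term with the binomial coefficients. An arguably cleaner alternative (closer to the Fourier-analytic spirit of the paper) uses that $\psi_p^{(0)}$ is the $(p+1)$-fold convolution of $\chi_{(0,1]}$, so $\widehat{\psi_p^{(0)}}(\xi) = ((1-e^{-i\xi})/(i\xi))^{p+1}$; the claim then follows directly from the factorisation
\[
\frac{1-e^{-i\xi}}{i\xi} \;=\; \frac{1+e^{-i\xi/2}}{2}\cdot\frac{1-e^{-i\xi/2}}{i\xi/2}
\]
raised to the $(p+1)$-th power, together with the binomial expansion of $(1+e^{-i\xi/2})^{p+1}$.
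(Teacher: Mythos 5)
Your proof is correct: the paper gives no proof of this lemma at all, merely citing \cite{Chui:1992}, equation (4.3.4), and remarking that the identity ``can be easily shown by induction in $p$'' --- which is exactly the induction you carry out, and your reduction to the cardinal case, the base case, and the two binomial identities in the inductive step all check out. The Fourier-transform alternative is also valid (and consistent with the convolution representation the paper itself uses later, in the proof of Lemma~\ref{lem:mass}).
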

This directly carries over to the periodic splines, i.e., we obtain
\begin{equation}\label{eq:intergrid:per}
	\bsplper^{(j)}(x) = 2^{-p} \sum_{l=0}^{p+1} \left(\begin{array}{c}p+1\\l\end{array}\right)
	\widehat{\varphi}_{p,\tfrac{h}{2}}^{(2j+l)}(x)
	=  \sum_{i\in\mathbb{Z}} 
	\underbrace{2^{-p}\left(\begin{array}{c}p+1\\  i-2j  \end{array}\right)}
	_{\displaystyle p_{p,\tfrac{h}{2}}^{(i,j)}:=}
	\widehat{\varphi}_{p,\tfrac{h}{2}}^{(i)}(x).
\end{equation}
Here, we use equation \eqref{eq:basis:varphi} and that
the binomial coefficient $\left(\begin{array}{c}a\\b \end{array}\right)$ vanishes for $b\not\in \{0,\ldots,a\}$.
Again, we define the matrix $P_{p,\tfrac{h}{2}}:=(p_{p,\tfrac{h}{2}}^{(i,j)})_{i=0,\ldots, 2\nh-1}^{j=0,\ldots,\nh-1}$.
Here and in what follows, we make use of $n_{\frac{h}{2}} = 2 \nh$.

\begin{lemma}\label{lemma:decomp}
		The inequality~\eqref{eq:whattolfa} is equivalent to
		\begin{equation}\label{eq:whattolfa2}
		  		\|M_{p,\frac{\hn}{2}}^{1/2} (I-P_{p,\frac{\hn}{2}} K_{p,\hn}^{-1} P_{p,\frac{\hn}{2}}^T K_{p,\frac{\hn}{2}}) K_{p,\frac{\hn}{2}}^{-1/2}\| 
		  			\le \frac{1}{\sqrt{2}} h  ,
		\end{equation}
		which is a consequence of the combination of
					\begin{align}
			  & \|M_{p,\frac{\hn}{2}}^{1/2}M_{p-1,\frac{\hn}{2}}^{-1/2}\|\le 1\qquad \mbox{and}\label{eq:whattolfa3}\\
			  & \|M_{p-1,\frac{\hn}{2}}^{1/2} (I-P_{p,\frac{\hn}{2}} K_{p,\hn}^{-1} P_{p,\frac{\hn}{2}}^T K_{p,\frac{\hn}{2}}) K_{p,\frac{\hn}{2}}^{-1/2}\| 
			  			\le \frac{1}{\sqrt{2}}  h.  \label{eq:whattolfa4}
			\end{align}
\end{lemma}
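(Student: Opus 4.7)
The plan is to rewrite both \eqref{eq:whattolfa} and \eqref{eq:whattolfa2} as a single spectral-norm inequality, and then deduce that inequality from \eqref{eq:whattolfa3} and \eqref{eq:whattolfa4} by submultiplicativity.

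To start, I would pass to coordinates in the fine B-spline-like basis $\{\widehat{\varphi}_{p,h/2}^{(i)}\}_{i=0}^{n_{h/2}-1}$. For $\uphh\in \Sphhper(-1,1)$ with coefficient vector $\uphhvec$, Galerkin orthogonality for the $(\cdot,\cdot)_{\Ho}$-projection together with the intergrid identity \eqref{eq:intergrid:per} (which makes $P_{p,h/2}$ the matrix embedding coarse coefficients into fine ones) gives $\uphvec = K_{p,h}^{-1} P_{p,h/2}^T K_{p,h/2}\uphhvec$, and re-embedding $\widehat{\Pi}_{p,h}\uphh$ into the fine basis yields residual fine coefficients $T\uphhvec$ with $T := I - P_{p,h/2}K_{p,h}^{-1}P_{p,h/2}^T K_{p,h/2}$. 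Consequently
\[
\|\uphh-\widehat{\Pi}_{p,h}\uphh\|_{L^2(-1,1)}^2 \;=\; \uphhvec^{\,T} T^T M_{p,h/2} T\, \uphhvec.
\]

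Next I would translate the $H^1$-seminorm on the right of \eqref{eq:whattolfa}. Constants belong to $\Sphper(-1,1)$ and the projection fixes them, so $T\,\ul{\bf 1}=0$, where by the partition-of-unity property $\ul{\bf 1}$ represents the constant function $1$ in either basis. Since $\int_{-1}^1 \widehat{\varphi}_{p,h/2}^{(i)}\,\dd x = h/2$ independently of $i$, the condition $\ul{\bf 1}^T\uphhvec=0$ is exactly $\int_{-1}^1\uphh\,\dd x=0$, and then $|\uphh|_{H^1}^2=\|\uphh\|_{\Ho}^2=\uphhvec^{\,T} K_{p,h/2}\uphhvec$. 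For an arbitrary $\uphhvec=c\,\ul{\bf 1}+\ul{v}$ with $\ul{\bf 1}^T\ul{v}=0$, the identity $\ul{\bf 1}^T K_{p,h/2}\ul{v}=(1,v)_{\Ho}=2\int_{-1}^1 v\,\dd x=0$ shows that this splitting is also $K_{p,h/2}$-orthogonal; combined with $T\,\ul{\bf 1}=0$, one sees that \eqref{eq:whattolfa} for all $\uphh$ is equivalent to the matrix inequality
\[
\uphhvec^{\,T} T^T M_{p,h/2} T\,\uphhvec \;\le\; \tfrac{h^2}{2}\,\uphhvec^{\,T} K_{p,h/2}\uphhvec \qquad \forall\,\uphhvec,
\]
which after the substitution $\uphhvec=K_{p,h/2}^{-1/2}\ul{w}$ is exactly \eqref{eq:whattolfa2}.

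Finally I would factor
\[
M_{p,h/2}^{1/2} T K_{p,h/2}^{-1/2} \;=\; \bigl(M_{p,h/2}^{1/2} M_{p-1,h/2}^{-1/2}\bigr)\bigl(M_{p-1,h/2}^{1/2} T K_{p,h/2}^{-1/2}\bigr)
\]
and combine submultiplicativity of the spectral norm with \eqref{eq:whattolfa3} and \eqref{eq:whattolfa4} to obtain the required bound $1\cdot h/\sqrt{2} = h/\sqrt{2}$. The only step needing real care is the equivalence in the previous paragraph: one has to verify that the partition of unity identifies the constant function with $\ul{\bf 1}$ in both bases and that $\widehat{\Pi}_{p,h}$ fixes constants, so that the inequality's content is carried by the zero-mean subspace where the $H^1$-seminorm and the $\Ho$-norm agree. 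Beyond this the argument is pure algebra; the genuinely analytic work is deferred to \eqref{eq:whattolfa3} and \eqref{eq:whattolfa4}, which are handled by the Fourier-analytic machinery developed in the following subsections.
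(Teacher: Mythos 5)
Your proposal is correct and follows essentially the same route as the paper: derive $\uphvec = K_{p,h}^{-1}P_{p,h/2}^T K_{p,h/2}\uphhvec$ from Galerkin orthogonality, rewrite \eqref{eq:whattolfa} as the matrix-norm bound \eqref{eq:whattolfa2}, and deduce it from \eqref{eq:whattolfa3}--\eqref{eq:whattolfa4} by submultiplicativity. You are in fact slightly more careful than the paper on one point it glosses over --- the right-hand side of \eqref{eq:whattolfa2} encodes the $\Ho$-norm rather than the $H^1$-seminorm, and your observation that $T\,\ul{\bf 1}=0$ together with the $K_{p,h/2}$-orthogonality of the constant/zero-mean splitting is exactly what makes the claimed equivalence (not just one implication) hold.
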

Here and in what follows, $\|\cdot\|$ is the Euclidean norm and the square
root $A^{1/2}$ of a symmetric and positive definite matrix $A$ is that symmetric
and positive definite matrix that satisfies $A^{1/2}A^{1/2} = A$.
\begin{proof}{\em of Lemma~\ref{lemma:decomp}}
		Using the introduced matrices $K_{p,h}$ and $P_{p,\tfrac{h}{2}}$, we can rewrite~\eqref{eq:def:projection} for
		the choice $u := \uphh \in \Sphhper$ in matrix-vector form as
		\begin{equation}\nonumber
					(P_{p,\frac{\hn}{2}} \uphvec, P_{p,\frac{\hn}{2}} \vphvec)_{K_{p,\frac{\hn}{2}}}
							= (\uphhvec, P_{p,\frac{\hn}{2}}\vphvec)_{K_{p,\frac{\hn}{2}}},
		\end{equation}
		which is equivalent to 
		\begin{equation}\nonumber
			P_{p,\frac{\hn}{2}}^TK_{p,\frac{\hn}{2}}P_{p,\frac{\hn}{2}} \uphvec = 
			P_{p,\frac{\hn}{2}}^T K_{p,\frac{\hn}{2}} \uphhvec.
		\end{equation}
		This yields, using the Galerkin principle ($P_{p,\frac{\hn}{2}}^TK_{p,\frac{\hn}{2}}P_{p,\frac{\hn}{2}}=K_{p,\hn}$),
		that the coarse-grid approximation $\uphvec$ is given by
		\begin{equation}\nonumber
					\uphvec = K_{p,\hn}^{-1} P_{p,\frac{\hn}{2}}^T K_{p,\frac{\hn}{2}} \uphhvec.
		\end{equation}
		By plugging this into~\eqref{eq:whattolfa}, we see that we have to show
		\begin{equation*}
		  \|(I-P_{p,\frac{\hn}{2}} K_{p,\hn}^{-1} P_{p,\frac{\hn}{2}}^T K_{p,\frac{\hn}{2}}) \uphhvec\|_{M_{p,\frac{\hn}{2}}} 
		  			\le \frac{1}{\sqrt{2}} \hn \|\uphhvec\|_{K_{p,\frac{\hn}{2}}}
		\end{equation*}
		for all $\uphhvec \in \mathbb{R}^{2\nh}$. By rewriting this using a standard matrix norm, we obtain
		\eqref{eq:whattolfa2}.
		Using the semi-multiplicativity of matrix norms, we obtain that~\eqref{eq:whattolfa2} is
		a consequence of \eqref{eq:whattolfa3} and \eqref{eq:whattolfa4}.
\qed\end{proof}

Note that the stiffness matrix for some degree $p$
depends implicitly on the mass matrix for the degree $p-1$. So, analyzing~\eqref{eq:whattolfa4} is more convenient
than analyzing~\eqref{eq:whattolfa2} as the inequality~\eqref{eq:whattolfa4} depends just on the one mass matrix $M_{p-1,\frac{\hn}{2}}$,
whereas~\eqref{eq:whattolfa2} depends on two mass matrices: $M_{p-1,\frac{\hn}{2}}$ and $M_{p,\frac{\hn}{2}}$.
We will show~\eqref{eq:whattolfa3} in the next subsection and~\eqref{eq:whattolfa4} in the remainder of this section.

\subsection{A lemma relating the mass matrices for different polynomial degrees}

The estimate~\eqref{eq:whattolfa3} is a direct consequence of the following lemma.
\begin{lemma}\label{lem:mass}
		For all $p\in \mathbb{N}$, grid sizes $h$ and vectors
		$\ul{u}_h \in \mathbb{R}^{\nh}$, the inequality
		\begin{equation}\nonumber%\label{eq:lem:mass}
					\|\ul{u}_h\|_{M_{p,h}} \le \|\ul{u}_h\|_{M_{p-1,h}}
		\end{equation}
		is satisfied. 
\end{lemma}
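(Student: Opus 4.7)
The plan is to exploit the classical convolution identity $\psi_p = \psi_0 * \psi_{p-1}$ for cardinal B-splines, which after rescaling and periodization expresses each periodic basis function $\bsplper^{(i)}$ as a local sliding average of the corresponding degree-$(p-1)$ basis function $\bsplperpme^{(i)}$. A pointwise Cauchy--Schwarz inequality will then deliver the desired $L^2$ contraction and, equivalently, the matrix inequality of the lemma.

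First I would establish the averaging identity
\[
    \bsplper^{(i)}(x) \;=\; \frac{1}{h}\int_0^h \bsplperpme^{(i)}(x-s)\,\dd s,
\]
where $\bsplperpme^{(i)}(x-s)$ is understood via its periodic extension from $\Omega=(-1,1)$ to $\mathbb{R}$. The non-periodic version, $\varphi_{p,h}^{(i)}(x) = \tfrac{1}{h}\int_0^h \varphi_{p-1,h}^{(i)}(x-s)\,\dd s$, follows from the cardinal convolution formula $\psi_p = \psi_0 * \psi_{p-1}$ by the change of variables $y=(x-a)/h$. Summing this identity over the translates $i+jn_h$ as in \eqref{eq:basis:varphi} and interchanging sum and integral then yields the periodic statement.

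Next, given $\ul{u}_h \in \mathbb{R}^{n_h}$, I would form the two periodic splines built from the same coefficients but different degrees,
\[
    u_{p,h} := \sum_{i=0}^{n_h-1} u_h^{(i)}\,\bsplper^{(i)}, \qquad
    u_{p-1,h} := \sum_{i=0}^{n_h-1} u_h^{(i)}\,\bsplperpme^{(i)},
\]
so that $\|\ul{u}_h\|_{M_{p,h}}^2 = \|u_{p,h}\|_{L^2(-1,1)}^2$ and $\|\ul{u}_h\|_{M_{p-1,h}}^2 = \|u_{p-1,h}\|_{L^2(-1,1)}^2$. By linearity the averaging identity lifts to $u_{p,h}(x) = \tfrac{1}{h}\int_0^h u_{p-1,h}(x-s)\,\dd s$. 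The Cauchy--Schwarz inequality (equivalently, Jensen's inequality for the uniform probability measure on $[0,h]$) then yields $|u_{p,h}(x)|^2 \le \tfrac{1}{h}\int_0^h |u_{p-1,h}(x-s)|^2\,\dd s$ pointwise. Integrating over $\Omega$, swapping the order of integration, and invoking the periodicity of $u_{p-1,h}$ to translate the inner $L^2$-norm gives $\|u_{p,h}\|_{L^2(-1,1)} \le \|u_{p-1,h}\|_{L^2(-1,1)}$, which is precisely the claim.

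The main obstacle is the averaging identity in the periodic setting: although the cardinal convolution formula is standard, one must justify interchanging the periodization sum with the $s$-integration and clarify the meaning of $\bsplperpme^{(i)}(x-s)$ when $x-s\notin\Omega$. Once this identity is in place, the inequality reduces to a routine Cauchy--Schwarz argument that introduces no $p$-dependent constants.
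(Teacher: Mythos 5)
Your proposal is correct and follows essentially the same route as the paper: the paper's proof also rests on the convolution identity $\bsplper^{(i)}(x) = h^{-1}\int_0^h \bsplperpme^{(i)}(x-t)\,\dd t$, applies Cauchy--Schwarz to the inner averaging integral, and uses periodicity to translate the inner $L^2$-norm before concluding. The only cosmetic difference is that you phrase the estimate pointwise for the assembled spline before integrating, whereas the paper manipulates the squared $L^2$-norm directly; the substance is identical.
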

\begin{proof}
		First we observe that the convolution formula for cardinal B-splines, cf. equation~(13) in~\cite{Garoni:2014},
		can be carried over to the functions $\bsplper^{(i)}$, i.e., that
		\begin{equation}\label{eq:rel1}
				\bsplper^{(i)}(x) = h^{-1} \int_0^{h} \bsplperpme^{(i)}(x-t) \dd t
		\end{equation}
		holds. Let $\ul{u}_h=(u_h^{(i)})_{i=0}^{\nh-1}$. Then, using~\eqref{eq:rel1}, we have that
		\begin{align*}
				\|\ul{u}_h\|_{M_{p,h}}^2 
						& = \int_{-1}^1 \left( \sum_{i=0}^{\nh-1} u_h^{(i)} \bsplper^{(i)}(x) \right)^2 \dd x \\
						& = \int_{-1}^1 \left( \sum_{i=0}^{\nh-1} u_h^{(i)} h^{-1} \int_0^h \bsplperpme^{(i)}(x-t) \dd t \right)^2 \dd x \\
						& = h^{-2} \int_{-1}^1 \left( \int_0^{h}  \left( \sum_{i=0}^{\nh-1} u_h^{(i)}  \bsplperpme^{(i)}(x-t) \right) \dd t \right)^2 \dd x \\
						& = h^{-2} \int_{-1}^1 \left( \int_0^{h} 1 \, s(x-t) \dd t \right)^2 \dd x
		\end{align*}
		holds, where $s(x):= \sum_{i=0}^{\nh-1} u_h^{(i)} \bsplperpme^{(i)}(x-t)$.

		Now, we apply the Cauchy-Schwarz inequality to the inner integral and obtain
		\begin{align*}
				\|\ul{u}_h\|_{M_{p,h}}^2 
						& \leq h^{-2} \int_{-1}^1 \left( \int_0^{h} 1^2 \dd t \right)\left( \int_0^{h}s^2(x-t) \dd t \right) \dd x \\
						& = h^{-1} \int_{-1}^1 \int_0^{h} s^2(x-t) \dd t\,\dd x = h^{-1} \int_0^{h} \int_{-1}^1 s^2(x-t) \dd x\,\dd t.
		\end{align*}
		Observe that due to periodicity, $\int_{-1}^1 s^2(x-t) \dd x =
		\int_{-1}^1 s^2(\xi) \dd \xi$ for all $t\in[0,h]$, which implies
		\begin{align*}
				\|\ul{u}_h\|_{M_{p,h}}^2
						& \le h^{-1} \int_0^{h} \int_{-1}^1 s^2(\xi) \dd \xi\,\dd t
						 = h^{-1} \left(\int_{0}^h 1\dd t\right)\left( \int_{-1}^1  s^2(\xi) \dd \xi \right)\\
						& = \int_{-1}^1  \left( \sum_{i=0}^{\nh-1} u_h^{(i)} \bsplperpme^{(i)}(\xi) \right)^2 \dd \xi 	 = \|\ul{u}_h\|_{M_{p-1,h}}^2,
		\end{align*}
		which finishes the proof.
\qed\end{proof}

\subsection{Symbols of mass matrix and stiffness matrix}\label{subsec:symbols}

As the matrices $M_{p,h}$ and $K_{p,h}$ are circulant matrices,
we can analyze them using Fourier analysis. So, we consider the Fourier vectors
\begin{equation*}
			\ul{f}_{h,j}:=(\ee^{ 2ij h\pi  \ii})_{i=0}^{\nh-1} \qquad \mbox{ for } j = 0,\ldots, \nh-1,
\end{equation*}
where $\ii$ is the imaginary unit. 

We observe (using that the bandwith of the mass matrix is $2p+1$) that
\begin{align*}
	( M_{p,h} \ul{f}_{h,j} )_i
	 &= \sum_{l=-p}^p  m_{p,h}^{(l)} \ee^{2(i+l)jh\pi \ii }
	 = \sum_{l=-p}^p  m_{p,h}^{(l)} \ee^{2ljh\pi \ii } \ee^{2ijh\pi \ii }\\
	 &= \underbr{\sum_{l=-p}^p  m_{p,h}^{(l)} \ee^{2ljh\pi \ii  }}{ \widehat{m}_{p,h}^{(j)}:= } ( \ul{f}_{h,j} )_i
\end{align*}
for all $i=0,\ldots,\nh-1$ and $j=0,\ldots,\nh-1$ and consequently
\begin{equation}\nonumber%\label{eq:symb:M0}
	M_{p,h} \ul{f}_{h,j} = \widehat{m}_{p,h}^{(j)} \ul{f}_{h,j}
\end{equation}
is satisfied for all $j=0,\ldots,\nh-1$, i.e., that $\ul{f}_{h,j}$ is an eigenvector of $M_{p,h}$ with corresponding eigenvalue
$\widehat{m}_{p,h}^{(j)}$. As we have identified $\nh$ different eigenvalues, the corresponding eigenvectors
define a basis of $\mathbb{R}^{\nh}$. Therefore, the matrix $\mathbb{F}_{h}$, obtained by collecting
the vectors $\ul{f}_{h,j}$, i.e.,
\begin{equation*}
		\mathbb{F}_{h} := \left( \begin{array}{cccc} \ul{f}_{h,0} & \ul{f}_{h,1} &  \cdots & \ul{f}_{h,\nh-1} \end{array}\right)
		= (\ee^{ 2ij h\pi  \ii})_{i=0,\ldots,\nh-1}^{j=0,\ldots,\nh-1},
\end{equation*}
is a non-singular matrix. As~$\mathbb{F}_{h}$ is the matrix built from the eigenvectors, it diagonalizes
the matrix $M_{p,h}$, i.e.,
\begin{equation}\label{eq:mhat}
		\mathbb{F}_{h}^{-1} M_{p,h} \mathbb{F}_{h} = \widehat{M}_{p,h}, 
\end{equation}
where $\widehat{M}_{p,h}:=\diag(\widehat{m}_{p,h}^{(0)},\ldots,\widehat{m}_{p,h}^{(\nh-1)})$. Analogously, we obtain
\begin{equation}\label{eq:dhat}
		\mathbb{F}_{h}^{-1} D_{h} \mathbb{F}_{h} = \widehat{D}_{h},
\end{equation}
where $\widehat{D}_{h}:=\diag(\widehat{d}_{h}^{(0)},\ldots,\widehat{d}_{h}^{(\nh-1)})$ with
\begin{equation}\label{eq:dhatcoef}
			\widehat{d}_{h}^{(j)}:=\hn^{-1}(1-\ee^{2jh\pi\ii}).
\end{equation}
Using the same construction we obtain that further 
\begin{equation}\label{eq:dhat:star}
		\mathbb{F}_{h}^{-1} D_{h}^T \mathbb{F}_{h} = \widehat{D}_{h}^*.
\end{equation}
With $\widehat{D}_{h}^*$ we denote the adjoint (the conjugate transpose) of the matrix $\widehat{D}_{h}$. Note that $E_h=\hn^2 \ul{\bf{1}}_h \ul{\bf{1}}_h^T$
is a circulant matrix with rank $1$. The only non-zero eigenvalue is $\hn$, with corresponding eigenvector
$\ul{\bf{1}}_h = \ul{f}_{h, 0}$. So, we obtain
\begin{equation}\label{eq:ehat}
			\mathbb{F}_{h}^{-1} E_h \mathbb{F}_{h} = \widehat{E}_h
\end{equation}
where $\widehat{E}_h:=\diag(\widehat{e}_{h}^{(0)},\ldots,\widehat{e}_{h}^{(\nh-1)})$ with
\begin{equation}\label{eq:ehatcoef}
	\widehat{e}_{h}^{(j)}:=\left\{\begin{array}{ll}\hn &\mbox{ for } j=0\\0&\mbox{ otherwise.}\end{array}\right.
\end{equation}
So, we can determine, $\widehat{K}_{h}$, the symbol of the stiffness matrix. Using~\eqref{eq:k:decomp},
\eqref{eq:mhat}, \eqref{eq:dhat}, \eqref{eq:dhat:star} and~\eqref{eq:ehat}, we obtain that
\begin{equation}\label{eq:khat}
		\mathbb{F}_{h}^{-1} K_{p,h} \mathbb{F}_{h}=  \widehat{K}_{h},
\end{equation}
where $\widehat{K}_{h}:=\diag(\widehat{k}_{p,h}^{(0)},\ldots,\widehat{k}_{p,h}^{(\nh-1)})$ with
\begin{equation}\label{eq:khatcoef}
		\widehat{k}_{p,h}^{(j)} := \widehat{d}_{h}^{(j)}\widehat{m}_{p-1,h}^{(j)} (\widehat{d}_{h}^{(j)})^*+\widehat{e}_{h}^{(j)}.
\end{equation}

\subsection{Symbol of the intergrid transfer}\label{subsec:symbols2}

The following lemma characterizes the symbol of the intergrid transfer.
\begin{lemma}\label{lem:phat}
		We have
		\begin{equation}\label{eq:phat}
				\mathbb{F}_{\frac{\hn}{2}}^{-1} P_{p,\frac{\hn}{2}}\mathbb{F}_{\hn} = \widehat{P}_{p,\frac{\hn}{2}},
		\end{equation}
		where $\widehat{P}_{p,\frac{\hn}{2}}:=(\widehat{p}_{p,\frac{\hn}{2}}^{(i,j)})_{i=0,\ldots,2\nh-1}^{j=0,\ldots,\nh-1}$ with
		\begin{equation}\label{eq:phatcoef}
				\widehat{p}_{p,\frac{\hn}{2}}^{(i,j)} :=
					2^{-p-1}\left\{
							\begin{array}{ll}
									\left(1+\ee^{- 2i\frac{h}{2}\pi\ii} \right)^{p+1} 
											& \mbox{ for } i - j \in \{0,\nh\}\\
									0
											& \mbox{ otherwise }
							\end{array}
					\right.
		\end{equation}
		for all $i=0,\ldots,2\nh-1$ and all $j=0,\ldots,\nh-1$.
\end{lemma}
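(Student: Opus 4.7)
My approach is to verify the Fourier diagonalization column by column, that is, to prove
\begin{equation*}
P_{p,\frac{h}{2}}\,\ul{f}_{h,j}
\;=\;\widehat{p}_{p,\frac{h}{2}}^{(j,j)}\,\ul{f}_{\frac{h}{2},j}
\;+\;\widehat{p}_{p,\frac{h}{2}}^{(j+\nh,j)}\,\ul{f}_{\frac{h}{2},j+\nh}
\end{equation*}
for each $j\in\{0,\ldots,\nh-1\}$. The sparsity pattern in \eqref{eq:phatcoef} guarantees that these are the only nonvanishing contributions on the right, so the entire task reduces to computing the fine-grid Fourier image of the coarse Fourier vector $\ul{f}_{h,j}$ under $P_{p,h/2}$ and then reading off the two surviving coefficients.

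Starting from $p^{(\iota,m)}_{p,h/2}=2^{-p}\binom{p+1}{\iota-2m}$, the $\iota$-th entry of $P_{p,h/2}\ul{f}_{h,j}$ is a finite binomial sum weighted by complex exponentials. I would substitute $l:=\iota-2m$, which locks the parity of $l$ to that of $\iota$; after factoring out the prefactor $\ee^{\iota jh\pi\ii}$ the remaining sum takes the form $\sum_{l\equiv \iota\,(\mathrm{mod}\,2)}\binom{p+1}{l}z^l$ with $z:=\ee^{-jh\pi\ii}$. The elementary parity identity
\begin{equation*}
2\sum_{l\text{ even}}\binom{p+1}{l}z^l=(1+z)^{p+1}+(1-z)^{p+1},\qquad
2\sum_{l\text{ odd}}\binom{p+1}{l}z^l=(1+z)^{p+1}-(1-z)^{p+1}
\end{equation*}
collapses the binomial sum to a clean combination of $(1\pm z)^{p+1}$.

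The decisive step is then an aliasing argument: the sign $(-1)^\iota$ produced by the parity split is absorbed into a frequency shift by $\nh$ on the fine grid, so that $1-\ee^{-jh\pi\ii}$ is rewritten as $1+\ee^{-(j+\nh)h\pi\ii}$ and $(-1)^\iota \ee^{\iota jh\pi\ii}$ becomes $\ee^{\iota(j+\nh)h\pi\ii}$. The even-parity contribution is then exactly the $\ul{f}_{h/2,j}$ component with coefficient $2^{-p-1}(1+\ee^{-jh\pi\ii})^{p+1}$, and the odd-parity contribution is the $\ul{f}_{h/2,j+\nh}$ component with coefficient $2^{-p-1}(1+\ee^{-(j+\nh)h\pi\ii})^{p+1}$. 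Both of these agree with \eqref{eq:phatcoef}, which completes the identification.

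I expect the main conceptual hurdle to be this aliasing move. Unlike the square circulant matrices $M_{p,h}$, $D_h$, $E_h$ and $K_{p,h}$ diagonalized in Subsection~\ref{subsec:symbols}, the prolongation $P_{p,h/2}$ is rectangular, so a single coarse Fourier vector is not a true eigenvector; instead, it is mapped onto the two-dimensional subspace spanned by the low-frequency mode $\ul{f}_{h/2,j}$ and its high-frequency alias $\ul{f}_{h/2,j+\nh}$. Once this two-mode coupling is recognized and made explicit via the sign/shift identity, the rest of the proof is a direct application of the binomial theorem together with bookkeeping on complex exponentials.
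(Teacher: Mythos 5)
Your proposal is correct and follows essentially the same route as the paper: both compute $P_{p,\frac{h}{2}}\ul{f}_{h,j}$ column by column, split the binomial sum by parity, collapse it via the binomial theorem to $(1\pm z)^{p+1}$, and identify the sign $(-1)^{\iota}$ with the aliased fine-grid mode $\ul{f}_{\frac{h}{2},j+\nh}$. The paper merely performs these steps in a slightly different order, inserting the parity filter $\tfrac12(1+\ee^{t\pi\ii})$ into the sum before applying the binomial theorem, which is the same algebra as your even/odd binomial identity.
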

\begin{proof}
	The equation~\eqref{eq:phat} is equivalent to $P_{\frac{\hn}{2}}\mathbb{F}_{\hn} = \mathbb{F}_{\frac{\hn}{2}} \widehat{P}_{\frac{\hn}{2}}$.
	We obtain using~\eqref{eq:intergrid:per} and the definition of $\mathbb{F}_{\hn}$ for any unit vector 
	$\ul{\textbf{I}}_{\hn}^{(j)}$ with $j=0,\ldots,\nh-1$ that
	\begin{align*}
					&P_{\frac{\hn}{2}}\mathbb{F}_{\hn}\ul{\textbf{I}}_{\hn}^{(j)}
							 = P_{\frac{\hn}{2}}\ul{f}_{\hn,j} 
							 = 2^{-p} \left( \sum_{r\in \mathbb{Z} } \left(\begin{array}{c}p+1\\ i-2r \end{array}\right) 
										\ee^{ 2jr h \pi \ii} \right)_{i=0}^{2\nh-1}.
	\end{align*}
	Because $\tfrac12(1+\ee^{ t \pi \ii})$ takes the value $0$ for $t$ being odd and $1$ for $t$ being even, we
	can substitute $r$ by $2t$ and obtain
	\begin{align*}
							&P_{\frac{\hn}{2}}\mathbb{F}_{\hn}\ul{\textbf{I}}_{\hn}^{(j)} 
							= 2^{-p-1} \left( \sum_{t\in \mathbb{Z} } \left(\begin{array}{c}p+1\\ i-t \end{array}\right) 
										\ee^{ 2 jt \frac{h}{2} \pi \ii} ( 1+\ee^{ t \pi \ii})\right)_{i=0}^{2\nh-1} \\
							&\quad = 2^{-p-1} \left( \sum_{k\in \mathbb{Z} } \left(\begin{array}{c}p+1\\ k \end{array}\right) 
										\ee^{ 2 j(i-k) \frac{h}{2} \pi \ii} ( 1+\ee^{ (i-k) \pi \ii})\right)_{i=0}^{2\nh-1} \\
							&\quad = 2^{-p-1} \sum_{k\in \mathbb{Z} } \left(\begin{array}{c}p+1\\k\end{array}\right) 
											\left( \ee^{- 2 jk \frac{h}{2}   \pi\ii} 
												\ul{f}_{\frac{\hn}{2},j} 
										+  \ee^{-2(j+\nh)k\frac{h}{2}  \pi\ii}
												 \ul{f}_{\frac{\hn}{2},j+\nh} \right) \\
							&\quad = 2^{-p-1}\left(1+\ee^{- 2j\frac{h}{2}\pi\ii} \right)^{p+1} \ul{f}_{\frac{\hn}{2},j}  
							 + 2^{-p-1}\left(1+\ee^{-2(j+\nh)\frac{h}{2}\pi\ii} \right)^{p+1} \ul{f}_{\frac{\hn}{2},j+\nh}.
	\end{align*}
	This shows that the $j$-th column of $P_{\frac{\hn}{2}}\mathbb{F}_{\hn}$ is just the combination
	of two columns of $\mathbb{F}_{\frac{\hn}{2}}$. Therefore, the matrix 
	$\widehat{P}_{\frac{\hn}{2}}$ has just two non-zero
	entries, in the $j$-th row: those which we have claimed in~\eqref{eq:phatcoef}.
\qed\end{proof}

For determining the symbol of $P_{p,\frac{h}{2}}^T$, we observe as follows.
As the Fourier modes $\ul{f}_{\hn,j}$ are pairwise orthogonal, and $\ul{f}_{\hn,j}^*\ul{f}_{\hn,j} = \nh$, we immediately
obtain $\mathbb{F}_{\hn}^*\mathbb{F}_{\hn} = \nh I$ and, consequently, $\mathbb{F}_{\hn}^{-1} = \hn \mathbb{F}_{\hn}^*$.
So, we obtain using~\eqref{eq:phat} that
\begin{equation}\label{eq:phattranspose}
			\mathbb{F}_{h}^{-1} P_{p,\frac{h}{2}}^T \mathbb{F}_{\frac{h}{2}}
						= ( \mathbb{F}_{\frac{h}{2}}^* P_{p,\frac{h}{2}} \mathbb{F}_{h}^{-*} )^*
						= ( 2 \mathbb{F}_{\frac{h}{2}}^{-1} P_{p,\frac{h}{2}} \mathbb{F}_{h} )^* 
						= 2 \widehat{P}_{p,\frac{h}{2}}^*.
\end{equation}

\subsection{Some statements on the symbol of the mass matrix}

A closed form for the symbol of the mass matrix is not known.
Within this subsection we will show a few statements characterizing the symbol,
which we will need later on.
Due to \cite{Chui:1992,Wang:2010}, we have
\begin{equation}\label{eq:our:mass:formula}
		m_{p,\hn}^{(j)} = \hn \frac{ E(2 p + 1, p + j)}{(2 p + 1)!},
\end{equation}
where $j\in\{-p,\ldots,p\}$. Here, $E(n,k)$ are the Eulerian numbers, which satisfy the recurrence relation
\begin{equation*}
		E(n, k) = (n - k) E(n - 1, k - 1) + (k + 1) E(n - 1, k)
\end{equation*}
and the initial condition
\begin{equation*}
		E(0, j) =\left\{
			\begin{array}{ll}
					1 & \mbox{for } j= 0\\
					0 & \mbox{for } j\not= 0
			\end{array}
			\right..
\end{equation*}
A similar result was also stated in~\cite{Garoni:2014}. There, the entries of the mass
matrix, i.e., the $L^2$-products of two B-splines of order $p$ have been shown to be
equal to the function value of one B-spline of order $p+1$. Using the recurrence
relation~\eqref{eq:recur:bspline}, one obtains that the result in~\cite{Garoni:2014}
is equivalent to~\eqref{eq:our:mass:formula}. 

As $m_{p,\hn}^{(j)}=m_{p,\hn}^{(-j)}$ and $\ee^{\theta \ii} +\ee^{-\theta \ii} = 2 \cos\theta$,
we obtain
\begin{equation}\nonumber%\label{eq:symb:M}
				\widehat{m}_{p,\hn}^{(j)} = \hn \sum_{l=-p}^p\frac{ E(2 p + 1, p + l)}{(2p + 1)!}\cos(2ljh\pi).
\end{equation}

The symbol is better characterized by the following lemma.
\begin{lemma}\label{lem:mass:estim}
		The following two statements hold:
		\begin{itemize}
			\item $\widehat{m}_{p,\hn}^{(j)}> 0$ for all $j=0,\ldots,\nh-1$ and
			\item	$\widehat{m}_{p,\hn}^{(j)}\le \widehat{m}_{p,\hn}^{(k)}$ for all $j,k=0,\ldots,\nh-1$ with
							$\cos(2jh\pi) \le \cos(2kh\pi)$.
		\end{itemize}
\end{lemma}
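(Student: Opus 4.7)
The plan is to prove the two bullets separately. Positivity is immediate from the Gram-matrix nature of $M_{p,h}$: under the running assumption $hp<1$, the periodic B-spline-like basis $\{\bsplper^{(i)}\}_{i=0}^{\nh-1}$ is linearly independent (cf.\ the discussion preceding Definition~\ref{defi:basis-per}), so $M_{p,h}$ is symmetric positive definite, and all its eigenvalues $\widehat{m}_{p,h}^{(j)}$ are strictly positive. The hard part will be the monotonicity in $\cos(2jh\pi)$, for which I would exploit the Eulerian-number formula~\eqref{eq:our:mass:formula} and rewrite the symbol as an explicit product of linear functions of $\cos(2jh\pi)$.

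Concretely, substituting $k=p+l$ in the definition of $\widehat{m}_{p,h}^{(j)}$ and applying~\eqref{eq:our:mass:formula} gives, with $\theta:=2jh\pi$,
\begin{equation*}
\widehat{m}_{p,h}(\theta) \;=\; \frac{h\,e^{-\ii p\theta}}{(2p+1)!}\,A_{2p+1}(e^{\ii\theta}),
\qquad A_n(t):=\sum_{k=0}^{n-1} E(n,k)\,t^k,
\end{equation*}
where $A_n$ is the Eulerian polynomial. By the classical theorem of Frobenius, $A_n$ has only real, non-positive roots; combined with its palindromic property $A_n(t)=t^{n-1}A_n(1/t)$ and the fact that $A_{2p+1}(-1)\ne 0$ for all $p\ge 0$ (these values are, up to sign, the nonzero tangent numbers), the roots of $A_{2p+1}$ pair up as $\{-r_i,-1/r_i\}_{i=1}^{p}$ with $r_i>0$ and $r_i\ne 1$. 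Using $E(2p+1,2p)=1$ for the leading coefficient, this gives the factorization
\begin{equation*}
A_{2p+1}(t) \;=\; \prod_{i=1}^{p}(t+r_i)(t+1/r_i),
\end{equation*}
and a short computation based on $(e^{\ii\theta}+r)(e^{\ii\theta}+1/r) = e^{\ii\theta}(2\cos\theta + r + 1/r)$ cancels the prefactor $e^{-\ii p\theta}$ and leaves
\begin{equation*}
\widehat{m}_{p,h}(\theta) \;=\; \frac{h}{(2p+1)!}\prod_{i=1}^{p}\bigl(2\cos\theta + r_i + 1/r_i\bigr).
\end{equation*}
Since $r_i+1/r_i>2$ strictly, each factor is strictly positive and strictly increasing as a linear function of $\cos\theta$ on $[-1,1]$, so the product is strictly positive and strictly increasing in $\cos\theta$, which yields the second bullet (and reproves the first).

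The main obstacle is the real-rootedness of the Eulerian polynomial; if invoking Frobenius feels too external, it can be proved in a self-contained manner by induction on $n$ using the standard recurrence $A_{n+1}(t) = (1+nt)A_n(t) + t(1-t)A_n'(t)$ together with a Rolle-type interlacing argument. The remaining ingredients (palindromy, nonvanishing of $A_{2p+1}(-1)$, and the $e^{\ii\theta}$ evaluation) are entirely elementary.
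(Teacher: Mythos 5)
Your proof is correct, but it takes a genuinely different route from the paper's. The paper substitutes $c=1+\cos(2jh\pi)\in[0,2]$, rewrites the symbol as a polynomial $g_p(c)$ using Chebyshev polynomials, derives a recurrence for $g_p$ from the Eulerian recurrence, and shows by induction on $p$ that \emph{all} coefficients of $g_p$ are strictly positive; both bullets then follow at once from $g_p(0)>0$ and monotonicity of a polynomial with positive coefficients. You instead (i) get positivity for free from the observation that $M_{p,h}$ is the Gram matrix of a basis, hence symmetric positive definite with eigenvalues $\widehat{m}_{p,h}^{(j)}$, and (ii) obtain monotonicity from the factorization of the Eulerian polynomial $A_{2p+1}$ over its real negative roots (Frobenius), paired by the palindromic symmetry $E(n,k)=E(n,n-1-k)$ into $\{-r_i,-1/r_i\}$, which turns the symbol into the explicit product $\tfrac{h}{(2p+1)!}\prod_{i=1}^{p}\bigl(2\cos\theta+r_i+1/r_i\bigr)$ with every factor positive and increasing in $\cos\theta$. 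Your computation checks out (the prefactor $e^{-\ii p\theta}$ cancels exactly against the $p$ factors of $e^{\ii\theta}$), and your argument buys more than the paper's: a closed-form product representation of the symbol and \emph{strict} monotonicity. The cost is reliance on external facts, chiefly the real-rootedness of Eulerian polynomials and the non-vanishing of $A_{2p+1}(-1)$, whereas the paper's coefficient induction is entirely self-contained. One remark: the non-vanishing at $-1$ is not actually needed for the lemma as stated, since by palindromy a root at $-1$ would have even multiplicity and contribute factors $2\cos\theta+2\ge 0$, which are still non-negative and non-decreasing, and your Gram-matrix argument already supplies strict positivity independently.
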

\begin{proof}
		For $c\in[0,2]$, we define
		\begin{equation*}
				g_{p}(c) := \sum_{l=-p}^p\frac{ E(2 p + 1, p + l)}{(2p + 1)!}\cos(l\arccos (c-1))
		\end{equation*}
		and observe $g_{p}(c) = \hn^{-1} \widehat{m}_{p,\hn}^{(\eta(c))}$, where $\eta(c):=\frac{1}{2h\pi}\arccos (c-1)$.
		The statement of the lemma is now equivalent to the combination of the following two statements:
		\begin{itemize}
				\item $\hn^{-1}\widehat{m}_{p,\hn}^{(\eta(0))}=g_{p}(0)>0$ and
				\item $\hn^{-1}\widehat{m}_{p,\hn}^{(\eta(c))}=g_{p}(c)$ is monotonically increasing	for $c>0$.
		\end{itemize}
		Since we can express $\cos(l\arccos (c-1))$ as the $l$-th Chebyshev polynomial,
		$g_{p}$ is a polynomial function in $c$.  Using the recurrence relation for the Eulerian numbers, we can
		derive the following recurrence formula for $g_{p}$:
		\begin{align*}
				g_{p}(c)=\frac{1+c p}{1+2 p} g_{p-1}(c)+\frac{(2-c) (1+c (2 p-1))}{p (1+2 p)} g_{p-1}'(c)+\frac{(c-2)^2 c}{p (1+2 p)} g_{p-1}''(c).
		\end{align*}
		We can make an ansatz
		\begin{equation*}
				g_{p}(c) = \sum_{j=0}^p a_{p,j} c^j,
		\end{equation*}
		where we use $0^0 = 1$, and derive the recurrence formula 
		\begin{equation*}
				a_{p,j}=\underbr{\frac{(1-j+p)^2}{p+2 p^2}}{A_{p,j}:=} a_{p-1,j-1}
						+\underbr{\frac{4j (p-j)+j+p}{p+2 p^2}}{B_{p,j}:=} a_{p-1,j}
						+\underbr{\frac{2+6 j+4 j^2}{p+2 p^2}}{C_{p,j}:=} a_{p-1,j+1}
		\end{equation*}
		for the coefficients $a_{p,j}$. 
		For $p=1$, we obtain
		\begin{equation*}
				a_{1,j} = \left\{
						\begin{array}{ll}
								\tfrac13 & \mbox{ for $j\in\{0,1\}$ } \\
								0 & \mbox{ otherwise.}
						\end{array}
				\right.
		\end{equation*}
		As $A_{p,j}>0$, $B_{p,j}>0$ and $C_{p,j}>0$ for $0\le j \le p$, one can show using
		induction in $p$ that for all $p\ge 1$:
		\begin{equation*}
				 \left\{
						\begin{array}{ll}
								a_{p,j} > 0 & \mbox{ for $j\in\{0,1,\ldots,p\}$ } \\
								a_{p,j} = 0 & \mbox{ otherwise.}
						\end{array}
				\right.
		\end{equation*}
		This immediately implies that $g_{p}(0)>0$ and that 
		$g_{p}(c)$ is monotonically increasing for $c>0$, which concludes the proof.\qed
\end{proof}

\subsection{An estimate for the projection operator}

Now, we are able to prove the following lemma.
\begin{lemma}\label{lem:lfa0}
		The inequality~\eqref{eq:whattolfa4} holds.
\end{lemma}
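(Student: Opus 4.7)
The plan is to prove \eqref{eq:whattolfa4} by Fourier diagonalization, reducing it to $\nh$ independent scalar bounds, one per coarse-grid frequency.

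First I conjugate the operator by $\mathbb{F}_{\frac{\hn}{2}}$: by \eqref{eq:khat} and \eqref{eq:mhat} the fine-grid matrices $K_{p,\frac{\hn}{2}}$ and $M_{p-1,\frac{\hn}{2}}$ become diagonal, while $K_{p,\hn}^{-1}$ is diagonalized by $\mathbb{F}_{\hn}$. By Lemma~\ref{lem:phat} together with \eqref{eq:phattranspose}, each coarse Fourier index $j\in\{0,\ldots,\nh-1\}$ couples under $\widehat{P}_{p,\frac{\hn}{2}}$ only to the two fine indices $j$ and $j+\nh$. After conjugation, the operator inside \eqref{eq:whattolfa4} is therefore block-diagonal with $\nh$ independent $2\times 2$ blocks, and its spectral norm equals the maximum over $j$ of the spectral norms of these blocks weighted on the left by the $2\times 2$ sub-block of $\widehat{M}_{p-1,\frac{\hn}{2}}^{1/2}$ and on the right by that of $\widehat{K}_{p,\frac{\hn}{2}}^{-1/2}$.

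The key algebraic observation in each block is the Galerkin identity $P_{p,\frac{\hn}{2}}^{T}K_{p,\frac{\hn}{2}}P_{p,\frac{\hn}{2}}=K_{p,\hn}$, which holds because $P_{p,\frac{\hn}{2}}$ represents the exact embedding $\Sphper(-1,1)\hookrightarrow\Sphhper(-1,1)$. Written in symbols it reads
$$\widehat{k}_{p,\hn}^{(j)}=2\bigl(|\alpha_j|^{2}\widehat{k}_{p,\frac{\hn}{2}}^{(j)}+|\beta_j|^{2}\widehat{k}_{p,\frac{\hn}{2}}^{(j+\nh)}\bigr),$$
where $\alpha_j:=\widehat{p}_{p,\frac{\hn}{2}}^{(j,j)}$ and $\beta_j:=\widehat{p}_{p,\frac{\hn}{2}}^{(j+\nh,j)}$ are given in closed form by \eqref{eq:phatcoef}. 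Substituting this back into each $2\times 2$ block of $I-\widehat{P}_{p,\frac{\hn}{2}}\widehat{K}_{p,\hn}^{-1}(2\widehat{P}_{p,\frac{\hn}{2}}^{*})\widehat{K}_{p,\frac{\hn}{2}}$ makes both rows proportional to the vector $(\beta_j,-\alpha_j)$, so the block has rank one, and its weighted spectral norm is then simply the product of two explicit Euclidean lengths in $\mathbb{R}^{2}$.

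To complete the reduction, I use \eqref{eq:khatcoef} together with $\widehat{e}_{\frac{\hn}{2}}^{(i)}=0$ for $i\neq 0$ to write $\widehat{k}_{p,\frac{\hn}{2}}^{(i)}=|\widehat{d}_{\frac{\hn}{2}}^{(i)}|^{2}\widehat{m}_{p-1,\frac{\hn}{2}}^{(i)}$. The mass-matrix symbols from the outer weighting cancel, leaving a scalar expression depending only on the closed-form trigonometric quantities $|\alpha_j|^{2}$, $|\beta_j|^{2}$, $|\widehat{d}_{\frac{\hn}{2}}^{(\cdot)}|^{2}$ and $\widehat{k}_{p,\hn}^{(j)}$. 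The index $j=0$ is handled separately because $\widehat{e}_{\hn}^{(0)}=\hn$ contributes to $\widehat{k}_{p,\hn}^{(0)}$; but here $\alpha_0=\beta_0=1$ and the block degenerates, so the bound is immediate.

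The main anticipated obstacle is the remaining elementary trigonometric estimate showing that the resulting scalar is bounded by $\hn/\sqrt{2}$ uniformly in $j$ and in $p$. The uniformity in $p$ is delivered precisely by the rank-one collapse enforced by the Galerkin identity, which cancels the otherwise problematic high powers $\cos^{2(p+1)}$ and $\sin^{2(p+1)}$ appearing in $|\alpha_j|^{2}$ and $|\beta_j|^{2}$; without this cancellation any naive spectral bound would carry a $p$-dependent constant and the whole strategy would fail.
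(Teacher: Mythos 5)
Your overall strategy is the same as the paper's: Fourier conjugation, the $2\times2$ block structure coupling frequencies $j$ and $j+\nh$, the Galerkin identity $P_{p,\frac{\hn}{2}}^{T}K_{p,\frac{\hn}{2}}P_{p,\frac{\hn}{2}}=K_{p,\hn}$ forcing each block to have rank one, and a scalar bound per frequency. However, there is a genuine gap at the final step. Your claim that ``the mass-matrix symbols from the outer weighting cancel, leaving a scalar expression depending only on closed-form trigonometric quantities'' is false. The symbols $\widehat{m}_{p-1,\frac{\hn}{2}}^{(j)}$ cancel on the diagonal of the weighted block but not off the diagonal, and they also enter the denominator through $\widehat{k}_{p,\hn}^{(j)}=2\bigl(|\alpha_j|^{2}\widehat{k}_{p,\frac{\hn}{2}}^{(j)}+|\beta_j|^{2}\widehat{k}_{p,\frac{\hn}{2}}^{(j+\nh)}\bigr)$. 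If you carry out the rank-one norm computation you propose, the per-frequency scalar is (up to the grid-size factor) exactly the paper's
\begin{equation*}
\Psi_p(c,\xi)=\frac{(1+c)^{p-1}+(1-c)^{p-1}\xi}{2\bigl((1+c)^{p}+(1-c)^{p}\xi\bigr)},
\qquad
\xi:=\frac{\widehat{m}_{p-1,\frac{\hn}{2}}^{(l+\nh)}}{\widehat{m}_{p-1,\frac{\hn}{2}}^{(l)}},
\end{equation*}
and the ratio $\xi$ of mass symbols at the two aliased frequencies survives. There is no closed form for the mass symbol (the paper says so explicitly), so this is not an ``elementary trigonometric estimate.''

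More importantly, the bound $\Psi_p(c,\xi)\le\tfrac12$ is simply not true for arbitrary $\xi>0$: taking $c\to1$ with $\xi$ large gives $\Psi_p(c,\xi)\approx\tfrac{1}{2(1-c)}\to\infty$. The uniform-in-$p$ bound therefore does not follow from the rank-one collapse alone; it requires knowing that $\xi\le1$ whenever $c\ge0$ and $\xi\ge1$ whenever $c\le0$. This is exactly the content of Lemma~\ref{lem:mass:estim} (positivity of the mass symbol and its monotonicity as a function of $\cos(2jh\pi)$), whose proof is itself nontrivial (a recurrence for the Eulerian-number coefficients in a Chebyshev expansion). Your proposal omits this ingredient entirely, and without it the argument does not close. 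A minor additional slip: in the $j=0$ block one has $\beta_0=0$, not $\alpha_0=\beta_0=1$; the block evaluates to $\mathrm{diag}(0,\tfrac14)$, which is indeed harmless but not for the reason you give.
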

\begin{proof}
	The inequality~\eqref{eq:whattolfa4} is equivalent to
	\begin{equation*}
		\underbr{\hn^{-1} \|M_{p-1,\frac{h}{2}}^{1/2}(I-P_{p,\frac{h}{2}} K_{p,h}^{-1} P_{p,\frac{h}{2}}^T K_{p,\frac{h}{2}}) K_{p,\frac{h}{2}}^{-1/2}\|}{q:=}
					\le \frac{1}{\sqrt{2}} .
	\end{equation*}
	Using Galerkin orthogonality, we obtain $K_{p,h} = P_{p,\frac{h}{2}}^T K_{p,\frac{h}{2}}P_{p,\frac{h}{2}}$. Note that
	$\mathcal{H}:=I-P_{p,\frac{h}{2}} K_{p,h}^{-1} P_{p,\frac{h}{2}}^T K_{p,\frac{h}{2}}$ is a projection operator, so
	$\mathcal{H}\mathcal{H} = \mathcal{H}$. Moreover, observe that $\mathcal{H} K_{p,\frac{h}{2}}^{-1} = K_{p,\frac{h}{2}}^{-1}\mathcal{H}^T$.
	Using these identities and $\|W\|^2 = \rho(WW^T)$, where $\rho$ denotes the spectral radius,
	we obtain
	\begin{align*}\nonumber
		q^2 &= \hn^{-2}\rho( M_{p-1,\frac{h}{2}}^{-1/2}\mathcal{H} K_{p,\frac{h}{2}}^{-1}\mathcal{H}^T  M_{p-1,\frac{h}{2}}^{-1/2})
			= \hn^{-2}\rho( K_{p,\frac{h}{2}}^{-1}M_{p-1,\frac{h}{2}}^{-1}\mathcal{H} )
			 \\&= 
			\hn^{-2}\rho(
				K_{p,\frac{h}{2}}^{-1}M_{p-1,\frac{h}{2}} (I-P_{p,\frac{h}{2}} (P_{p,\frac{h}{2}}^TK_{p,\frac{h}{2}}P_{p,\frac{h}{2}})^{-1} P_{p,\frac{h}{2}}^T K_{p,\frac{h}{2}}) 
				).
	\end{align*}
	Using~\eqref{eq:mhat}, \eqref{eq:dhat}, \eqref{eq:dhat:star}, \eqref{eq:khat}, \eqref{eq:phat} and~\eqref{eq:phattranspose}, we obtain further
	\begin{align*}\nonumber
		q^2 = \hn^{-2}\rho(
				\underbr{\widehat{K}_{p,\frac{h}{2}}^{-1}\widehat{M}_{p-1,\frac{h}{2}}
				(I-2 \widehat{P}_{p,\frac{h}{2}} (2\widehat{P}_{p,\frac{h}{2}}^*\widehat{K}_{p,\frac{h}{2}}\widehat{P}_{p,\frac{h}{2}})^{-1}
					 \widehat{P}_{p,\frac{h}{2}}^* \widehat{K}_{p,\frac{h}{2}})
				}{ \widehat{T}_{p,\frac{h}{2}}:=}
				).
	\end{align*}
	Lemma~\ref{lem:mass:estim} states that all digonal entries of $\widehat{M}_{p-1,\frac{h}{2}}$ are non-zero.
	It is straight-forward to see that also the diagonal entries of
	$\widehat{K}_{p,\frac{h}{2}}$ and $\widehat{K}_{p,h}=\widehat{P}_{p,\frac{h}{2}}^*\widehat{K}_{p,\frac{h}{2}}\widehat{P}_{p,\frac{h}{2}}$
	are non-zero. So, $\widehat{T}_{p,\frac{h}{2}}$ is well-defined.
	
	Recall that
	Lemma~\ref{lem:phat} states that the matrix $\widehat{P}_{p,\frac{h}{2}}=(\widehat{p}_{p,\frac{h}{2}}^{(i,j)})_{i=0,\ldots,2\nh-1}^{j=0,\ldots,\nh-1}$
	has a block-structure, given by
	\begin{equation*}
			\widehat{p}_{p,\frac{h}{2}}^{(i,j)} = 0 \mbox{ for all } i-j \not\in\{0,\nh\}.
	\end{equation*}
	Therefore and because the matrices $\widehat{M}_{p-1,\frac{h}{2}}$ and
	$\widehat{K}_{p,\frac{h}{2}}$ are diagonal, 
	the matrix $\widehat{T}_{p,\frac{h}{2}} = (\widehat{t}_{p,\frac{h}{2}}^{(i,j)})_{i=0,\ldots,2\nh-1}^{j=0,\ldots,2\nh-1}$
	has a block-structure, given by
	\begin{equation*}
			\widehat{t}_{p,\frac{h}{2}}^{(i,j)} = 0 \mbox{ for all } i-j \not\in\{-\nh,0,\nh\}.
	\end{equation*}
	By reordering the coefficients of the matrix $\widehat{T}_{p,\frac{h}{2}}$, we obtain a block-diagonal matrix with
	blocks
	\begin{equation*}
			\mathcal{T}_{p,\frac{h}{2}}^{(l)} = \left(\begin{array}{cc}
					\widehat{t}_{p,\frac{h}{2}}^{(l,l)} & \widehat{t}_{p,\frac{h}{2}}^{(l,l+\nh)} \\
					\widehat{t}_{p,\frac{h}{2}}^{(l+\nh,l)} & \widehat{t}_{p,\frac{h}{2}}^{(l+\nh,l+\nh)}
			\end{array}\right).
	\end{equation*}
	As this block-diagonal matrix is spectrally equivalent to $\widehat{T}_{p,\frac{h}{2}}$ and the spectral radius
	of a block-diagonal matrix is just the maximum over the spectral radii of the blocks, we obtain
	\begin{equation*}
			q^2 = \rho(\widehat{T}_{p,\frac{h}{2}}) = \max_{l=0,\ldots,\nh-1} \rho( \mathcal{T}_{p,\frac{h}{2}}^{(l)} ).
	\end{equation*}
	So, in the following, we derive the spectral radius of $\mathcal{T}_{p,\frac{h}{2}}^{(l)}$ for any particular $l$. Straight-forward
	computation yields that for $l\in\{0,\ldots,\nh-1\}$, $i\in\{l,l+\nh\}$ and $j\in\{l,l+\nh\}$, we have
	\begin{equation}\label{eq:xxxy}
			\widehat{t}_{p,\frac{h}{2}}^{(i,j)} =
					\frac{\widehat{m}_{p-1,\frac{h}{2}}^{(i)}}{\widehat{k}_{p,\frac{h}{2}}^{(i)}}
					\left (\delta_{i,j} - \frac{\widehat{p}_{p,\frac{h}{2}}^{(i,l)}(\widehat{p}_{p,\frac{h}{2}}^{(j,l)})^* }{\sum_{r=0}^1
					(\widehat{p}_{p,\frac{h}{2}}^{(l+r\nh,l)})^* \widehat{k}_{p,\frac{h}{2}}^{(l+r\nh)}  \widehat{p}_{p,\frac{h}{2}}^{(l+r\nh,l)}
					} \widehat{k}_{p,\frac{h}{2}}^{(j)} \right),
	\end{equation}	
	where $\delta_{i,j}$ is the Kronecker-delta, i.e., $\delta_{i,j}=1$ for $i=j$ and $\delta_{i,j}=0$ for $i\not=j$.

	Now, consider \emph{case A}: $l\in\{1,\ldots,\nh-1\}$. Here, we plug the values of $\widehat{k}_{p,\frac{h}{2}}^{(j)}$,
	$\widehat{d}_{\frac{h}{2}}^{(j)}$, $\widehat{e}_{\frac{h}{2}}^{(j)}$ (which takes the value $0$ for $j\in\{l,l+\nh\}$),
	$\widehat{p}_{p,\frac{h}{2}}^{(i,j)}$, as given by \eqref{eq:khatcoef}, \eqref{eq:dhatcoef},
	\eqref{eq:ehatcoef} and~\eqref{eq:phatcoef}, into~\eqref{eq:xxxy} and substitute $\widehat{m}_{p-1,\frac{h}{2}}^{(l+n_h)}$ by
	$\xi \widehat{m}_{p-1,\frac{h}{2}}^{(l)}$. Doing so, the term $\widehat{m}_{p-1,\frac{h}{2}}^{(l)}$ cancels out
	and we obtain by straight-forward computation
	\begin{equation}\nonumber
			\mathcal{T}_{p,\frac{h}{2}}^{(l)} =
					\frac{1}{\delta}
					\left( 
							\begin{array}{c}
							 -z(1 - z)^{p-3} \xi
							 \\ z(1 + z)^{p-3}
							\end{array}
					\right)
					\left(
							\begin{array}{c}
									(-1)^{p}  (1 - z)^{p+1} \\  (1 + z)^{p+1}
							\end{array}
					\right)^T,
	\end{equation}	
	where $\delta:=(1 + z)^{2 p} + (-1)^p (1 - z)^{2 p} \xi$
	and $z:=\ee^{ 2 l \frac{h}{2} \pi\ii}$. Note that the computations are not a problem, as none of the symbols (except 
	$\widehat{e}_{\frac{h}{2}}^{(j)}$) takes the value $0$ for case A. Moreover, for case~A we have that $z\not\in\{-1,1\}$.
	
	Observe that $\mathcal{T}_{p,\frac{h}{2}}^{(l)}$ has rank $1$. Therefore, its spectral radius
	equals its trace, so we obtain by straight-forward computations
	that 
	\begin{align*}
			\rho( \mathcal{T}_{p,\frac{h}{2}}^{(l)} ) &=
					\frac{
								z (1+z)^{2p-2} - (-1)^p z (1-z)^{2p-2} \xi 
					}{
								(1+z)^{2p} + (-1)^p (1-z)^{2p} \xi
					} \\
					&=
					\frac{
								z^{-p+1} (1+2z+z^2)^{p-1} - (-1)^p z^{-p+1} (1-2z+z^2)^{p-1} \xi 
					}{
								z^{-p} (1+2z+z^2)^{p} + (-1)^p z^{-p} (1-2z+z^2)^{p} \xi
					} \\
					&=
					\frac{
								(z^{-1}+2+z)^{p-1} - (-1)^p (z^{-1}-2+z)^{p-1} \xi 
					}{
								(z^{-1}+2+z)^{p} + (-1)^p (z^{-1}-2+z)^{p} \xi
					} \\
					&=
					\frac{
								(2+2 c)^{p-1} - (-1)^p (-2+2 c)^{p-1} \xi 
					}{
								(2+2 c)^{p} + (-1)^p (-2+2 c)^{p} \xi
					}
					=
					\underbr{\frac{
								(1+ c)^{p-1} + (1- c)^{p-1} \xi 
					}{
							2 ( (1+ c)^{p} +  (1- c)^{p} \xi )
					}}{\Psi_p(c,\xi):=}
	\end{align*}
	holds, where $c:=\cos(2 l \frac{h}{2} \pi)$ and, as defined above,
	$\xi=\widehat{m}_{p-1,\frac{h}{2}}^{(l+\nh)}/\widehat{m}_{p-1,\frac{h}{2}}^{(l)}$. Note
	that $c\in(-1,1)$ holds as we have restricted ourselves to $l\in\{1,\ldots,\nh-1\}$.
	
	Observe that Lemma~\ref{lem:mass:estim} implies that $\xi>0$. Now, consider two cases:
	\begin{itemize}
			\item If $c=\cos(2l\frac{h}{2}\pi)> 0$, then
						$\cos(2(l+\nh)\frac{h}{2}\pi)=\cos(2l\frac{h}{2}\pi+\pi)\le 0$.
						For this case Lemma~\ref{lem:mass:estim} states that $\widehat{m}_{p-1,\frac{h}{2}}^{(l+\nh)}\le\widehat{m}_{p-1,\frac{h}{2}}^{(l)}$,
						so $\xi\le 1$ holds.
			\item Analogously, $\xi\ge 1$ holds if $c\le0$.
	\end{itemize}
	To finalize the proof of case~A, we need to show
	\begin{equation*}
			 \Psi_p\left(\cos\left(2l\frac{h}{2}\pi \right),\frac{\widehat{m}_{p-1,\frac{h}{2}}^{(l+\nh)}}{\widehat{m}_{p-1,\frac{h}{2}}^{(l)}} \right) \le \frac{1}{2}
	\end{equation*}
	for all $l=1,\ldots,\nh-1$. It suffices to show
	\begin{equation}\label{eq:cond1}
			 \Psi_p(c,\xi) \le \frac{1}{2}
	\end{equation}	
	for all $(c,\xi) \in [0,1)\times(0,1]\cup (-1,0] \times[1,\infty)$ and all $p \in \mathbb{N}$, i.e., to show the
	inequality for the whole range of $c$ and $\xi$, ignoring their dependence on $l$.
	As a next step, we observe that $\Psi_p(c,\xi)= \Psi_p(-c,\xi^{-1})$, which indicates that it suffices to
	show~\eqref{eq:cond1}
	for all $(c,\xi) \in [0,1)\times(0,1]$ and all $p \in \mathbb{N}$. We observe that
	\begin{equation}\nonumber
				\Psi_p(c,\xi) = \frac{
								1 + \left(\frac{1-c}{1+ c}\right)^{p-1} \xi 
					}{
							2 \left( (1+ c) +  (1-c)\left(\frac{1-c}{1+ c}\right)^{p-1}  \xi \right)
					}
	\end{equation}
	and
	%\begin{equation*}
			$ \omega :=\left(\frac{1-c}{1+c}\right)^{p-1} \in [0,1]$ for $c\in[0,1]$.
	%\end{equation*}
	So, it suffices to show that
	\begin{equation}\label{eq:cond2}
			  \frac{
								1 + \omega \xi 
					}{
							2 ( (1+ c) +  (1-c) \omega  \xi )
					} \le \frac12
	\end{equation}
	for all $(c,\xi,\omega)\in [0,1)\times(0,1]\times[0,1]$ and all $p \in \mathbb{N}$, again
	ignoring the dependence of $\omega$ on $p$ and $c$. 

	As the denominator is always positive,~\eqref{eq:cond2} is equivalent to
	\begin{equation*}
				 1+ \omega \xi \le 1+\omega\xi + c (1-  \omega  \xi),
	\end{equation*}
	which is obviously true for all $(c,\xi,\omega)\in [0,1)\times(0,1]\times[0,1]$. 
	
	Now, we consider \emph{case B}: $l = 0$. Here, we have to use that 
	$\widehat{e}_{p,\frac{h}{2}}^{(0)}\not=0$ and obtain -- by straight-forward computation -- that
	\begin{equation*}
			\mathcal{T}_{p,\frac{h}{2}}^{(0)} = \left(
					\begin{array}{cc} 0&0\\0&\tfrac14\end{array}
			\right)
	\end{equation*}
	and consequently
	$\rho( \mathcal{T}_{p,\frac{h}{2}}^{(0)} ) = \tfrac14$. Also this is bounded from above by $\tfrac12$,
	which finishes the proof.
	\qed
\end{proof}

\subsection{The approximation error estimate}

Now, we are able to show the approximation error estimate~\eqref{eq:whattolfa}.
\begin{lemma}\label{lem:lfa}
		The inequality~\eqref{eq:whattolfa}, i.e.,
		\begin{equation}\nonumber
		    	\|(I- \widehat{\Pi}_{p,\hn} ) \uphh\|_{L^2(-1,1)} \le \frac{1}{\sqrt{2}}\; \hn |\uphh|_{H^1(-1,1)},
		\end{equation}
		holds for all $\uphh\in \Sphhper(-1,1)$.
\end{lemma}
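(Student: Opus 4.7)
The plan is simply to assemble the three ingredients already built up in this section. Lemma~\ref{lemma:decomp} has done the heavy lifting on the analytic side, translating the $L^2$--$H^1$ estimate~\eqref{eq:whattolfa} into an equivalent matrix-norm statement~\eqref{eq:whattolfa2}, and further observing that~\eqref{eq:whattolfa2} follows from the two sub-inequalities~\eqref{eq:whattolfa3} and~\eqref{eq:whattolfa4} by sub-multiplicativity. The estimate~\eqref{eq:whattolfa4} is exactly what Lemma~\ref{lem:lfa0} established via the Fourier-diagonalization argument in the previous subsection. Hence the only remaining task is to verify~\eqref{eq:whattolfa3}.

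For~\eqref{eq:whattolfa3}, I would argue as follows. Lemma~\ref{lem:mass} states $\|\ul{u}_h\|_{M_{p,h}} \le \|\ul{u}_h\|_{M_{p-1,h}}$ for every coefficient vector, which applied at grid size $h/2$ is nothing but the Loewner inequality $M_{p,h/2} \le M_{p-1,h/2}$ between symmetric positive definite matrices. Conjugating by $M_{p-1,h/2}^{-1/2}$ gives $M_{p-1,h/2}^{-1/2} M_{p,h/2} M_{p-1,h/2}^{-1/2} \le I$, whose spectral radius is therefore at most $1$. Using the standard identity $\|A^{1/2} B^{-1/2}\|^2 = \rho(B^{-1/2} A B^{-1/2})$ for symmetric positive definite $A,B$ yields $\|M_{p,h/2}^{1/2} M_{p-1,h/2}^{-1/2}\| \le 1$, which is~\eqref{eq:whattolfa3}.

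Combining~\eqref{eq:whattolfa3} and~\eqref{eq:whattolfa4} through sub-multiplicativity, as already spelled out in Lemma~\ref{lemma:decomp}, yields~\eqref{eq:whattolfa2}, and hence the equivalent bound~\eqref{eq:whattolfa} which is the assertion of the lemma. There is no real obstacle at this stage; the only subtlety is the standard conversion from a pointwise energy bound to a matrix-norm bound in the reduction from Lemma~\ref{lem:mass} to~\eqref{eq:whattolfa3}, and all the genuinely hard work has already been discharged in Lemma~\ref{lem:lfa0}.
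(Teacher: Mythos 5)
Your proposal is correct and follows exactly the same route as the paper: Lemma~\ref{lemma:decomp} reduces~\eqref{eq:whattolfa} to~\eqref{eq:whattolfa3} and~\eqref{eq:whattolfa4}, which are supplied by Lemma~\ref{lem:mass} and Lemma~\ref{lem:lfa0}, respectively. The only difference is that you spell out the (standard) passage from the quadratic-form inequality of Lemma~\ref{lem:mass} to the matrix-norm bound~\eqref{eq:whattolfa3}, which the paper leaves implicit; your argument for that step is sound.
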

\begin{proof}
	Lemma~\ref{lemma:decomp} states that~\eqref{eq:whattolfa} is a consequence of
	\eqref{eq:whattolfa3} and \eqref{eq:whattolfa4}. As 
	Lemma~\ref{lem:mass} shows~\eqref{eq:whattolfa3} and
	Lemma~\ref{lem:lfa0} shows~\eqref{eq:whattolfa4}, this finishes the proof.\qed
\end{proof}

\section{The proof of Theorem~\citeThrmApprox{}}\label{sec:thrm1}

In the previous section, we have given a proof for the approximation error of discretized
functions between two consecutive grids. Using a telescoping argument, 
we can extend this result to an approximation error estimate for general functions. As in the last
section, we first consider the periodic case.
\begin{lemma}\label{lem:approx:per}
    For all $u \in \widehat{H}^{1}(-1,1)$, all grid sizes $h$ and each $p\in \mathbb{N}$, with $hp<1$, 
    \begin{equation*}
	    \|(I-\widehat{\Pi}_{p,h})u\|_{L^2(-1,1)} \le \sqrt{2}\; \hn |u|_{H^1(-1,1)}
    \end{equation*}
    is satisfied, where $\widehat{\Pi}_{p,h}$ is given as in Definition~\ref{def:H1projection}.
\end{lemma}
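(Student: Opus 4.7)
The plan is to prove the lemma by a telescoping argument along the dyadic grid hierarchy $h_i := 2^{-i} h$ for $i=0,1,2,\ldots$, with $u_i := \widehat{\Pi}_{p,h_i} u$. Since the maximally smooth spline spaces on dyadically refined uniform grids are nested, $\widehat{S}_{p,h_i}(-1,1)\subseteq \widehat{S}_{p,h_{i+1}}(-1,1)$, the tower property of orthogonal projections yields $\widehat{\Pi}_{p,h_i} u_{i+1} = u_i$, hence $u_{i+1}-u_i = (I-\widehat{\Pi}_{p,h_i}) u_{i+1}$ with $u_{i+1}\in \widehat{S}_{p,h_{i+1}}(-1,1)$. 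This is exactly the hypothesis of Lemma~\ref{lem:lfa} applied at coarse grid size $h_i$ (the assumption $h_i p < 1$ is inherited from $hp < 1$), giving
\begin{equation*}
    \|u_{i+1}-u_i\|_{L^2(-1,1)} \le \tfrac{1}{\sqrt{2}}\, h_i\, |u_{i+1}|_{H^1(-1,1)}.
\end{equation*}
To replace $|u_{i+1}|_{H^1(-1,1)}$ by $|u|_{H^1(-1,1)}$, I first observe that testing~\eqref{eq:def:projection} with the constant function $v\equiv 1 \in \widehat{S}_{p,h_i}(-1,1)$ forces $\int_{-1}^{1}(u-u_i)\,\dd x = 0$, while testing with $v=u_i$ together with Cauchy-Schwarz gives $\|u_i\|_{\Ho(-1,1)} \le \|u\|_{\Ho(-1,1)}$; since $\|\cdot\|_{\Ho(-1,1)}^2 = |\cdot|_{H^1(-1,1)}^2 + \left(\int_{-1}^{1}\cdot\,\dd x\right)^2$ and the means agree, this yields the $H^1$-seminorm stability $|u_{i+1}|_{H^1(-1,1)}\le |u|_{H^1(-1,1)}$ along the whole hierarchy.

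Combining the two previous bounds with the triangle inequality along the telescoping decomposition
\begin{equation*}
    \|u - u_0\|_{L^2(-1,1)} \le \|u - u_L\|_{L^2(-1,1)} + \sum_{i=0}^{L-1}\|u_{i+1}-u_i\|_{L^2(-1,1)}
\end{equation*}
and summing the geometric series $\sum_{i=0}^{\infty} 2^{-i} h = 2h$ produces
\begin{equation*}
    \|u - u_0\|_{L^2(-1,1)} \le \|u - u_L\|_{L^2(-1,1)} + \sqrt{2}\, h\, |u|_{H^1(-1,1)}.
\end{equation*}
This already matches the claimed constant from the telescoping contribution alone, so it only remains to prove that the residual on the finest level vanishes as $L\to\infty$ for fixed $p$.

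This tail term is the main obstacle, because Lemma~\ref{lem:non:robust} bounds only a particular quasi-interpolant with a $p$-dependent constant, not the Galerkin projection $u_L$. I would handle it by a standard density argument for Galerkin projections: given $\varepsilon>0$, approximate $u$ in the $\Ho(-1,1)$-norm by a smooth periodic $\tilde u$ with error less than $\varepsilon$; apply Lemma~\ref{lem:non:robust} to $\tilde u$ with indices $(r,q)=(0,2)$ and $(1,2)$ to produce $\tilde v_L \in \widehat{S}_{p,h_L}(-1,1)$ with $\|\tilde u - \tilde v_L\|_{\Ho(-1,1)} \to 0$ as $L\to\infty$; minimality of $u_L$ in the $\Ho$-norm together with the triangle inequality then gives $\|u - u_L\|_{\Ho(-1,1)} \le \|u-\tilde u\|_{\Ho(-1,1)} + \|\tilde u - \tilde v_L\|_{\Ho(-1,1)} < 2\varepsilon$ for $L$ large. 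Since $u-u_L$ has zero mean, Poincar\'e-Wirtinger on $(-1,1)$ finally yields $\|u-u_L\|_{L^2(-1,1)} \le C\,|u-u_L|_{H^1(-1,1)} = C\,\|u-u_L\|_{\Ho(-1,1)} < 2 C\varepsilon$, and the arbitrariness of $\varepsilon$ closes the argument.
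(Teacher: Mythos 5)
Your proposal is correct, and its core is the same as the paper's: a dyadic telescoping decomposition driven by the tower property $\widehat{\Pi}_{p,2h}\widehat{\Pi}_{p,h}=\widehat{\Pi}_{p,2h}$, the two-grid estimate of Lemma~\ref{lem:lfa} applied to each increment, the $H^1$-seminorm stability of the projection, and the geometric series giving the factor $\sqrt{2}\,\hn$. (Your careful derivation of $|u_{i+1}|_{H^1}\le|u|_{H^1}$ from the $\Ho$-product, by first matching the means via the constant test function, is more explicit than the paper's one-line appeal to $H^1$-orthogonality, and it is the right way to justify that step given the extra mean-value term in $(\cdot,\cdot)_{\Ho}$.) The one place you genuinely diverge is the tail term $\|u-u_L\|_{L^2(-1,1)}$: the paper combines Lemma~\ref{lem:non:robust} (with $r=1$, $q=2$) with an Aubin--Nitsche duality argument to obtain the quantitative bound $\widetilde{C}(p)\,2^{-q}h\,\|u\|_{H^1(-1,1)}$, which vanishes as $q\to\infty$ since $\widetilde{C}(p)$ is fixed; you instead use a qualitative density argument (smooth periodic $\tilde u$, the quasi-interpolant from Lemma~\ref{lem:non:robust} as a competitor, best-approximation of $u_L$ in the $\Ho$-norm, and Poincar\'e--Wirtinger to pass back to $L^2$). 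Both are valid, since all that is needed is that the tail vanishes for fixed $p$; the paper's route yields a rate at the cost of invoking a duality lemma, while yours is more elementary but relies on density of smooth periodic functions in $\widehat{H}^1(-1,1)$ and on the fact that the single quasi-interpolant in Lemma~\ref{lem:non:robust} serves simultaneously for $r=0$ and $r=1$ (it does, as it is independent of $r$).
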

\begin{proof}	
  	Using a telescoping argument, i.e. iteratively applying the triangular inequality, and the relation $\widehat{\Pi}_{p,2h} \widehat{\Pi}_{p,h} = \widehat{\Pi}_{p,2h}$ for the projectors, we obtain for any $q\in\mathbb{N}$
  	\begin{align*}
	    \|(I-\widehat{\Pi}_{p,h})u\|_{L^2(-1,1)} & \le \|(I-\widehat{\Pi}_{p,2^{-q}h})u \|_{L^2(-1,1)} \\
	    				&\qquad+ \sum_{l=0}^{q-1} \|(I-\widehat{\Pi}_{p,2^{-l}h} )\widehat{\Pi}_{p,2^{-l-1}h} u \|_{L^2(-1,1)}.
    \end{align*}
    We use Lemma~\ref{lem:non:robust} and a standard Aubin-Nitsche duality argument
    to estimate $\|(I-\widehat{\Pi}_{p,2^{-q}h})u\|_{L^2(-1,1)}$ from above.
    Using \cite{Braess:1997}, Lemma~7.6, and Lemma~\ref{lem:non:robust} for $r=1$ and $q=2$, we immediately
    obtain
    \begin{equation}\label{eq:aubin} 
    		\|(I-\widehat{\Pi}_{p,2^{-q}h})u\|_{L^2(-1,1)} \le \widetilde{C}(p) 2^{-q} h \|u\|_{H^1(-1,1)},
    \end{equation}
    where $\widetilde{C}(p)$ is independent of the grid size. 
  	Using~\eqref{eq:aubin} and Lemma~\ref{lem:lfa}, we obtain
  	\begin{align*}
	    \|(I-\widehat{\Pi}_{p,h})u \|_{L^2(-1,1)} &\le \widetilde{C}(p) \; 2^{-q}\hn  \|u \|_{H^1(-1,1)} \\
	    							&\qquad + \sum_{l=0}^{q-1}	\frac{1}{\sqrt{2}} \;2^{-l} \hn  |\widehat{\Pi}_{p,2^{-l-1}h} u|_{H^1(-1,1)}.
    \end{align*}
    Because $\widehat{\Pi}_{p,h}$ is $H^1$-orthogonal, we obtain 
    $|\widehat{\Pi}_{p,2^{-l-1}h} u|_{H^1(-1,1)} \leq |u |_{H^1(-1,1)}$ and further
    \begin{equation*}
	    \|(I-\widehat{\Pi}_{p,h})u \|_{L^2(-1,1)}  \le \widetilde{C}(p) \; 2^{-q} \hn  \|u \|_{H^1(-1,1)} + \sum_{l=0}^{q-1} 
	    																						\frac{1}{\sqrt{2}} \;2^{-l} \hn  |u |_{H^1(-1,1)}.
    \end{equation*}
  	The summation formula for the infinite geometric series gives
  	\begin{equation*}
	    \|(I-\widehat{\Pi}_{p,h})u \|_{L^2(-1,1)}  \le \widetilde{C}(p) \; 2^{-q} \hn  \|u \|_{H^1(-1,1)} + 
	    																						2 \frac{1}{\sqrt{2}} \hn |u|_{H^1(-1,1)}.
    \end{equation*}
    As this is true for all $q\in \mathbb{N}$, we can take the limit $q\rightarrow \infty$ and obtain the desired
    result.\qed
\end{proof}

Having this result, we note that
Theorem~\ref{thrm:approx} is just the extension of Lemma~\ref{lem:approx:per} to
the non-periodic case. So, we can easily prove Theorem~\ref{thrm:approx}.
\begin{proof}{\em of Theorem~\ref{thrm:approx}}
		In the following, we assume without loss of generality that $\OmegaAB=(0,1)$. The extension
		to any other $\OmegaAB=(a,b)$, follows using a standard scaling argument.

		Observe that any $u\in H^{1}(0,1)$ can be extended to a $w\in \widehat{H}^1(-1,1)$
		by defining $w(x):=u(|x|)$. The assumption $hp<1$ in Theorem~\ref{thrm:approx} guarantees that 
		Lemma~\ref{lem:approx:per} can be applied. 
		We set $\wph:= \widehat{\Pi}_{p,h} w \in \Sphper(-1,1)$ as in 
		Lemma~\ref{lem:approx:per}, such that 
    \begin{equation*}
	    \|w-\wph\|_{L^2(-1,1)} \le \sqrt{2}\; \hn |w|_{H^1(-1,1)}.
    \end{equation*}
    The function $\wph$ is symmetric, i.e., $\wph(x)=\wph(-x)$ holds. This can be seen by
    the following argument: As $w$ satisfies $w(x)=w(-x)$, we have for $\widetilde{w}_{p,h}(x):=\wph(-x)$
    \begin{equation*}
    		\|w-\wph\|_{L^2(-1,1)} = \|w-\widetilde{w}_{p,h}\|_{L^2(-1,1)}
    \end{equation*}
		and as $\wph$ was a unique minimizer, consequently $\wph(x) = \widetilde{w}_{p,h}(x)=\wph(-x)$ holds.
    By restricting $\wph$ to $(0,1)$, we obtain a function $\uph\in \Sph(0,1)$. 
    This function satisfies the desired approximation error estimate since  
    $|w|_{H^1(-1,1)} = \sqrt{2} |u|_{H^1(0,1)}$ and 
    $\|w-\wph\|_{L^2(-1,1)} = \sqrt{2} \|u-~\uph\|_{L^2(0,1)}$ hold due to
    the symmetry of $w$.\qed
\end{proof}

\section{Approximation error estimate for a reduced spline space}\label{sec:reduced}

In the proof of Theorem~\ref{thrm:approx} we have defined $\uph$ to be the restriction of a symmetric and
periodic spline $\wph \in \Sphper(-1,1)$ to $(0,1)$. So, we know more about $\uph$ than just $\uph\in\Sph(0,1)$.
Throughout this Section we again assume $hp<|\Omega|$.

As we have shown in the proof of Theorem~\ref{thrm:approx} the spline~$\wph$ is symmetric, 
i.e., $\wph(x)=\wph(-x)$, so we have
\begin{equation*}
			\frac{\partial^{l}}{\partial x^{l}} \wph(x)= (-1)^l \frac{\partial^{l}}{\partial x^{l}} \wph(-x)
			\mbox{ for all } l \in \mathbb{N}_0 .
\end{equation*}
By plugging~$x=0$ into this relation, we obtain that all odd derivatives vanish
for~$x=0$. By plugging~$x=1$ into the relation, we obtain together with~\eqref{eq:sym:cond}
that also for~$x=1$ all odd derivatives vanish.

So, we have shown that the approximation error estimate~\eqref{eq:thrm:approx} is still satisfied
if we restrict the approximating spline $\uph$ to be in the space $\tiSph(0,1)$, defined
as follows.

\begin{definition}\label{defi:Ssymm}
		Given a spline space $\Sph(\OmegaAB)$ over $\OmegaAB=(a,b)$, the \emph{space of splines 
		with vanishing odd derivatives} $\tiSph(\OmegaAB)$ is the space of all $\uph\in \Sph(\OmegaAB)$
		that satisfy the following condition:
		\begin{equation*}
						\frac{\partial^{2l+1}}{\partial x^{2l+1}} \uph(a)=\frac{\partial^{2l+1}}{\partial x^{2l+1}} \uph(b) = 0
						\mbox{ for all } l \in \mathbb{N}_0 \mbox{ with } 2l+1 < p.
		\end{equation*}
\end{definition}

Using a standard scaling argument, we can again extend the result for $\OmegaAB=(0,1)$
to any $\OmegaAB=(a,b)$ and obtain the following Corollary.

\begin{corollary}\label{cor:approx:nonper}
    For all $u\in H^1(\OmegaAB)$, all grid sizes $h$ and all $p\in\mathbb{N}$, with $hp<|\Omega|$,
    there is a spline approximation $\uph\in \tiSph(\OmegaAB)$ such that
    \begin{equation*}
			    	\|u-\uph\|_{L^2(\OmegaAB)} \le \sqrt{2}\; \hn |u|_{H^1(\OmegaAB)}
    \end{equation*}
    is satisfied.
\end{corollary}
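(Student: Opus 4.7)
My plan is to use the construction from the proof of Theorem~\ref{thrm:approx} essentially verbatim and then extract the additional information that the discussion preceding the corollary makes available. First I would reduce to the reference interval $\OmegaAB=(0,1)$ by a standard scaling argument, since $\tiSph$, $\|\cdot\|_{L^2}$ and $|\cdot|_{H^1}$ transform in exactly the same way as in the proof of Theorem~\ref{thrm:approx}; once the claim is established on $(0,1)$, the same affine rescaling delivers it on an arbitrary $(a,b)$ without affecting any constant.

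On $(0,1)$ I would take the function $\wph\in\Sphper(-1,1)$ constructed in the proof of Theorem~\ref{thrm:approx} as the $H^1_{\circ}$-orthogonal projection onto $\Sphper(-1,1)$ of the even extension $w(x):=u(|x|)$ of $u$, and define $\uph$ as its restriction to $(0,1)$. By the proof of Theorem~\ref{thrm:approx}, the uniqueness of the minimizer forces $\wph$ to inherit the reflection symmetry of $w$, i.e. $\wph(x)=\wph(-x)$ on $(-1,1)$, and the desired estimate $\|u-\uph\|_{L^2(0,1)}\le\sqrt{2}\,h\,|u|_{H^1(0,1)}$ is already in hand. It remains only to verify that $\uph\in\tiSph(0,1)$, i.e. that all odd derivatives of $\uph$ of order below $p$ vanish at both endpoints $0$ and $1$.

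The key step, which I expect to be the only one requiring any care, is to combine symmetry with the periodicity condition~\eqref{eq:sym:cond} enjoyed by $\wph$ as an element of $\Sphper(-1,1)$. Differentiating the symmetry identity $\wph(x)=\wph(-x)$ termwise yields
\begin{equation*}
	\frac{\partial^{l}}{\partial x^{l}}\wph(x)=(-1)^{l}\,\frac{\partial^{l}}{\partial x^{l}}\wph(-x)\qquad\mbox{for all } l\in\mathbb{N}_{0}.
\end{equation*}
Setting $x=0$ immediately gives $\frac{\partial^{l}}{\partial x^{l}}\wph(0)=0$ for every odd $l<p$, so the odd-derivative condition at the left endpoint $a=0$ of $(0,1)$ is satisfied. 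At the right endpoint, setting $x=1$ in the same identity gives $\frac{\partial^{l}}{\partial x^{l}}\wph(1)=(-1)^{l}\frac{\partial^{l}}{\partial x^{l}}\wph(-1)$, and the periodicity condition~\eqref{eq:sym:cond} for $\wph\in\Sphper(-1,1)$ forces $\frac{\partial^{l}}{\partial x^{l}}\wph(-1)=\frac{\partial^{l}}{\partial x^{l}}\wph(1)$ for all $l<p$; for odd $l$ these two identities together give $\frac{\partial^{l}}{\partial x^{l}}\wph(1)=-\frac{\partial^{l}}{\partial x^{l}}\wph(1)$ and hence this derivative vanishes.

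Since the restriction of $\wph$ to $(0,1)$ is a spline on the uniform grid on $(0,1)$ and all odd derivatives up to order $p-1$ vanish at both $0$ and $1$, the resulting $\uph$ lies in $\tiSph(0,1)$ by Definition~\ref{defi:Ssymm}. Together with the $L^2$-estimate already inherited from Theorem~\ref{thrm:approx} and the scaling step, this completes the proof. No new estimates beyond those in Lemma~\ref{lem:approx:per} are needed; the corollary is entirely a structural refinement obtained by reading off the symmetry of the minimizer.
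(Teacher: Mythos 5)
Your proposal is correct and follows essentially the same route as the paper: restrict to $(0,1)$, take $\uph$ as the restriction of the symmetric periodic minimizer $\wph$ from the proof of Theorem~\ref{thrm:approx}, differentiate the symmetry identity $\wph(x)=\wph(-x)$, and combine the evaluation at $x=1$ with the periodicity condition~\eqref{eq:sym:cond} to kill the odd derivatives at both endpoints. The paper's own argument in Section~\ref{sec:reduced} is exactly this, including the final scaling step to a general interval.
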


In the Appendix, we will introduce a basis for the space~$\tiSph(\OmegaAB)$. Based on the bases
of those spaces, we obtain that their dimensions are as given in Table~\ref{tab:dof}.
\begin{table}
	\begin{center}
		\begin{tabular}{cccc}
				\hline\noalign{\smallskip}
										 & dim $\Sph(0,1)$ & dim $\Sphper(0,1)$ & dim $\tiSph(0,1)$ \\
			  \noalign{\smallskip}\hline\noalign{\smallskip}
				$p$ even & $n+p$               & $n$                       & $n$  \\
				$p$ odd  & $n+p$               & $n$                       & $n+1$    \\
				\noalign{\smallskip}\hline
		\end{tabular}
		\caption{Degrees of freedom, where $n$ is the number of elements in $(0,1)$.}
		\label{tab:dof}
	\end{center}
\end{table}

\section{An inverse inequality for the reduced spline space}\label{sec:inverse}

For the space~$\tiSph(\OmegaAB)$, a robust inverse inequality holds. Note that an
extension to~$\Sph(\OmegaAB)$ is not possible (cf. Remark~\ref{rem:counterexample}).

\begin{theorem}\label{thrm:inverse}
	For all grid sizes $h$ and each $p\in \mathbb{N}$,
	\begin{equation}\label{eq:inv2}
		|\uph|_{H^1(\OmegaAB)} \le 2 \sqrt{3} \hn^{-1} \|\uph\|_{L^2(\OmegaAB)}
	\end{equation}
	is satisfied for all $\uph\in \tiSph(\OmegaAB)$.
\end{theorem}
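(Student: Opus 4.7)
The plan is to reduce the claim to a matrix-level inverse inequality in the periodic setting and then apply the Fourier diagonalization developed in Section~\ref{sec:twogrid}. By a scaling argument it suffices to treat $\OmegaAB=(0,1)$. For $\uph\in\tiSph(0,1)$, form the even extension $\wph(x):=\uph(|x|)$ on $(-1,1)$. Vanishing of all odd derivatives up to order $p-1$ at $x=0$ makes $\wph$ a $C^{p-1}$-function across the origin, while the same conditions at $x=1$ combined with evenness force the periodicity condition at $\pm 1$; thus $\wph\in\Sphper(-1,1)$. A direct computation using $\wph'(x)=\mathrm{sgn}(x)\,\uph'(|x|)$ yields $\|\wph\|_{L^2(-1,1)}^2=2\|\uph\|_{L^2(0,1)}^2$ and $|\wph|_{H^1(-1,1)}^2=2|\uph|_{H^1(0,1)}^2$, so it suffices to establish the inverse inequality on all of $\Sphper(-1,1)$.

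Expanding a periodic spline as $\wph = \sum_i w^{(i)} \bsplper^{(i)}$ and using the identity $K_{p,h}-E_h=D_h M_{p-1,h} D_h^T$ from~\eqref{eq:k:decomp}, the target becomes
\begin{equation*}
  \ul{w}^{T} D_h M_{p-1,h} D_h^T \ul{w} \;\le\; 12\, h^{-2}\, \ul{w}^{T} M_{p,h}\, \ul{w}
\end{equation*}
for all $\ul{w}\in\mathbb{R}^{\nh}$. All three matrices are circulant and hence simultaneously diagonalized by the Fourier basis of Subsection~\ref{subsec:symbols}; using $|\widehat{d}_h^{(j)}|^2 = 4 h^{-2}\sin^2(jh\pi)$, the problem collapses to the scalar mode-wise estimate $\sin^2(jh\pi)\,\widehat{m}_{p-1,h}^{(j)} \le 3\,\widehat{m}_{p,h}^{(j)}$ for every $j$. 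With the parametrization $c=1+\cos(2jh\pi)\in[0,2]$ and the polynomial $g_p$ from the proof of Lemma~\ref{lem:mass:estim} (for which $h\,g_p(c) = \widehat{m}_{p,h}^{(j)}$), this is equivalent to the single polynomial inequality
\begin{equation*}
  (2-c)\, g_{p-1}(c) \;\le\; 6\, g_p(c), \qquad c\in[0,2],\; p\in\mathbb{N}.
\end{equation*}

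The verification of this polynomial inequality is the main obstacle. The base case $p=1$ is immediate since $g_0\equiv 1$ and $g_1(c)=(1+c)/3$ give $6g_1(c)-(2-c)g_0(c)=3c\ge 0$; equality at $c=0$ already pins the constant $2\sqrt{3}$ as sharp, consistent with Corollary~\ref{corr:sharp2}. For the inductive step I would substitute the differential recurrence for $g_p$ from the proof of Lemma~\ref{lem:mass:estim} into $6g_p-(2-c)g_{p-1}$ and exploit positivity of $g_{p-1}$, $g_{p-1}'$, and $g_{p-1}''$ on $[0,2]$, which follows from the coefficient positivity of the $a_{p-1,j}$ established there. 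The delicate point is that the coefficient multiplying $g_{p-1}$ in the resulting linear combination turns negative for small $c$ once $p\ge 2$, so that term must be absorbed into the other two---either via the inductive hypothesis or via an auxiliary comparison between $g_{p-1}$ and $c\,g_{p-1}''$ on $[0,2]$---and this balancing is the step where the real work lies.
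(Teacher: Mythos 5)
Your reduction is sound as far as it goes: the even extension to $\Sphper(-1,1)$, the identity $\ul{w}^T D_h M_{p-1,h}D_h^T\ul{w}=|\wph|_{H^1(-1,1)}^2$ from~\eqref{eq:k:decomp}, the simultaneous diagonalization by the Fourier basis, and the resulting mode-wise target $\sin^2(jh\pi)\,\widehat{m}_{p-1,h}^{(j)}\le 3\,\widehat{m}_{p,h}^{(j)}$, i.e.\ $(2-c)\,g_{p-1}(c)\le 6\,g_p(c)$ on $[0,2]$, are all correct (and I verified the cases $p=1,2$ by hand). But the proof stops exactly at the point that matters: the inequality $(2-c)g_{p-1}\le 6g_p$ for general $p$ is asserted, not proved. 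You correctly observe that substituting the differential recurrence for $g_p$ produces a coefficient $\tfrac{6(1+cp)}{1+2p}-(2-c)$ in front of $g_{p-1}$ that is negative near $c=0$ for every $p\ge 2$, so coefficient positivity of the $a_{p-1,j}$ alone cannot close the argument, and you do not supply the ``auxiliary comparison'' or inductive mechanism that would absorb this negative term. As written, this is a genuine gap, not a routine verification: the whole content of the theorem has been repackaged into that one polynomial inequality, and it remains open in your write-up.

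For contrast, the paper avoids the symbol of the mass matrix entirely. It proves~\eqref{eq:inv2} on $\Sphper(-1,1)$ by induction in $p$: integration by parts and Cauchy--Schwarz give $|\wph|_{H^1}^2\le\|\wph''\|_{L^2}\|\wph\|_{L^2}=|\wph'|_{H^1}\|\wph\|_{L^2}$, and since $\wph'\in\widehat{S}_{p-1,h}(-1,1)$ the induction hypothesis bounds $|\wph'|_{H^1}\le 2\sqrt3\,h^{-1}|\wph|_{H^1}$; dividing by $|\wph|_{H^1}$ shows the constant does not grow with $p$, with the base case $p=1$ taken from the literature. That argument is three lines, works for non-uniform grids, and sidesteps your polynomial inequality altogether. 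If you want to salvage your route, note that your target inequality $(2-c)g_{p-1}(c)\le 6 g_p(c)$ is in fact a scalar shadow of the same structure --- it relates the symbol of degree $p$ to that of degree $p-1$ through the factor $(2-c)/2=\sin^2(jh\pi)$, which is the symbol-level trace of differentiation --- so the cleanest completion is probably to prove it by mimicking the integration-by-parts step at the symbol level rather than by manipulating the $a_{p,j}$ recurrence directly.
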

\begin{proof}
		In the following, we assume without loss of generality that $\OmegaAB=(0,1)$. The extension
		to any other $\OmegaAB=(a,b)$, follows directly using a standard scaling argument.
		We can extend every $\uph\in\tiSph(0,1)$ to $(-1,1)$ by defining $\wph(x):=\uph(|x|)$
		and obtain $\wph \in \Sphper(-1,1)$. \eqref{eq:inv2} is equivalent to
		\begin{equation}\label{eq:inv2aa}
			|\wph|_{H^1(-1,1)} \le 2 \sqrt{3} \hn^{-1} \|\wph\|_{L^2(-1,1)}.
		\end{equation}
		This is shown	using induction in $p$ for all $u\in \tiSph(-1,1)$.
		For $p=1$, \eqref{eq:inv2aa} is known, cf.~\cite{schwab:1998}, Theorem~3.91.
		
		Now, we show that the constant does not increase for larger $p$. So assume $p>1$ to be fixed.
		Due to the periodicity and due to the Cauchy-Schwarz inequality,
		\begin{align*}
		        |\wph|_{H^1(-1,1)}^2 &= \int_{-1}^1 (\wph')^2 dx = -\int_{-1}^1 \wph'' \wph dx \\
													&\le \|\wph''\|_{L^2(-1,1)} \|\wph\|_{L^2(-1,1)}= |\wph'|_{H^1(-1,1)} \|\wph\|_{L^2(-1,1)}
		\end{align*}
		is satisfied.
		Using the induction assumption (and~$\wph'\in \widehat{S}_{p-1,h}(-1,1)$, cf. \cite{Schumaker:1981}, Theorem~5.9), we know that
		\begin{equation*}
				|\wph'|_{H^1(-1,1)} \le 2 \sqrt{3} \hn^{-1} \|\wph'\|_{L^2(-1,1)} = 2 \sqrt{3} \hn^{-1} |\wph|_{H^1(-1,1)}.
		\end{equation*}
		Combining these results, we obtain
		\begin{equation*}
		        |\wph|_{H^1(-1,1)}^2 \le  2 \sqrt{3} \hn^{-1} |\wph|_{H^1(-1,1)}\|\wph\|_{L^2(-1,1)}
		\end{equation*}
		and further
		\begin{equation*}
		        |\wph|_{H^1(-1,1)} \le  2 \sqrt{3} \hn^{-1} \|\wph\|_{L^2(-1,1)}.
		\end{equation*}
		This shows \eqref{eq:inv2aa}, which concludes the proof.\qed
\end{proof}

\begin{remark}
		Neither Theorem~3.91 in~\cite{schwab:1998}, nor any of the arguments in the proof
		of Theorem~\ref{thrm:inverse} requires the grid to be equidistant. So, also having a
		general grid, estimate 
		\begin{equation}\nonumber
				|\uph|_{H^1(\OmegaAB)} \le 2 \sqrt{3} \; h_{\min}^{-1} \|\uph\|_{L^2(\OmegaAB)}
		\end{equation}
		is satisfied for all splines $\uph$ on $\OmegaAB=(a,b)$ with vanishing odd derivatives at
		the boundary. Here, as in any standard inverse inequality, $h_{\min}$ is the size of
		the \emph{smallest} element.
\end{remark}

As we have proven both an approximation error estimate and a corresponding inverse inequality,
both of them are sharp (up to constants independent of $p$ and $\hn$). First, we show that
there is a lower bound for the approximation error. As~\eqref{eq:corr:sharp1} is obviously
true for constant functions, we show that there also exist other functions satisfying this inequality.

\begin{corollary}\label{corr:sharp1}
    For all grid sizes $h$ and each $p\in\mathbb{N}$,
    there is a non-constant function $u\in H^1(\OmegaAB)$ such that
    \begin{equation}\label{eq:corr:sharp1}
			    	\inf_{\uph\in \tiSph(\OmegaAB)} \|u-\uph\|_{L^2(\OmegaAB)} \ge \frac{1}{4\sqrt{3}}\; \hn |u|_{H^1(\OmegaAB)}.
    \end{equation}
\end{corollary}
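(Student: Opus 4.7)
The plan is to exhibit a witness $u$ living on the refined grid of size $h/2$ that is $L^2$-orthogonal to $\tiSph(\OmegaAB)$, and then to invoke the inverse inequality of Theorem~\ref{thrm:inverse} on that finer grid. The key structural observation I would make first is that $\tiSph(\OmegaAB)\subsetneq \widetilde{S}_{p,h/2}(\OmegaAB)$: every spline on grid size $h$ is also a spline on grid size $h/2$, the boundary conditions in Definition~\ref{defi:Ssymm} do not depend on the grid, and the dimension count in Table~\ref{tab:dof} strictly increases under refinement. Picking any $v\in \widetilde{S}_{p,h/2}(\OmegaAB)\setminus \tiSph(\OmegaAB)$ and letting $\Pi_h$ denote the $L^2$-orthogonal projection onto $\tiSph(\OmegaAB)$, I would set $u:=v-\Pi_h v$, which is a nonzero element of $\widetilde{S}_{p,h/2}(\OmegaAB)$ that is $L^2$-orthogonal to $\tiSph(\OmegaAB)$.

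Since the constant functions belong to $\tiSph(\OmegaAB)$, this $L^2$-orthogonality forces $u$ to have zero mean, so $u$ is non-constant. The orthogonality also implies that the best $L^2$-approximation of $u$ from $\tiSph(\OmegaAB)$ is the zero function, yielding
\begin{equation*}
  \inf_{\uph\in \tiSph(\OmegaAB)} \|u-\uph\|_{L^2(\OmegaAB)} = \|u\|_{L^2(\OmegaAB)}.
\end{equation*}
Next, I would apply Theorem~\ref{thrm:inverse} to $u$ in the reduced spline space on the refined grid of size $h/2$, which gives
\begin{equation*}
  |u|_{H^1(\OmegaAB)} \le 2\sqrt{3}\,(h/2)^{-1}\|u\|_{L^2(\OmegaAB)} = \frac{4\sqrt{3}}{h}\|u\|_{L^2(\OmegaAB)}.
\end{equation*}
Rearranging and combining with the previous display produces precisely $\inf_{\uph}\|u-\uph\|_{L^2(\OmegaAB)}\ge \tfrac{1}{4\sqrt{3}}\,h\,|u|_{H^1(\OmegaAB)}$.

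I do not expect any genuine obstacle in this argument. The crucial realization is that the robust inverse inequality of Theorem~\ref{thrm:inverse} holds verbatim on any uniform grid, so halving $h$ doubles the inverse constant; this doubling is exactly what matches the claimed factor $\tfrac{1}{4\sqrt{3}}=\tfrac{1}{2\cdot 2\sqrt{3}}$ in the lower bound. The only minor points worth verifying are the strict nesting $\tiSph(\OmegaAB)\subsetneq \widetilde{S}_{p,h/2}(\OmegaAB)$, which is immediate from the dimension counts in Table~\ref{tab:dof}, and the fact that the grid-size condition used throughout the paper, $hp<|\OmegaAB|$, automatically transfers to the refined grid since $(h/2)p<hp<|\OmegaAB|$.
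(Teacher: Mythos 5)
Your proposal is correct and follows essentially the same route as the paper: pick a spline on the refined grid of size $h/2$ that is $L^2$-orthogonal to the coarse space, observe that the infimum is then attained at $\uph=0$, and apply the inverse inequality on the finer grid to get the factor $\tfrac{1}{4\sqrt{3}}=\tfrac{1}{2\cdot 2\sqrt{3}}$. If anything, your version is slightly more careful than the paper's, which takes $u\in S_{p,\frac{h}{2}}(\OmegaAB)$ and then invokes Theorem~\ref{thrm:inverse}, a statement proved only for the reduced space; your choice $u=v-\Pi_h v$ with $v\in\widetilde{S}_{p,\frac{h}{2}}(\OmegaAB)\setminus\tiSph(\OmegaAB)$ keeps the witness inside $\widetilde{S}_{p,\frac{h}{2}}(\OmegaAB)$, where that theorem applies verbatim.
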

\begin{proof}
		Let $u\in S_{p,\frac{\hn}{2}}(\OmegaAB)\backslash\{0\}$ be such that $(u,\tilde{u}_{p,\hn})_{L^2(\OmegaAB)}=0$ for
		all $\tilde{u}_{p,\hn}\in \Sph(\OmegaAB)$. As the constant functions are in $\Sph(\OmegaAB)$, this
		orthogonality implies that $u$ is non-constant.
		Using this orthogonality, we know that the infimum in~\eqref{eq:corr:sharp1} is taken for $\uph=0$.
		So, we obtain using Theorem~\ref{thrm:inverse} $\inf_{\uph\in \tiSph(\OmegaAB)} \|u-\uph\|_{L^2(\OmegaAB)}
		= \|u\|_{L^2(\OmegaAB)} \ge \frac{1}{2\sqrt{3}}\; \frac{\hn}{2} |u|_{H^1(\OmegaAB)}$,
		which finishes the proof.
\qed\end{proof}
Similarly, we can give an lower bound for the inverse inequality. Again, we show the existence of a non-trivial function.
\begin{corollary}\label{corr:sharp2}
    For all grid sizes $h$ with $2hp<|\OmegaAB|$ and each $p\in\mathbb{N}$,
    there is a non-constant function $\uph\in \tiSph(\OmegaAB)$ such that
    \begin{equation}\nonumber
			    |\uph|_{H^1(\OmegaAB)} \ge \frac{1}{2\sqrt{2}}\; \hn^{-1} \|\uph\|_{L^2(\OmegaAB)}.
    \end{equation}
\end{corollary}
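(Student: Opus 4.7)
The plan is to mirror the strategy of Corollary~\ref{corr:sharp1}: use an orthogonality argument to force the best approximation of a cleverly chosen spline to be zero, and then apply the previously established approximation error estimate in the opposite direction. The ingredients already in place are the nestedness $\tilde{S}_{p,2h}(\OmegaAB) \subseteq \tiSph(\OmegaAB)$ together with Corollary~\ref{cor:approx:nonper} applied on the coarser grid~$2h$, whose applicability is precisely guaranteed by the hypothesis $2hp < |\OmegaAB|$.

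First, I would select a non-zero $\uph \in \tiSph(\OmegaAB)$ that is $L^2$-orthogonal to the coarser space $\tilde{S}_{p,2h}(\OmegaAB)$, i.e.
\begin{equation*}
    (\uph,\tilde{u}_{p,2h})_{L^2(\OmegaAB)} = 0
    \quad\text{for all } \tilde{u}_{p,2h} \in \tilde{S}_{p,2h}(\OmegaAB).
\end{equation*}
Existence follows from a dimension count based on Table~\ref{tab:dof}: writing $n$ for the number of elements of the $h$-grid on $\OmegaAB$, one has $\dim \tiSph(\OmegaAB) - \dim \tilde{S}_{p,2h}(\OmegaAB) = n/2$ (whether $p$ is even or odd), which is strictly positive under the assumption on~$h$. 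Since the constant function~$1$ belongs to $\tilde{S}_{p,2h}(\OmegaAB)$, the orthogonality relation forces $\int_{\OmegaAB} \uph(x)\,\dd x = 0$, so such a $\uph$ is automatically non-constant.

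Next, I would apply Corollary~\ref{cor:approx:nonper} at grid size~$2h$ to $\uph \in H^1(\OmegaAB)$, obtaining a spline $u_{p,2h} \in \tilde{S}_{p,2h}(\OmegaAB)$ with
\begin{equation*}
    \|\uph - u_{p,2h}\|_{L^2(\OmegaAB)} \le \sqrt{2}\,(2h)\,|\uph|_{H^1(\OmegaAB)} = 2\sqrt{2}\,h\,|\uph|_{H^1(\OmegaAB)}.
\end{equation*}
The orthogonality of $\uph$ to $\tilde{S}_{p,2h}(\OmegaAB)$ implies that zero is the $L^2$-best approximation of $\uph$ in that subspace, hence $\|\uph\|_{L^2(\OmegaAB)} \le \|\uph - u_{p,2h}\|_{L^2(\OmegaAB)}$. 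Combining the two inequalities gives
\begin{equation*}
    \|\uph\|_{L^2(\OmegaAB)} \le 2\sqrt{2}\,h\,|\uph|_{H^1(\OmegaAB)},
\end{equation*}
which is exactly the claimed bound after division by $2\sqrt{2}\,h$.

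The only subtle point is the existence step: one must verify that the $2h$-grid is a valid uniform grid (i.e.\ $|\OmegaAB|/(2h) \in \mathbb{N}$, implicit in invoking the $2h$-spline space) and that the dimension gap above is indeed positive. Both follow from the standing hypothesis on grid sizes together with $2hp < |\OmegaAB|$, so I do not expect any serious obstacle; the main ideas are entirely parallel to Corollary~\ref{corr:sharp1}, only with the roles of approximation error estimate and inverse inequality (and of the fine and coarse spaces) interchanged.
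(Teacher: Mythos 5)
Your proposal is correct and follows essentially the same route as the paper: pick a spline orthogonal to the coarse space $\widetilde{S}_{p,2h}(\OmegaAB)$ so that its best $L^2$-approximation there is zero, then apply the approximation error estimate at grid size $2h$ in reverse. If anything, your version is slightly more careful than the paper's, which selects $\uph\in\Sph(\OmegaAB)$ orthogonal to the full coarse space $S_{p,2h}(\OmegaAB)$ even though the statement asks for $\uph\in\tiSph(\OmegaAB)$; your dimension-count existence argument and the observation that orthogonality to constants forces non-constancy are exactly the right justifications.
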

\begin{proof}
		Let $\uph\in \Sph(\OmegaAB)\backslash\{0\}$ be such that $(\uph,\tilde{u}_{p,2\hn})_{L^2(\OmegaAB)}=0$ for
		all $\tilde{u}_{p,2\hn}\in S_{p,2\hn}(\OmegaAB)$. As the constant functions are in $S_{p,2\hn}(\OmegaAB)$,
		this orthogonality implies that $\uph$ is non-constant.
		Using this orthogonality and Theorem~\ref{thrm:approx}, we obtain
		$\|\uph\|_{L^2(\OmegaAB)} = \inf_{u_{p,2\hn}\in \widetilde{S}_{p,n-1}(\OmegaAB)} \|\uph-u_{p,2\hn}\|_{L^2(\OmegaAB)} \le
		\sqrt{2} (2\hn) |\uph|_{H^1(\OmegaAB)}$, which finishes the proof.
\qed\end{proof}

\section{An extension to higher Sobolev indices}\label{sec:sobolev}

We can easily lift the statement of Theorem~\ref{thrm:approx}
(and also Corollary~\ref{cor:approx:nonper}) up to higher Sobolev indices.

\begin{theorem}\label{thrm:approx:sob}
    For all grid sizes $h$, each $q \in \mathbb{N}$ and each $p\in \mathbb{N}$
		with $0< q\le p+1$ and with $h(p-q+1)<|\OmegaAB|$,
		there is for each $u\in H^q(\OmegaAB)$,
    a spline approximation $\uph\in \tiSph^{(q)}(\OmegaAB)$ such that
    \begin{equation*}
	    |u-\uph|_{H^{q-1}(\OmegaAB)} \le  \sqrt{2} \; \hn |u|_{H^q(\OmegaAB)},
    \end{equation*}
    where $\tiSph^{(q)}(\OmegaAB)$ is the space of all $\uph \in \Sph(\OmegaAB)$ that
    satisfy the following symmetry condition:
    \begin{equation*}
    				\frac{\partial^{2l+q}}{\partial x^{2l+q}} \uph(a)=\frac{\partial^{2l+q}}{\partial x^{2l+q}} \uph(b) = 0
						\mbox{ for all } l \in \mathbb{N}_0 \mbox{ with } 2l+q < p.
    \end{equation*}
\end{theorem}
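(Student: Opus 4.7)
My plan is to reduce the claim to Corollary~\ref{cor:approx:nonper} by differentiating $u$ a total of $q-1$ times. Specifically, I would set $v := \frac{\partial^{q-1}}{\partial x^{q-1}}u \in H^1(\OmegaAB)$ and $p' := p-q+1$. The assumption $h(p-q+1) < |\OmegaAB|$ is precisely the hypothesis $hp' < |\OmegaAB|$ needed to apply Corollary~\ref{cor:approx:nonper} at degree $p'$, which yields a spline $v_{p',h} \in \widetilde{S}_{p',h}(\OmegaAB)$ with
\[
\|v - v_{p',h}\|_{L^2(\OmegaAB)} \le \sqrt{2}\, h\, |v|_{H^1(\OmegaAB)} = \sqrt{2}\, h\, |u|_{H^q(\OmegaAB)}.
\]

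I would then define $u_{p,h}$ to be any $(q-1)$-fold antiderivative of $v_{p',h}$. Because $v_{p',h}$ is a $C^{p'-1}$ piecewise polynomial of degree $p'$, a short induction shows that each antiderivative is again of maximum smoothness, one degree higher; iterating $q-1$ times, $u_{p,h}$ is a $C^{p-1}$ piecewise polynomial of degree $p'+q-1 = p$, so $u_{p,h} \in \Sph(\OmegaAB)$. The $q-1$ constants of integration can be chosen freely, since neither the $H^{q-1}$-semi-norm of the error nor the boundary conditions defining $\tiSph^{(q)}(\OmegaAB)$ depend on them: the semi-norm only sees the top derivative $u_{p,h}^{(q-1)} = v_{p',h}$, and the boundary conditions only involve derivatives of order at least $q > q-1$.

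The key point to verify is the symmetry condition at the boundary. Using the identity $\frac{\partial^{j}}{\partial x^{j}} u_{p,h} = \frac{\partial^{j-q+1}}{\partial x^{j-q+1}} v_{p',h}$ for $j \ge q-1$, the requirement $\frac{\partial^{2l+q}}{\partial x^{2l+q}} u_{p,h}(a) = \frac{\partial^{2l+q}}{\partial x^{2l+q}} u_{p,h}(b) = 0$ for $2l+q < p$ translates, via the substitution $j=2l+q$, into
\[
\frac{\partial^{2l+1}}{\partial x^{2l+1}} v_{p',h}(a) = \frac{\partial^{2l+1}}{\partial x^{2l+1}} v_{p',h}(b) = 0 \quad \text{for } 2l+1 < p',
\]
which is exactly the defining condition of $\widetilde{S}_{p',h}(\OmegaAB)$ and therefore already holds by construction. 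Combining everything,
\[
|u - u_{p,h}|_{H^{q-1}(\OmegaAB)} = \|v - v_{p',h}\|_{L^2(\OmegaAB)} \le \sqrt{2}\, h\, |u|_{H^q(\OmegaAB)},
\]
as required.

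The main obstacle is really just bookkeeping: one has to match up the (possibly unfamiliar) symmetry class after the iterated integration, but the computation above shows it falls out cleanly. The only true edge case is $q = p+1$, where $p' = 0$ falls outside the range of Corollary~\ref{cor:approx:nonper}; there I would replace $v_{p',h}$ by the piecewise-constant $L^2$-projection of $v$, for which the element-wise Poincar\'e inequality gives $\|v - v_{0,h}\|_{L^2(\OmegaAB)} \le \pi^{-1} h\, |v|_{H^1(\OmegaAB)} \le \sqrt{2}\, h\, |v|_{H^1(\OmegaAB)}$, and the symmetry condition in $\tiSph^{(p+1)}(\OmegaAB)$ becomes vacuous, so the remainder of the argument goes through unchanged.
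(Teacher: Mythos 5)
Your proposal is correct and follows essentially the same route as the paper: the paper phrases it as an induction on $q$ (differentiate once, apply the previous case at degree $p-1$, integrate back via $\uph(x):=c+\int_0^x u_{p-1,h}(\xi)\,d\xi$, noting that integration raises both degree and smoothness and shifts the boundary conditions by one order), which unrolled is exactly your one-shot construction of reducing to $v=\partial^{q-1}u/\partial x^{q-1}$ at degree $p'=p-q+1$ and taking a $(q-1)$-fold antiderivative. The only cosmetic difference is at the degenerate case $p'=0$, which the paper covers by citing the classical piecewise-constant estimate $|u-u_{0,h}|_{L^2}\le h|u|_{H^1}$ from Schumaker, while you invoke the elementwise Poincar\'e inequality with constant $h/\pi$; both suffice.
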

\begin{proof}
		Let again $\OmegaAB=(0,1)$ without loss of generality. 
		The proof is done by induction. From Corollary~\ref{cor:approx:nonper}, we know the estimate for $q=1$
		(as $\tiSph^{(1)}(0,1)=\tiSph(0,1)$) and all $p> q-1=0$. For $q=1$ and $p=q-1=0$, the estimate
		is a well-known result, cf. \cite{Schumaker:1981}, Theorem~6.1,~(6.7), where (in our notation)
		$|u-u_{0,h}|_{L^2(0,1)} \le \hn |u|_{H^1(0,1)}$ has been shown.
		
		So, now we assume to know the estimate for some $q-1$ and show it for $q$.
		
		As $u\in H^q(0,1)$, we know that $u'\in H^{q-1}(0,1)$, so we can apply the induction hypothesis and
		obtain that there is some $u_{p-1,n}\in \widetilde{S}_{p-1,n}^{(q-1)}(0,1)$ with
		\begin{equation*}
	    |u'-u_{p-1,n}|_{H^{q-2}(0,1)} \le  \sqrt{2} \; \hn |u'|_{H^{q-1}(0,1)}.
    \end{equation*}
    Define 
    \begin{equation}\label{eq:thrm:approx:sob}
    		\uph(x):=c+\int_0^x u_{p-1,n}(\xi)d\xi.
    \end{equation}
    Note that $\uph\in \Sph(0,1)$ as integrating increases 
    both the polynomial degree and the differentiability by $1$, cf.~\cite{Schumaker:1981}, Theorem~5.16.
    After integrating, the boundary conditions on the $l$-th derivative
    become conditions on the $l+1$-st derivative, therefore we further have $\uph\in \tiSph^{(q)}(0,1)$.

    Therefore, we have
    \begin{equation*}
	    |u'-\uph'|_{H^{q-2}(0,1)} \le  \sqrt{2} \; \hn |u'|_{H^{q-1}(0,1)},
    \end{equation*}
    which is the same as
    \begin{equation*}
	    |u-\uph|_{H^{q-1}(0,1)} \le  \sqrt{2} \; \hn |u|_{H^{q}(0,1)}.
    \end{equation*}
	The bound on the grid size with respect to the degree, i.e. $h(p-q+1)<|\OmegaAB|$ is sufficient, as the degree of $\partial^{q-1}/\partial x^{q-1} u$ is equal to $p-q+1$. 
   	This finishes the proof.\qed
\end{proof}
\begin{remark}
	The integration constant (integration constants for $q>2$) in~\eqref{eq:thrm:approx:sob} can be used
	to guarantee that
	\begin{equation*}
		\int_{\OmegaAB} \frac{\partial^l}{\partial x^l}(u(x)-\uph(x)) \dd x= 0
	\end{equation*}
	for all $l\in\{0,1,\ldots,q-1\}$.
\end{remark}

For the spaces $\tiSph^{(q)}(\OmegaAB)$ there is again an inverse inequality.
\begin{theorem}\label{thrm:inverse:sob}
	For all grid sizes $h$, each $q\in \mathbb{N}$ and each $p\in \mathbb{N}$ with $0< q \le p+1$,
	\begin{equation}\label{eq:inv2:sob}
		|\uph|_{H^q(\OmegaAB)} \le 2 \sqrt{3} \hn^{-1} |\uph|_{H^{q-1}(\OmegaAB)}
	\end{equation}
	is satisfied for all $\uph\in \tiSph^{(q)}(\OmegaAB)$, where
	$\tiSph^{(q)}(\OmegaAB)$ is as defined in Theorem~\ref{thrm:approx:sob}.
\end{theorem}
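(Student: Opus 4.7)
\smallskip
\noindent\textbf{Proof plan.} The plan is to proceed by induction on the Sobolev index $q$, with the base case $q=1$ being exactly Theorem~\ref{thrm:inverse} (since by definition $\tiSph^{(1)}(\OmegaAB) = \tiSph(\OmegaAB)$ and $|\uph|_{H^0(\OmegaAB)} = \|\uph\|_{L^2(\OmegaAB)}$). For the inductive step, assume that the estimate is already established for the pair $(q-1, p-1)$ whenever $q-1 \le p$, and consider $\uph \in \tiSph^{(q)}(\OmegaAB)$ with $p \ge q-1$.

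The key idea is to pass to the derivative. Set $v_h := \uph'$. By the standard differentiation rule for splines of maximum smoothness (Theorem~5.9 in \cite{Schumaker:1981}), $v_h \in S_{p-1,h}(\OmegaAB)$. Moreover, the boundary conditions transform cleanly: the requirement defining $\widetilde{S}_{p-1,h}^{(q-1)}(\OmegaAB)$ is that $\partial^{2l+(q-1)}/\partial x^{2l+(q-1)}\, v_h$ vanishes at $a$ and $b$ for every $l \in \mathbb{N}_0$ with $2l+(q-1) < p-1$, which unwinds to $\partial^{2l+q}/\partial x^{2l+q}\,\uph$ vanishing at the endpoints for $2l+q < p$ — precisely the condition defining $\tiSph^{(q)}(\OmegaAB)$. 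Hence $v_h \in \widetilde{S}_{p-1,h}^{(q-1)}(\OmegaAB)$.

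Applying the induction hypothesis to $v_h$ with degree $p-1$ and index $q-1$ yields
\begin{equation*}
|v_h|_{H^{q-1}(\OmegaAB)} \le 2\sqrt{3}\, h^{-1} |v_h|_{H^{q-2}(\OmegaAB)}.
\end{equation*}
Since $|v_h|_{H^{r}(\OmegaAB)} = |\uph|_{H^{r+1}(\OmegaAB)}$ for every $r$, this is exactly the desired estimate \eqref{eq:inv2:sob}, completing the induction.

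\smallskip
\noindent\textbf{Where the work sits.} No genuine obstacle is anticipated: Theorem~\ref{thrm:inverse} already carries all the analytic weight through its own $p$-induction, and here the only thing to verify carefully is the bookkeeping of boundary conditions under differentiation, namely that the parity and range of the vanishing-derivative indices shift in the way that matches the definition of $\widetilde{S}_{p-1,h}^{(q-1)}(\OmegaAB)$. The reduction $2l+(q-1) < p-1 \Longleftrightarrow 2l+q<p$ is exactly what makes the induction close, and this is the one point that deserves explicit mention in the write-up.
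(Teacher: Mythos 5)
Your proposal is correct and is essentially the paper's own argument: the paper simply takes the $(q-1)$-th derivative in one step, notes $\frac{\partial^{q-1}}{\partial x^{q-1}}\uph\in \widetilde{S}_{p-q+1,h}^{(1)}(\OmegaAB)=\widetilde{S}_{p-q+1,h}(\OmegaAB)$ via Theorem~5.9 of \cite{Schumaker:1981}, and applies Theorem~\ref{thrm:inverse}, which is exactly what your induction on $q$ unwinds to. The boundary-condition bookkeeping $2l+(q-1)<p-1 \Leftrightarrow 2l+q<p$ that you flag is the right (and only) point needing care, and you have it right.
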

\begin{proof}
	First note that~\eqref{eq:inv2:sob} is equivalent to
	\begin{equation}\label{eq:inv2:sob:2}
		\left|\frac{\partial^{q-1}}{\partial x^{q-1}} \uph\right|_{H^1(\OmegaAB)} \le 2 \sqrt{3} \hn^{-1} 
				\left\|\frac{\partial^{q-1}}{\partial x^{q-1}} \uph\right\|_{L^2(\OmegaAB)}.
	\end{equation}
	As $\frac{\partial^{q-1}}{\partial x^{q-1}}\uph\in \widetilde{S}_{p-q+1,n}^{(1)}(\OmegaAB) = \widetilde{S}_{p-q+1,n}(\OmegaAB)$,
	cf. \cite{Schumaker:1981}, Theorem~5.9,
	the estimate~\eqref{eq:inv2:sob:2} follows directly from Theorem~\ref{thrm:inverse}.\qed
\end{proof}

Again, as we have both an approximation error estimate and an inverse inequality, we know that
both of them are sharp (cf. Corollaries~\ref{corr:sharp1} and~\ref{corr:sharp2}).

%Again, Lemma~\ref{lem:sharp} indicates that the results in the last two theorems are sharp.

The following theorem is directly obtained from telescoping.

\begin{theorem}\label{thrm:approx:sob:2}
		For all grid sizes $h$, each $q\in\mathbb{N}_0$, each $p\in\mathbb{N}$, each $r\in\mathbb{N}$
		with $0\le r\le q\le p+1$ and $h(p-r)<|\OmegaAB|$, there is for each $u\in H^q(\OmegaAB)$
    a spline approximation $\uph\in \Sph(\OmegaAB)$ such that
    \begin{equation*}
			    	|u-\uph|_{H^r(\OmegaAB)} \le (\sqrt{2}\; \hn)^{q-r} |u|_{H^q(\OmegaAB)}
    \end{equation*}
    is satisfied.
\end{theorem}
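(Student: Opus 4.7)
The plan is a straightforward induction on the gap $k := q - r \ge 0$, using Theorem~\ref{thrm:approx:sob} as the engine that drops the Sobolev index by one at each stage. For the base case $k = 0$ (so $q = r$), the trivial choice $\uph = 0 \in \Sph(\OmegaAB)$ turns the inequality into the equality $|u|_{H^r(\OmegaAB)} = (\sqrt{2}\,\hn)^0 |u|_{H^q(\OmegaAB)}$, which is all that is required.

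For the inductive step, assume the result for gap $k-1$ and take $u \in H^q(\OmegaAB)$ with $q = r + k \ge 1$. First I would check that Theorem~\ref{thrm:approx:sob} is applicable at level~$q$: the condition $0 < q \le p+1$ is immediate, while the grid-size condition $h(p - q + 1) < |\OmegaAB|$ follows from the hypothesis $h(p - r) < |\OmegaAB|$ together with $r \le q - 1$. The theorem then delivers a spline $u_h^{(1)} \in \tiSph^{(q)}(\OmegaAB) \subseteq \Sph(\OmegaAB)$ with
\[
|u - u_h^{(1)}|_{H^{q-1}(\OmegaAB)} \le \sqrt{2}\,\hn\,|u|_{H^q(\OmegaAB)}.
\]
Next I would apply the inductive hypothesis to the residual $w := u - u_h^{(1)} \in H^{q-1}(\OmegaAB)$ with new indices $(q', r) = (q-1, r)$ and gap $k - 1$; the hypotheses on $r$, $p$ and $h$ carry over verbatim. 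This yields $w_h \in \Sph(\OmegaAB)$ with $|w - w_h|_{H^r(\OmegaAB)} \le (\sqrt{2}\,\hn)^{k-1} |w|_{H^{q-1}(\OmegaAB)}$. Setting $\uph := u_h^{(1)} + w_h \in \Sph(\OmegaAB)$ (a linear space) and chaining the two bounds gives $|u - \uph|_{H^r(\OmegaAB)} \le (\sqrt{2}\,\hn)^k |u|_{H^q(\OmegaAB)}$, closing the induction.

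I expect no real obstacle. Theorem~\ref{thrm:approx:sob} is engineered to contribute exactly one factor $\sqrt{2}\,\hn$ per decrement of the Sobolev index, and the residual after each step sits naturally in the Sobolev space of order one less, so the induction hypothesis plugs in cleanly. The only bit of bookkeeping is verifying that the single grid-size condition $h(p - r) < |\OmegaAB|$, phrased in terms of the smallest target index $r$, licences every intermediate invocation of Theorem~\ref{thrm:approx:sob} at the levels $j = q, q-1, \ldots, r+1$; this is automatic because each such level only requires $h(p - j + 1) < |\OmegaAB|$ and one has $p - j + 1 \le p - r$ throughout.
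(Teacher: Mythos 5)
Your proof is correct and follows essentially the same route as the paper: both chain single applications of Theorem~\ref{thrm:approx:sob} by induction, approximating a residual at each stage and verifying that $h(p-r)<|\OmegaAB|$ covers the most restrictive intermediate grid-size condition. The only (cosmetic) difference is that you induct on the gap $q-r$ and peel off the top Sobolev index first, whereas the paper inducts downward on $r$ and peels off the bottom one; you also handle the trivial case $r=q$ explicitly, which the paper leaves implicit.
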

\begin{proof}
	Theorem~\ref{thrm:approx:sob} states the desired result for $r=q-1$. For $r<q-1$, the
	statement is shown by induction in $r$. So, we assume to know
	the desired result for some $r$, i.e., there is a spline approximation $\wph\in \Sph(\OmegaAB)$ such that
    \begin{equation}\label{eq:thrm:0:ia}
				|u-\wph|_{H^r(\OmegaAB)} \le (\sqrt{2}\; \hn)^{q-r} |u|_{H^q(\OmegaAB)}.
    \end{equation}
    Now, we show that there is some $\uph\in \Sph(\OmegaAB)$ such that
    \begin{equation}\label{eq:thrm:0:ih}
				|u-\uph|_{H^{r-1}(\OmegaAB)} \le (\sqrt{2}\; \hn)^{q-(r-1)} |u|_{H^q(\OmegaAB)}.
    \end{equation}
    As $u-\wph\in H^r(\OmegaAB)$,
    Theorem~\ref{thrm:approx:sob} states that there is a function $\uph\in \Sph(0,1)$ such
    that
    \begin{equation*}
				|u-\uph|_{H^{r-1}(\OmegaAB)} \le \sqrt{2}\; \hn |u-\wph|_{H^r(\OmegaAB)},
    \end{equation*}
    which shows together with the induction assumption~\eqref{eq:thrm:0:ia} the induction
    hypothesis~\eqref{eq:thrm:0:ih}. Again, the bound on the grid size $h(p-r)<|\OmegaAB|$ follows directly from the bounds in Theorem~\ref{thrm:approx:sob}. \qed
\end{proof}

Here, it is not known to the authors how to choose a proper subspace of~$\Sph(\OmegaAB)$
such that a complementary inverse inequality can be shown.

\section{Extension to two and more dimensions and application in Isogeometric Analysis}\label{sec:dim}

Without loss of generality and to simplify the notation, we restrict ourselves to $\Omega:=(0,1)^d$ throughout this section. 
We can extend Theorem~\ref{thrm:approx} (and also Corollary~\ref{cor:approx:nonper}) to the following theorem for a tensor-product
structured grid on $\Omega$. 
Here, we can introduce $\widetilde{W}_{p,h}(\Omega) = \otimes_{l=1}^d \tiSph(0,1)$. Let $n=\nh$, for even $p$, and $n=\nh+1$ for odd $p$. 
Assuming that $(\bsplti^{(0)},\ldots , \bsplti^{(n-1)})$ is a basis of $\tiSph(0,1)$, the space
$\widetilde{W}_{p,h}(\Omega)$ is given by
\begin{equation*}
		\widetilde{W}_{p,h}(\Omega)=\left\{w\,:\,w(x_1,\ldots,x_d)=\hspace{-2mm}\sum_{i_1,\ldots,i_d=0}^{n-1}\hspace{-2mm} w_{i_1,\ldots,i_d} \bsplti^{(i_1)}(x_1) \cdots \bsplti^{(i_d)}(x_d) \right\}.
\end{equation*}
\begin{theorem}\label{eq:approx2d}
%		Let $\Omega=(0,1)^d$.
    For all $u\in H^1(\Omega)$, all grid sizes $h$ and each $p\in\mathbb{N}_0$, with $hp<1$, 
    there is a spline approximation $\wph\in \widetilde{W}_{p,n}(\Omega)$ such that
    \begin{equation*}
			    	\|u-\wph\|_{L^2(\Omega)} \le \sqrt{2d}\; \hn |u|_{H^1(\Omega)}
    \end{equation*}
    is satisfied.
\end{theorem}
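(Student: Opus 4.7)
The plan is to reduce to the one-dimensional estimate from Corollary~\ref{cor:approx:nonper} by a standard tensor-product telescoping argument. For each $l\in\{1,\ldots,d\}$, I define $\Pi_l: L^2(\Omega) \to L^2(\Omega)$ as the operator that, with the coordinates $x_j$ for $j\neq l$ held as parameters, applies the $L^2(0,1)$-orthogonal projection onto $\tiSph(0,1)$ in the variable $x_l$. A Fubini calculation shows that each $\Pi_l$ is a contraction on $L^2(\Omega)$, and operators $\Pi_l, \Pi_j$ with $l\neq j$ commute because they act on independent coordinates. Moreover, the composition $\Pi_1\cdots\Pi_d$ is the $L^2(\Omega)$-orthogonal projection onto the intersection of the ranges, which is precisely $\widetilde{W}_{p,h}(\Omega)$. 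I therefore set $\wph := \Pi_1\Pi_2\cdots\Pi_d u$ and observe that $\wph \in \widetilde{W}_{p,h}(\Omega)$ by construction.

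The key one-variable ingredient is the slice-wise estimate
\begin{equation*}
\|(I - \Pi_l) v\|_{L^2(\Omega)} \le \sqrt{2}\, h \,\|\partial_l v\|_{L^2(\Omega)}
\quad \text{for all } v \in H^1(\Omega).
\end{equation*}
To obtain this, I slice in the $l$-th coordinate: for almost every value of the remaining coordinates, $v$ is $H^1$ as a function of $x_l$, so Corollary~\ref{cor:approx:nonper} furnishes a spline approximation with $L^2(0,1)$-error bounded by $\sqrt{2}\,h\,|v(\cdot)|_{H^1(0,1)}$. Since $\Pi_l$ acts slice-wise as the $L^2$-orthogonal projection, it delivers an error no larger than this (minimality of the best $L^2$-approximation in its range), and squaring and integrating over the remaining variables gives the displayed bound.

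With these preparations, I write the telescoping identity
\begin{equation*}
u - \wph \;=\; \sum_{l=1}^d \Pi_1\cdots\Pi_{l-1}(I-\Pi_l)u,
\end{equation*}
apply the triangle inequality in $L^2(\Omega)$, and use the $L^2$-contractivity of $\Pi_1,\ldots,\Pi_{l-1}$ to discard these prefixes. Then invoking the slice-wise estimate on each summand yields
\begin{equation*}
\|u - \wph\|_{L^2(\Omega)} \;\le\; \sum_{l=1}^d \sqrt{2}\, h \,\|\partial_l u\|_{L^2(\Omega)},
\end{equation*}
and a single application of the Cauchy--Schwarz inequality to this sum of $d$ nonnegative terms produces the claimed constant $\sqrt{2d}$.

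I do not anticipate any genuine obstacle: the argument is essentially bookkeeping once one chooses the $L^2$-orthogonal projections (rather than the $\Ho$-projections used in the proof of Theorem~\ref{thrm:approx}), so that the projections are automatically $L^2$-contractive. The only subtlety worth highlighting is that switching from $\widehat{\Pi}_{p,h}$ to the $L^2$-projection does not worsen the univariate constant $\sqrt{2}$, which follows from the variational characterization of the $L^2$-projection as the best $L^2$-approximation in its range.
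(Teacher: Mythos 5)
Your proof is correct and follows essentially the same route as the paper's: a direction-by-direction telescoping decomposition, the univariate estimate of Corollary~\ref{cor:approx:nonper} applied slice-wise through the $L^2$-orthogonal projection (whose minimality preserves the constant $\sqrt{2}$), $L^2$-contractivity to discard the remaining factors, and Cauchy--Schwarz to produce the $\sqrt{2d}$. Your operator-level formulation is a somewhat cleaner packaging --- it applies each slice estimate directly to $u$ rather than to the coefficient functions of the intermediate projection as the paper does --- but the decomposition and all key ingredients coincide.
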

The proof is similar to the proof in~\cite{Beirao:2012}, Section 4, for the two dimensional case. To
keep the paper self-contained we give a proof of this theorem.
\begin{proof}{\em of Theorem~\ref{eq:approx2d}}
		For sake of simplicity, we restrict ourselves to $d=2$. The extension to more dimensions
		is completely analogous. Here
		\begin{equation*}
				\widetilde{W}_{p,h}(\Omega)=\tiSph(0,1)\otimes\tiSph(0,1)=\left\{w\;:\;w(x,y)=\sum_{i,j=0}^{n-1} w_{i,j} \bsplti^{(i)}(x) \bsplti^{(j)}(y) \right\}.
		\end{equation*}

		We assume $u\in C^\infty(\Omega)$ and show the desired result using a standard
		density argument.
		Using Corollary~\ref{cor:approx:nonper}, we can introduce for each $x\in(0,1)$ a function
		$v(x,\cdot)\in \tiSph(0,1)$ with
		\begin{equation*}
				\|u(x,\cdot)-v(x,\cdot)\|_{L^2(0,1)} \le \sqrt{2}\; \hn |u(x,\cdot)|_{H^1(0,1)}.
		\end{equation*}
		By squaring and taking the integral over $x$, we obtain
		\begin{equation}\label{eq:2d:1}
				\|u-v\|_{L^2(\Omega)} \le \sqrt{2}\; \hn \left\|\frac{\partial}{\partial y} u\right\|_{L^2(\Omega)}.
		\end{equation}
		By choosing $v(x,\cdot)$ to be the $L^2$-orthogonal projection, we also have
		\begin{equation*}
				\|v(x,\cdot)\|_{L^2(0,1)} \le \|u(x,\cdot)\|_{L^2(0,1)}
		\end{equation*}
		for all $x\in(0,1)$ and consequently
		\begin{equation}\label{eq:2d:x}
				\left\|\frac{\partial}{\partial x}v(x,\cdot)\right\|_{L^2(0,1)} \le \left\|\frac{\partial}{\partial x}u(x,\cdot)\right\|_{L^2(0,1)}.
		\end{equation}
		As $v(x,\cdot)\in \tiSph(0,1)$, there are coefficients $v_j(x)$ such that
		\begin{equation*}
				v(x,y) = \sum_{j=0}^{n-1} v_j(x) \bsplti^{(j)}(y).
		\end{equation*}
		Using Corollary~\ref{cor:approx:nonper}, we can introduce for each $j\in\{0,\ldots,N\}$ a function
		$w_j\in \tiSph(0,1)$ with
		\begin{equation}\label{eq:2d:2a}
				\|v_j-w_j\|_{L^2(0,1)} \le \sqrt{2}\; \hn |v_j|_{H^1(0,1)}.
		\end{equation}
		Next, we introduce a function $w$ by defining
		\begin{equation*}
			w(x,y):=\sum_{j=0}^{n-1} w_{j}(x) \bsplti^{(j)}(y),
		\end{equation*}
		which is obviously a member of the space $\widetilde{W}_{p,n}(\Omega)$.
		By squaring \eqref{eq:2d:2a}, multiplying it with $\bsplti^{(j)}(y)^2$, summing over $j$ and
		taking the integral, we obtain
		\begin{equation}\nonumber
				\int_0^1 \sum_{j=0}^{n-1}\|v_j-w_j\|_{L^2(0,1)}^2\bsplti^{(j)}(y)^2\dd y
						 \le 2\; \hn^2  \int_0^1\sum_{j=0}^{n-1}|v_j|_{H^1(0,1)}^2 \bsplti^{(j)}(y)^2\dd y.
		\end{equation}		
		Using the definition of the norms, we obtain
		\begin{equation}\nonumber
				\int_0^1\int_0^1 \sum_{j=0}^{n-1} (v_j(x)-w_j(x))^2\bsplti^{(j)}(y)^2\dd x\dd y
						 \le 2\; \hn^2  \int_0^1\int_0^1\sum_{j=0}^{n-1} v_j'(x)^2 \bsplti^{(j)}(y)^2\dd x\dd y
		\end{equation}
		and further
		\begin{equation}\nonumber
				\|v-w\|_{L^2(\Omega)} \le \sqrt{2}\; \hn  \left\|\frac{\partial}{\partial x} v\right\|_{L^2(\Omega)}.
		\end{equation}	
		Using~\eqref{eq:2d:x}, we obtain
		\begin{equation}\label{eq:2d:2}
				\|v-w\|_{L^2(\Omega)} \le \sqrt{2}\; \hn \left\|\frac{\partial}{\partial y} u\right\|_{L^2(\Omega)}.
		\end{equation}		
		Using~\eqref{eq:2d:1} and~\eqref{eq:2d:2}, we obtain
		\begin{align}
					\|u-w\|_{L^2(\Omega)} &\le \|u-v\|_{L^2(\Omega)} + \|v-w\|_{L^2(\Omega)} \nonumber \\
						& \le \sqrt{2}\; \hn \left\|\frac{\partial}{\partial y} u\right\|_{L^2(\Omega)} +
						 \sqrt{2}\; \hn \left\|\frac{\partial}{\partial x} u\right\|_{L^2(\Omega)}\label{eq:anisotropic-estimate}\\
						& \le 2\; \hn |u|_{H^1(\Omega)},\nonumber
		\end{align}
		which finishes the proof.\qed
\end{proof}

The extension of Theorem~\ref{thrm:inverse} to two or more dimensions is rather easy.
\begin{theorem}
%	Consider $\Omega:=(0,1)^d$.
	For all grid sizes $h$ and each $p\in \mathbb{N}$, the inequality 
	\begin{equation}\nonumber
		|\uph|_{H^1(\Omega)} \le 2 \;\sqrt{3d} \;\hn^{-1}\;\|\uph\|_{L^2(\Omega)}
	\end{equation}
	is satisfied for all $\uph\in \widetilde{W}_{p,h}(\Omega)$.
\end{theorem}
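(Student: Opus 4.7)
The plan is to exploit the tensor-product structure of $\widetilde{W}_{p,h}(\Omega)$ and reduce the multi-dimensional inverse inequality to $d$ applications of the one-dimensional inverse inequality in Theorem~\ref{thrm:inverse}. Since
\begin{equation*}
		|\uph|_{H^1(\Omega)}^2 = \sum_{l=1}^d \left\|\frac{\partial}{\partial x_l}\uph\right\|_{L^2(\Omega)}^2,
\end{equation*}
it suffices to prove, for each $l\in\{1,\ldots,d\}$, the directional bound
\begin{equation*}
		\left\|\frac{\partial}{\partial x_l}\uph\right\|_{L^2(\Omega)}^2 \le 12\, h^{-2}\, \|\uph\|_{L^2(\Omega)}^2,
\end{equation*}
after which summing over $l$ and taking the square root yields the constant $2\sqrt{3d}$ at once.

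For a fixed index $l$, the strategy is a standard fiber-wise argument. For almost every choice of the remaining variables $\hat{x} := (x_1,\ldots,x_{l-1},x_{l+1},\ldots,x_d) \in (0,1)^{d-1}$, the univariate section
\begin{equation*}
		\xi \mapsto \uph(x_1,\ldots,x_{l-1},\xi,x_{l+1},\ldots,x_d)
\end{equation*}
is a linear combination of the basis functions $\bsplti^{(i_l)}$ with coefficients depending only on $\hat{x}$, and therefore lies in $\tiSph(0,1)$. Theorem~\ref{thrm:inverse} applied to this one-dimensional slice gives
\begin{equation*}
		\int_0^1 \left|\frac{\partial}{\partial x_l}\uph(\cdot,\hat x)\right|^2 \dd x_l \le 12\, h^{-2}\, \int_0^1 |\uph(\cdot,\hat x)|^2 \dd x_l.
\end{equation*}
Integrating this fiber-wise inequality over $\hat{x}\in(0,1)^{d-1}$ and invoking Fubini yields exactly the desired directional bound.

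Combining the $d$ directional bounds produces
\begin{equation*}
		|\uph|_{H^1(\Omega)}^2 \le 12 d\, h^{-2}\, \|\uph\|_{L^2(\Omega)}^2,
\end{equation*}
and the claim follows. I do not expect any serious obstacle: the only non-routine point is the verification that the univariate fibers indeed inherit membership in $\tiSph(0,1)$ (so that the vanishing odd-derivative boundary conditions are satisfied on the fiber), but this is immediate from the separable expansion $\uph(x_1,\ldots,x_d) = \sum_{i_1,\ldots,i_d} w_{i_1,\ldots,i_d} \bsplti^{(i_1)}(x_1)\cdots\bsplti^{(i_d)}(x_d)$, since $\bsplti^{(i_l)} \in \tiSph(0,1)$ by construction and the remaining tensor factors are simply scalars once $\hat{x}$ is fixed.
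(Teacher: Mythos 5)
Your proposal is correct and is essentially identical to the paper's own proof: both decompose the $H^1$ semi-norm into directional derivative terms, apply the one-dimensional inverse inequality of Theorem~\ref{thrm:inverse} fiber-wise in each coordinate direction, and integrate over the remaining variables to obtain the constant $12\,d\,h^{-2}$, i.e., $2\sqrt{3d}\,h^{-1}$. Your explicit remark that each fiber lies in $\tiSph(0,1)$ because of the separable expansion is a point the paper leaves implicit, but it is the same argument.
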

\begin{proof}
	For sake of simplicity, we restrict ourselves to $d=2$. The generalization to more
	dimensions is completely analogous.

	We have obviously
	\begin{align*}\nonumber
		|\uph|_{H^1(\Omega)}^2 &= \left\|\frac{\partial}{\partial x} \uph\right\|_{L^2(\Omega)}^2 + \left\|\frac{\partial}{\partial y} \uph\right\|_{L^2(\Omega)}^2\\
			& = \int_0^1 |\uph(\cdot,y)|_{H^1(0,1)}^2  \dd y
						+ \int_0^1 |\uph(x,\cdot)|_{H^1(0,1)}^2  \dd x
	\end{align*}
	This can be bounded from above using Theorem~\ref{thrm:inverse} via
	\begin{align*}\nonumber
		 |\uph|_{H^1(\Omega)}^2 \leq& 12 \hn^{-2} \left( \int_0^1 \|\uph(\cdot,y)\|_{L^2(0,1)}^2  \dd y
						+ \int_0^1 \|\uph(x,\cdot)\|_{L^2(0,1)}^2  \dd x\right) \\ =& 24 \hn^{-2} \|\uph\|_{L^2(\Omega)}^2,
	\end{align*}
	which finishes the proof.\qed
\end{proof}

The extension to isogeometric spaces can be done following the approach presented in \cite{Bazilevs:2006}, Section 3.3. 
In Isogeometric Analysis, we have a geometry parameterization $\mathbf{F}:(0,1)^d \rightarrow \OmegaPhys$. An isogeometric function on $\OmegaPhys$ 
is then given as the composition of a B-spline on $(0,1)^d$ with the inverse of $\mathbf{F}$. 
The following result can be shown using a standard chain rule argument.

There exists a constant $C=C(\mathbf{F},q)$ such that 
\begin{equation}\label{eq:norm:equivalence}
	C^{-1} \left\| f \right\|_{H^{q}(\OmegaPhys)} 
	\leq \left\| f \circ \mathbf{F} \right\|_{H^{q}((0,1)^d)}
	\leq C \left\| f \right\|_{H^{q}(\OmegaPhys)}
\end{equation}
for all $f\in H^{q}(\OmegaPhys)$.

See \cite{Bazilevs:2006}, Lemma 3.5, or \cite{Beirao:2012}, Corollary 5.1, for related results. In both papers the statements are slightly more general, \cite{Bazilevs:2006} gives a more detailed dependence on the parameterization $\mathbf{F}$ whereas \cite{Beirao:2012} establishes bounds for anisotropic grids. 
Obviously, an extension to anisotropic grids can be achieved directly using the estimate \eqref{eq:anisotropic-estimate}. Note that the degree and the grid size are not necessarily equal in each parameter direction.

Using this equivalence of norms, we can transfer all results from the parameter domain $(0,1)^d$ to the physical domain $\OmegaPhys$. However, we need to point out that this equivalence is not valid for seminorms. Hence, in Theorem \ref{thrm:approx} (and follow-up Theorems \ref{thrm:approx:sob}, \ref{thrm:approx:sob:2} and \ref{eq:approx2d}) the seminorms on the right hand side of the equations need to be replaced by the full norms. Moreover, the bounds depend on the geometry parameterization via the constant $C$ in \eqref{eq:norm:equivalence}.

A similar strategy can be followed when extending the results to NURBS. We do not go into the details here but refer to \cite{Bazilevs:2006,Beirao:2012} for a more detailed study. In the case of NURBS the seminorms again have to be replaced by the full norms due to the quotient rule of differentiation. In that case the constants of the bounds additionally depend on the given denominator of the NURBS parameterization.

\section*{Acknowledgements} The authors want to thank Walter Zulehner for his suggestions, which helped to improve the
presentation of the results in this paper.

\section*{Appendix}
At this point, we want to give a basis for $\tiSph(\OmegaAB)$ to make the reader more familiar with
that space and to demonstrate that it is possible to work with it. The basis, which we introduce,
is directly related to the (scaled) cardinal B-splines $\{\bspl^{(i)}\}^{\nh-1}_{i=-p}$.

\begin{lemma}\label{lem:basis-tilde}
The set 
$\{ \bsplti^{(i)} \}^{}_{i=-\left\lceil\frac p2\right\rceil , \ldots, \nh-\left\lfloor\frac p2\right\rfloor-1}$
with 
\begin{equation}\label{eq:basis-tilde}
	\bsplti^{(i)} := \sum_{l \in \{-i-p-1,i,2\nh -i -p -1\}} \bspl^{(l)}
\end{equation}
is a basis of $\tiSph(\OmegaAB)$.
\end{lemma}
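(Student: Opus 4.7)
The plan is to verify three properties: (a) each $\bsplti^{(i)}$ lies in $\tiSph(\OmegaAB)$; (b) the proposed functions are linearly independent; (c) their cardinality matches $\dim \tiSph(\OmegaAB)$. Together these force the set to be a basis.

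For (a) I will rely on the reflection symmetry of cardinal B-splines, $\psi_p^{(l)}(-t) = \psi_p^{(-l-p-1)}(t)$, which after scaling and shifting yields the identities $\bspl^{(l)}(2a-x)=\bspl^{(-l-p-1)}(x)$ and $\bspl^{(l)}(2b-x) = \bspl^{(2\nh-l-p-1)}(x)$ on $\mathbb{R}$. Viewing $\bsplti^{(i)}$ as a function on $\mathbb{R}$ built from its three summands, the assumption $hp<|\OmegaAB|$ forces (by a short support check) the summand $\bspl^{(2\nh-i-p-1)}$ to vanish in a neighbourhood of $a$, so there $\bsplti^{(i)}$ coincides with $\bspl^{(i)}+\bspl^{(-i-p-1)}$, which is invariant under $x\mapsto 2a-x$. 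Every odd derivative of such a function vanishes at $a$, so the vanishing conditions of Definition~\ref{defi:Ssymm} at $a$ are met; the mirror argument at $b$, discarding $\bspl^{(-i-p-1)}$ and using the second reflection, yields the vanishing conditions at $b$.

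For (b) I will classify the indices into three disjoint subranges: the interior $\{0,\ldots,\nh-p-1\}$, the left boundary $\{-\lceil p/2\rceil,\ldots,-1\}$, and the right boundary $\{\nh-p,\ldots,\nh-\lfloor p/2\rfloor-1\}$. A direct check, using $\nh>p$, shows that on $\OmegaAB$ each $\bsplti^{(i)}$ reduces to $\bspl^{(i)}$ in the interior, to $\bspl^{(i)}+\bspl^{(-i-p-1)}$ on the left, and to $\bspl^{(i)}+\bspl^{(2\nh-i-p-1)}$ on the right, while the other summands have support disjoint from $\OmegaAB$ and drop out. The ``primary'' indices $i$ and the ``reflected'' indices $-i-p-1$ and $2\nh-i-p-1$ populated by these three cases form three pairwise disjoint subsets of $\{-p,\ldots,\nh-1\}$ and each cardinal B-spline $\bspl^{(l)}$ appears in at most one $\bsplti^{(i)}$. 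Linear independence of $\{\bspl^{(l)}\}_{l=-p}^{\nh-1}$ on $\OmegaAB$ then turns any relation $\sum_i c_i \bsplti^{(i)}=0$ into the termwise vanishing of all coefficients.

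For (c) the index range has cardinality $\nh + \lceil p/2\rceil-\lfloor p/2\rfloor$, which equals $\nh$ when $p$ is even and $\nh+1$ when $p$ is odd, matching Table~\ref{tab:dof}. Since $\dim\Sph(\OmegaAB)=\nh+p$ and Definition~\ref{defi:Ssymm} imposes $2\lfloor p/2\rfloor$ vanishing conditions on odd derivatives, a short check (using test polynomials at each endpoint and locality of B-splines for independence of the two endpoints) shows these conditions to be linearly independent, giving the matching upper bound $\dim\tiSph(\OmegaAB)=\nh+p-2\lfloor p/2\rfloor$. Combining (a)--(c) concludes that $\{\bsplti^{(i)}\}$ is a basis. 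The main obstacle I expect is the bookkeeping in (b): carefully ranging over the three index bands and confirming that no cardinal B-spline is shared between two distinct $\bsplti^{(i)}$; the exceptional odd-$p$ case where the reflection fixes $i=-(p+1)/2$ (and its right-boundary analogue) simply degenerates the set $\{-i-p-1,i,2\nh-i-p-1\}$ to two elements and leaves the argument intact.
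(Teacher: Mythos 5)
Your proof is correct, but it completes the argument by a different route than the paper. For membership and linear independence you and the paper exploit the same underlying facts --- the reflection symmetry of uniform B-splines about the two endpoints, and the observation that on $\OmegaAB$ each $\bsplti^{(i)}$ reduces to $\bspl^{(i)}$, to $\bspl^{(i)}+\bspl^{(-i-p-1)}$, or to $\bspl^{(i)}+\bspl^{(2\nh-i-p-1)}$, so that the participating indices partition $\{-p,\ldots,\nh-1\}$ --- though the paper packages the symmetry by forming the symmetrized periodic splines $s_j(x)=\bsplper^{(j)}(x)+\bsplper^{(j)}(-x)$ on $(-1,1)$ and restricting to $(0,1)$, while you reflect directly on $\OmegaAB$. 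The genuine divergence is in completeness: the paper proves spanning constructively, extending an arbitrary $\uph\in\tiSph(0,1)$ to the symmetric periodic spline $\wph(x)=\uph(|x|)\in\Sphper(-1,1)$, expanding in the periodic basis and symmetrizing, whereas you count dimensions, matching the $\nh+\lceil p/2\rceil-\lfloor p/2\rfloor$ independent functions against $\dim\Sph(\OmegaAB)-2\lfloor p/2\rfloor$. Each route leans on one auxiliary fact the other avoids: the paper needs that the even reflection of any element of $\tiSph(0,1)$ lands in $\Sphper(-1,1)$ (which it only asserts), while you need the $2\lfloor p/2\rfloor$ odd-derivative functionals to be linearly independent on $\Sph(\OmegaAB)$. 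That independence is the one step you leave as a ``short check''; it does hold --- for $\nh>p$ the splines $\bspl^{(-p)},\ldots,\bspl^{(-1)}$ vanish identically near $b$, and their restrictions to the first element span a complement of $(x-a)^p$ in $\mathbb{P}^p$, so the evaluations $u\mapsto u^{(j)}(a)$ for $j<p$ are already independent on their span, and symmetrically at $b$ --- but it should be written out, since it is exactly the point at which the dimension count could silently fail. Your route has the mild advantage of producing the entries of Table~\ref{tab:dof} as a byproduct rather than using them as an input; the paper's spanning argument is longer but exhibits the expansion coefficients of an arbitrary element explicitly.
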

Before we prove Lemma~\ref{lem:basis-tilde} we give a more practical representation of the basis functions
by removing all contributions vanishing in $\Omega$. We obtain for odd $p$ that 
\begin{align*}
		&\bsplti^{(i)} = \bspl^{(i)}  &&   i = -(p+1)/2\\
		&\bsplti^{(i)} = \bspl^{(i)} + \bspl^{(-i-p-1)}  &&   i = -(p-1)/2,\ldots,-1 \\
		&\bsplti^{(i)} = \bspl^{(i)} && i=0,\ldots,\nh -p \\
		&\bsplti^{(i)} = \bspl^{(i)} + \bspl^{(2\nh-i-p-1)} && i=\nh-p+1,\ldots, \nh-(p+1)/2 \\
		&\bsplti^{(i)} = \bspl^{(i)} && i=\nh-(p-1)/2
\end{align*}
and for even $p$ that
\begin{align*}
		&\bsplti^{(i)} = \bspl^{(i)} + \bspl^{(-i-p-1)} &&   i = -p/2,\ldots,-1 \\
		&\bsplti^{(i)} = \bspl^{(i)} && i=0,\ldots,\nh -p-1 \\
		&\bsplti^{(i)} = \bspl^{(i)} + \bspl^{(2\nh-i-p-1)}  && i=\nh-p,\ldots, \nh-p/2-1. 
\end{align*}
Note that here we need that $0 \leq \nh-p-1$, which is equivalent to $hp<1$.
\begin{proof}{\em of Lemma~\ref{lem:basis-tilde}}
		For the sake of simplicity, we consider the case $\OmegaAB=(0,1)$ only. 
		We show first that
		every function in~\eqref{eq:basis-tilde} is in $\tiSph(0,1)$. Note that we have constructed
		$\tiSph(0,1)$ such that the restriction of any symmetric function in $\Sphper(-1,1)$
		to $(0,1)$ is a member of $\tiSph(0,1)$. Let $n=1/h$. So, consider the functions 
		$\{\bsplper^{(j)}\}_{j=-n}^{n-1}$,
		forming a basis for $\Sphper(-1,1)$. Here we consider a different indexing with $j=i-n$. Defining 
		$$s_{j}(x) := \bsplper^{(j)}(x)+\bsplper^{(j)}(-x)= \bsplper^{(j)}(x)+\bsplper^{(-j-p-1)}(x),$$
		for $j=-n,\ldots,n-1$, we obtain symmetric functions in $\Sphper(-1,1)$. Using the relation 
		$$\bsplper^{(j)}|_{(0,1)} = \sum_{k\in\mathbb{Z}} \bspl^{(j+2nk)},$$
		we obtain that the restriction of $s_j$ to $(0,1)$ fulfills 
		$$s_j|_{(0,1)} = \sum_{k\in\mathbb{Z}} \bspl^{(j+2nk)} + \sum_{k\in\mathbb{Z}} \bspl^{(-j-p-1+2nk)} 
		= \bspl^{(j)} + \sum_{k\in\mathbb{Z}} \bspl^{(-j-p-1+2nk)},$$
		which is 
\begin{align*}
		&s_j|_{(0,1)} = \bspl^{(j)} + \bspl^{(-j-p-1)}
		&&\mbox{ for } j\in \{ -n,\ldots,-1\},\\
		&s_j|_{(0,1)} = \bspl^{(j)}
		&&\mbox{ for } j\in \{ 0,\ldots,n-p-1\}, \mbox{ or}\\
		&s_j|_{(0,1)} = \bspl^{(j)} + \bspl^{(-j-p-1+2n)}
		&&\mbox{ for } j\in \{ n-p,\ldots,n-1\}. 
\end{align*}
		In all three cases $s_j$ equals $\bsplti^{(j)}$ or $2\bsplti^{(j)}$. 
		This shows that
		$\bsplti^{(i)}\in \tiSph(0,1)$.
		
		It is easy to see that the functions in~\eqref{eq:basis-tilde} are linear independent 
		for $i=-\left\lceil\frac p2\right\rceil , \ldots, n-\left\lfloor\frac p2\right\rfloor-1$. 
		So, it remains to show that every function $\uph \in \tiSph(0,1)$ can be expressed as a linear
		combination of the functions in~\eqref{eq:basis-tilde}. As we have already noticed, by construction
		the function $\uph$ can be extended to $(-1,1)$, by defining $\wph(x):=\uph(|x|)$. Note that
		$\wph \in \Sphper(-1,1)$. So, we can express it as a linear combination of basis functions
		of the basis given in~\eqref{eq:basis:varphi} via
		\begin{equation*}
				\wph = \sum_{j=-n}^{n-1} w_j \bsplper^{(j)}.
		\end{equation*}
		By construction, $\wph(x)=\wph(-x)$, so we obtain
		\begin{align*}
				\wph(x) & = \frac12 (\wph(x)+\wph(-x))
				= \frac12 \sum_{j=-n}^{n-1} w_j (\bsplper^{(j)}(x)+\bsplper^{(j)}(-x) ) \\
				& = \frac12 \sum_{j=-n}^{n-1} w_j (\bsplper^{(j)}(x)+\bsplper^{(-j-p-1)}(x) )\\
				& = \frac12 \sum_{j=-n}^{n-1}\sum_{k\in\mathbb{Z}} w_j (\bspl^{(j+2nk)}(x)+\bspl^{(-j-p-1+2nk)}(x) )\\
				& = \frac12 \sum_{j=-n}^{n-1}w_j (\bspl^{(-j-p-1)}(x)+\bspl^{(j)}(x)+\bspl^{(2n-j-p-1)}(x) ).
		\end{align*}
		Again, it can be checked easily, that for all $j,n\in\mathbb{Z}$ the term 
		$$
		\bspl^{(-j-p-1)}(x)+\bspl^{(j)}(x)+\bspl^{(2n-j-p-1)}(x)
		$$
		 is in the span of $\{ \bsplti^{(i)} \}^{}_{i=-\left\lceil\frac p2\right\rceil , \ldots, n-\left\lfloor\frac p2\right\rfloor-1}$, which concludes the proof.
%		By properly rearranging the sum, restricting to $(0,1)$ 
%		and removing the terms that vanish in $(0,1)$, we obtain
%		\begin{equation*}
%				\uph = \sum_{i=-\left\lceil\frac{p}{2}\right\rceil}^{n-\left\lfloor\frac{p}{2}\right\rfloor-1} w_{i+n} \bsplti^{(i)}
%		\end{equation*}
%		which finishes the proof.
\qed\end{proof}
We observe that the basis forms a partition of unity. Moreover, all basis functions are obviously
non-negative linear combinations of B-splines. Hence we call it a \emph{B-spline-like basis}.

Fig.~\ref{fig:p2} and~\ref{fig:p4} depict the B-spline basis functions that span $\tiSph(0,1)$. 
Here, the basis functions that have an influence at the boundary
are plotted with solid lines. The basis functions that have zero derivatives up to order $p-1$ 
at the boundary coincide with standard B-spline functions. They are plotted
with dashed lines.

If we compare the pictures of the B-spline basis functions in $\tiSph(0,1)$ (Fig.~\ref{fig:p2}
and~\ref{fig:p4}) with the standard B-spline basis functions for $\Sph(0,1)$ (Fig.~\ref{fig:p2a}
and~\ref{fig:p4a}) obtained from a classical open knot vector, we see that the latter ones
have more basis functions that are associated with the boundary. This can also be seen by counting
the number of degrees of freedom, cf. Table~\ref{tab:dof}.

\begin{figure}
	\begin{center}
		\includegraphics[scale=.55]{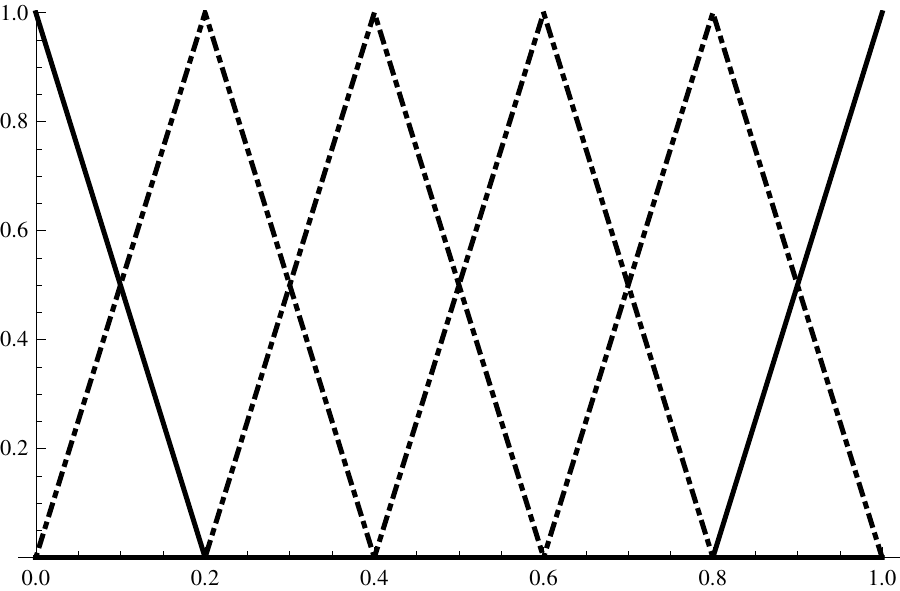}
		\includegraphics[scale=.55]{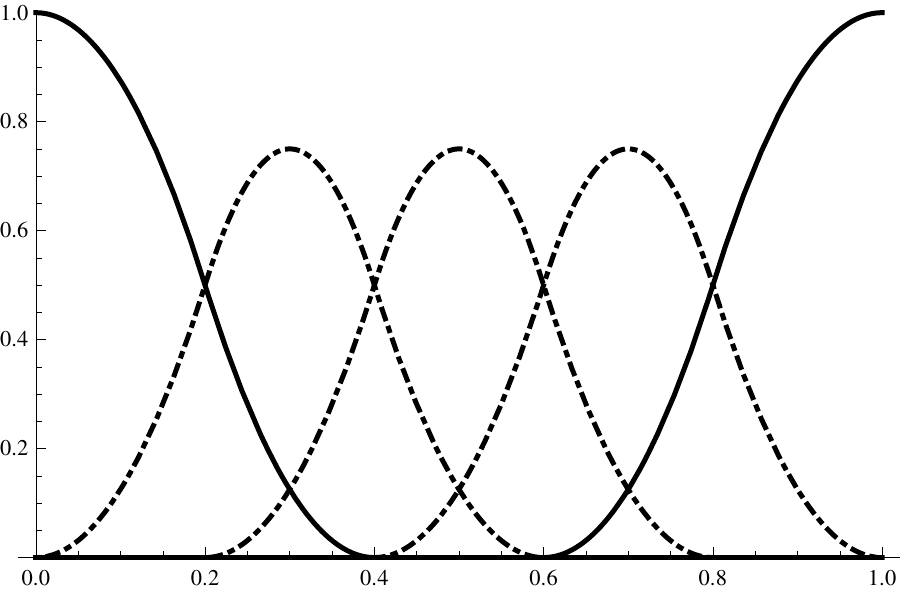}
		\caption{B-spline-like basis functions for $\tiShone(0,1)$ and $\tiShtwo(0,1)$}
		\label{fig:p2}
	\end{center}
%\end{figure}
%\begin{figure}
	\begin{center}
		\includegraphics[scale=.55]{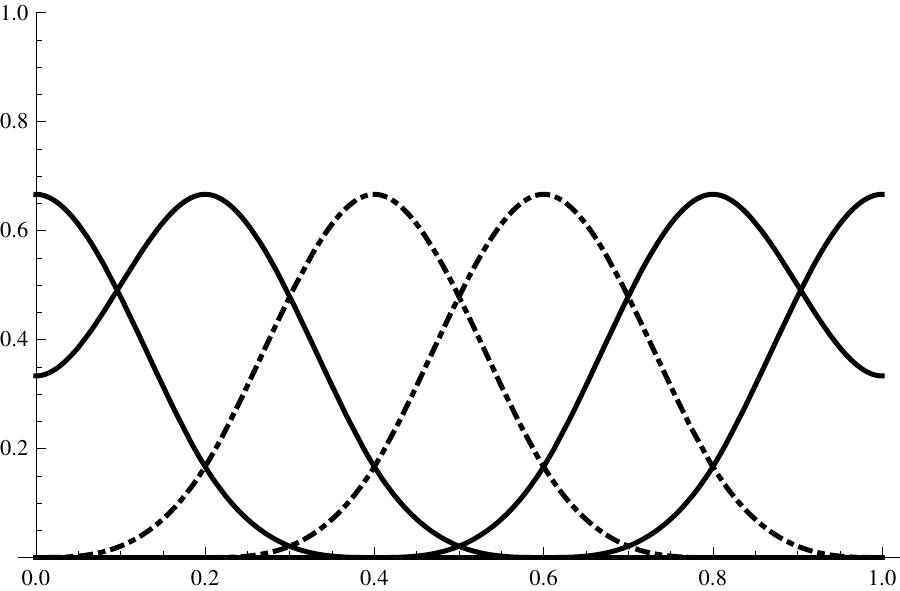}
		\includegraphics[scale=.55]{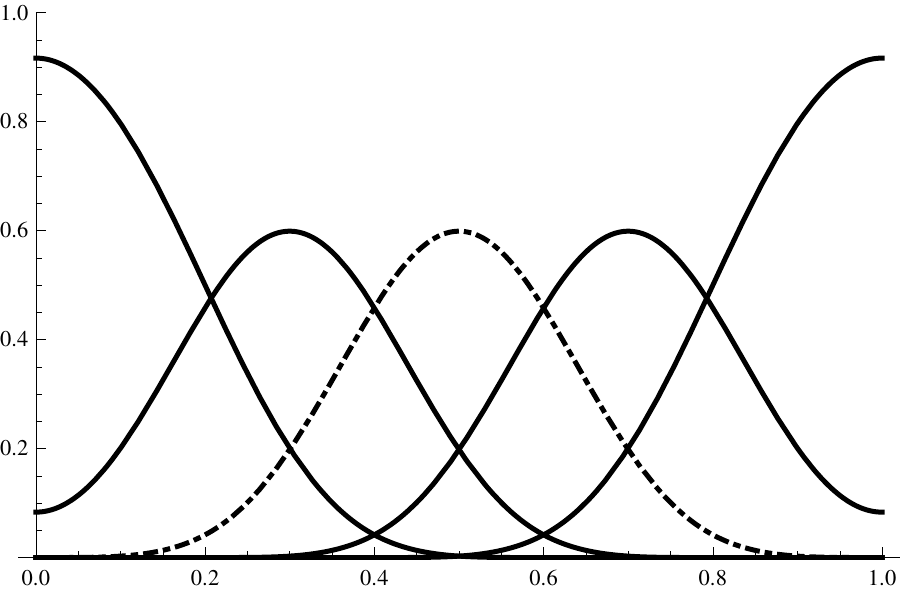}
		\caption{B-spline-like basis functions for $\tiShthree(0,1)$ and $\tiShfour(0,1)$}
		\label{fig:p4}
	\end{center}
\end{figure}
\begin{figure}
	\begin{center}
		\includegraphics[scale=.55]{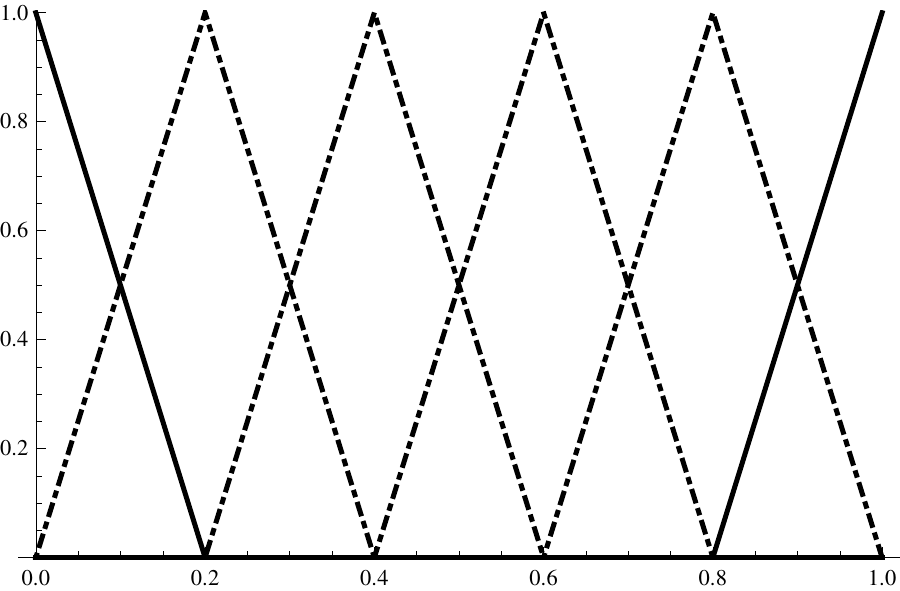}
		\includegraphics[scale=.55]{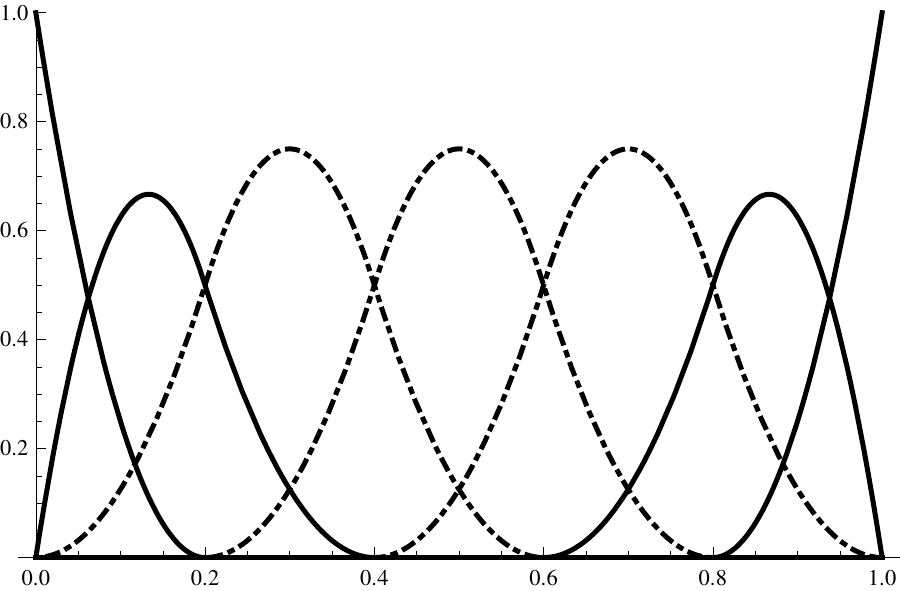}
		\caption{B-spline basis functions for $\Shone(0,1)$ and $\Shtwo(0,1)$}
		\label{fig:p2a}
	\end{center}
%\end{figure}
%\begin{figure}
	\begin{center}
		\includegraphics[scale=.55]{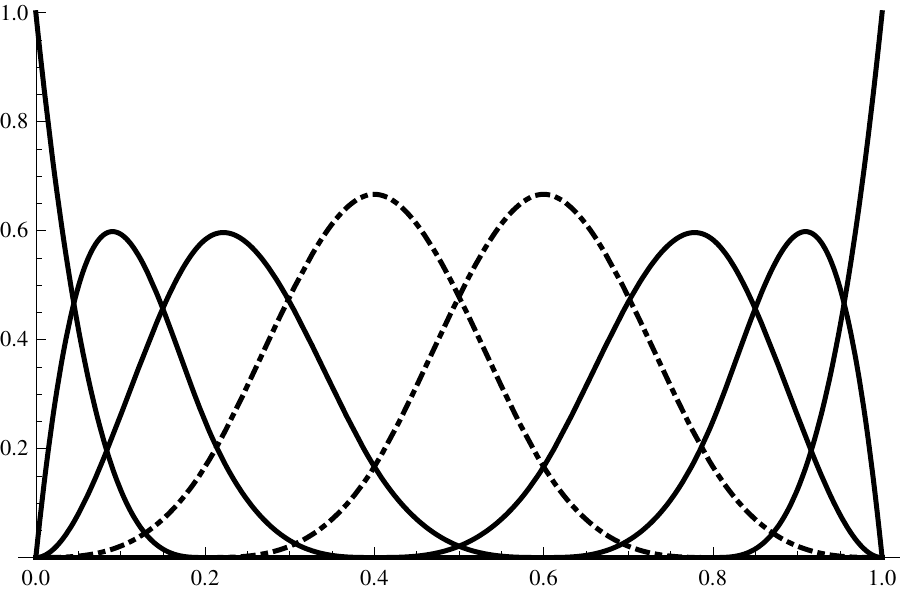}
		\includegraphics[scale=.55]{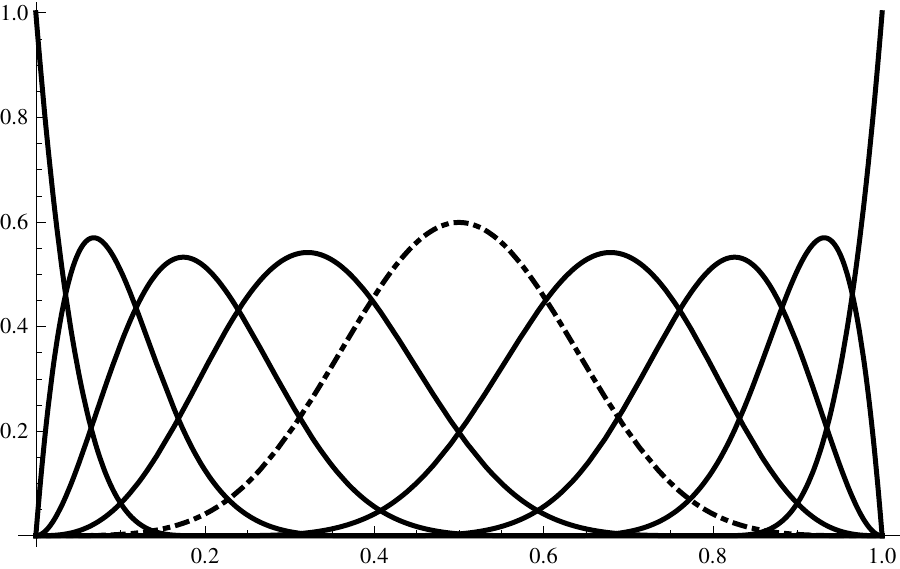}
		\caption{B-spline basis functions for $\Shthree(0,1)$ and $\Shfour(0,1)$}
		\label{fig:p4a}
	\end{center}
\end{figure}

\bibliographystyle{amsplain}
\bibliography{literature}

\end{document}